\newcommand\cyr{%
\renewcommand\rmdefault{wncyr}%
\renewcommand\sfdefault{wncyss}%
\renewcommand\encodingdefault{OT2}%
\normalfont
\selectfont}
\DeclareTextFontCommand{\textcyr}{\cyr}
\def\sssub{\@startsection{paragraph}{4}}
\renewcommand\paragraph{\@startsection{paragraph}{4}{\z@}{1.25ex}{0.0001pt}{\normalfont\normalsize\em}}
\numberwithin{paragraph}{subsubsection}
\newcommand{\ee}{\end{enumerate}}
\newcommand{\beq}{\begin{equation}}
\newcommand{\eeq}{\end{equation}}
\newcommand{\F}{{\mathbb{F}}}
\newcommand{\Z}{{\mathbb{Z}}}
\newcommand{\Stab}{\ensuremath{\mathrm{Stab}}}
\newcommand{\K}{\mathcal K}
\def\CH{\mathcal H}
\newtheorem{theorem}{Theorem}[section]
\newtheorem{theoremIntro}{Theorem}
\newtheorem{proposition}[theorem]{Proposition}
\newtheorem{lemma}[theorem]{Lemma}
\newtheorem{definition}[theorem]{Definition}
\theoremstyle{remark}
\newcommand{\bea}{\begin{eqnarray*}}
\newcommand{\eea}{\end{eqnarray*}}
\begin{document}

\title{Fra\"{i}ss\'{e} limits of limit groups}

\author{Olga Kharlampovich,  Alexei Myasnikov, Rizos Sklinos}

\address{Olga Kharlampovich, Department of Mathematics and Statistics, Hunter College, City University of New York, New York, NY, 10065, U.S.A.}
\email{okharlampovich@gmail.com}

\address{Alexei Myasnikov, Stevens Institute of Technology, Hoboken, NJ, 07030 U.S.A.}
\email{amiasnikov@gmail.com}

\address{Rizos Sklinos, Stevens Institute of Technology, Hoboken, NJ, 07030 U.S.A.}
\email{rizozs@gmail.com}

\begin{abstract} 
We modify the notion of a Fra\"{i}ss\'{e} class and show that various interesting classes of groups, notably the class of nonabelian limit groups and the class of 
finitely generated elementary free groups, admit Fra\"{i}ss\'{e} limits. 

Furthermore, we rediscover Lyndon's  $\Z[t]$-exponential completions of countable torsion-free CSA groups, as   
Fra\"{i}ss\'{e} limits with respect to extensions of centralizers. 

\vspace{.2cm}
Dedicated to the memory of Charles Sims.   
\end{abstract}

\maketitle

\tableofcontents

\section{Introduction}

Fra\"{i}ss\'{e} constructions have been introduced by Roland Fra\"{i}ss\'{e} in \cite{RF}, where he observed that one can see the class of finite linear orders as 
approximations of $(\mathbb{Q}, <)$ and extended this observation to more general structures satisfying certain properties 
(still in a finite relational language). He showed, in particular, how one can construct the ordering of the rational numbers as a direct limit of finite linear orders using amalgamations. 
Furthermore, his construction implies the countability, the universality and the homogeneity of the limit structure, as well 
as its uniqueness with respect to those properties. 
The idea of Fra\"{i}ss\'{e} has been proved extremely fruitful and it has been adapted and used to 
discover mathematical structures with certain universal and homogeneous properties. The applicability of Fra\"{i}ss\'{e}'s ideas in many different areas of mathematics 
demonstrate their power and usefulness. The {\em random graph}
in graph theory and {\em Philip Hall's universal locally finite group} in group theory are conspicuous examples amongst many.   

In this paper we study generalized Fra\"{i}ss\'{e} limits in the class  $\mathcal{F}$  of  all nonabelian  limit groups. The class $\mathcal{F}$, which coincides with the class of nonabelian finitely 
generated $\omega$-residually free or universally free groups, is nowadays recognized as the main study of the algebraic geometry or model theory of free  groups.  One cannot apply directly Fra\"{i}ss\'{e}'s methods to the class $\mathcal{F}$, 
put differently the class $\mathcal{F}$  is not a Fra\"{i}ss\'{e} class with respect to the standard embeddings of groups. However, it is more natural and more advantageous to consider limit groups as a category with $\forall$-embeddings 
(after all, limit groups are groups $\forall$-equivalent to a free non-abelian  group). In this case it is a "Fra\"{i}ss\'{e} class" (more precisely "Fra\"{i}ss\'{e}  category", 
but we stay with the old name here), so the Fra\"{i}ss\'{e} limit exists, it is unique up to an isomorphism, and have a nice universal property.  
Furthermore, going this way, one can naturally consider a subclass $\mathcal{F}_e \subset \mathcal{F}$ of all elementary free groups (those limit groups which are elementarily equivalent to a nonabelian free group) 
with respect to elementary embeddings. Again, this is a Fra\"{i}ss\'{e} class, so the limits exists, is unique, and homogeneous.  These uniquely defined  groups, say  $G$ and $G_e$ (the Fra\"{i}ss\'{e} limits 
of the classes $\mathcal{F}$ and $\mathcal{F}_e$) are very interesting objects in their own rights. Their algebraic structure is a mystery, which we did not attend to in this paper.

In more detail, the first main theorem of this paper is: 

\begin{theoremIntro}
The class $\mathcal{F}$  forms a $\forall$-Fra\"{i}ss\'{e} class. In particular there exists a countable group $G$ with the following properties:
\begin{itemize}
 \item the universal age of $G$, $\forall$-$age(G)$, is the class of nonabelian limit groups;
 \item for any two finitely generated isomorphic $\forall$-subgroups of $G$, there is an automorphism of $G$ extending the isomorphism;
  \item the group $G$ is the union of a $\forall$-chain of limit groups.
\end{itemize}
Moreover, any other countable group with the above properties is isomorphic to $G$.
\end{theoremIntro}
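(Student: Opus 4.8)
The plan is to recognize $(\mathcal{F}, \forall\text{-embeddings})$ as a $\forall$-Fra\"{i}ss\'{e} class, i.e.\ to verify the four adapted Fra\"{i}ss\'{e} axioms --- essential countability, the hereditary property for finitely generated $\forall$-subgroups (HP), the joint $\forall$-embedding property (JEP), and the $\forall$-amalgamation property (AP) --- and then to invoke the general construction of Fra\"{i}ss\'{e} limits in this modified setting. The four asserted properties of $G$ (that its $\forall$-age is exactly $\mathcal{F}$, that it is $\forall$-homogeneous, that it is a union of a $\forall$-chain of limit groups, and that it is unique with these features) are precisely what that general theorem delivers; and $G$ is a group rather than merely an abstract structure because it is produced as a directed union of groups along group monomorphisms.

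Essential countability and HP are quick. Limit groups are finitely generated, indeed finitely presented, so $\mathcal{F}$ has only countably many isomorphism types, and for fixed $A,B\in\mathcal{F}$ there are only countably many $\forall$-embeddings $A\to B$, each determined by the images of a finite generating tuple of $A$. For HP: a finitely generated $\forall$-subgroup $H$ of a limit group $L$ satisfies the same universal formulas --- in particular the same universal sentences --- as $L$, hence is universally equivalent to a nonabelian free group, hence a limit group; and since nonabelianity of $L$ is witnessed by the existential sentence $\exists x\,\exists y\,(xy\neq yx)$, it is reflected along the $\forall$-embedding, so $H$ is nonabelian and $H\in\mathcal{F}$. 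For JEP I would take, given $A,B\in\mathcal{F}$, the free product $A\ast B$: a free product of nonabelian limit groups is again a nonabelian limit group (free products preserve full residual freeness), and the two free-factor inclusions $A\to A\ast B$, $B\to A\ast B$ are $\forall$-embeddings. The latter is the only nontrivial point, and it follows by a discrimination argument: given a finite system of equations and inequations over $A$ with a solution in $A\ast B$, one produces a solution in $A$ by applying a suitable $A$-homomorphism $A\ast B\to A$ separating the finitely many relevant inequations, which exists since $A\ast B$ is fully residually free and $A$ contains free subgroups of every finite rank. I would isolate this as a lemma.

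The substantive step is AP. Given $\forall$-embeddings $f\colon C\to A$ and $g\colon C\to B$ in $\mathcal{F}$, one needs $D\in\mathcal{F}$ with $\forall$-embeddings $\alpha\colon A\to D$, $\beta\colon B\to D$ such that $\alpha f=\beta g$. Here the naive candidate $A\ast_{C}B$ is in general not a limit group, so the structure theory of limit groups must be brought in. One route is to show that, when the amalgamated subgroup is $\forall$-embedded in both factors, the amalgam --- or a controlled modification of it, in the spirit of the generalized doubles of Sela and Kharlampovich--Myasnikov --- remains fully residually free, by assembling discriminating homomorphisms $A\to\F$ and $B\to\F$ that agree on the image of $C$, the $\forall$-embedding hypothesis being exactly what synchronizes them. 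An alternative route is to work inside a universal ambient group: $\forall$-embed $A$ into Lyndon's group $\F^{\Z[t]}$ (which is universally equivalent to $\F$), use homogeneity of $\F^{\Z[t]}$ to extend the $\forall$-embedded copy of $f(C)$ there to a $\forall$-embedding of $B$, and set $D=\langle A,B\rangle\leq\F^{\Z[t]}$, which is finitely generated --- hence a limit group --- after checking the inclusions of $A$ and $B$ into it are $\forall$-embeddings.

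I expect AP to be the main obstacle, and I expect it to be exactly the point where the \emph{modified} notion of a Fra\"{i}ss\'{e} class earns its keep: strict amalgamation may have to be relaxed to a cofinal (``near''-)amalgamation, calibrated precisely so that the available closure properties of $\mathcal{F}$ suffice. Once essential countability, HP, JEP and AP are in hand, the general theory of $\forall$-Fra\"{i}ss\'{e} classes closes the argument and yields $G$ together with its stated universality, homogeneity, chain, and uniqueness properties.
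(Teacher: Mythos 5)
Your overall framework (essential countability, $\forall$-HP, $\forall$-JEP, $\forall$-AP, then the general $\forall$-Fra\"{i}ss\'{e} machinery) is exactly the paper's, and your treatments of countability, HP and JEP are essentially the ones used there: the paper also realizes JEP via the free product $L\ast M$ and proves $L\leq_{\forall}L\ast M$ by discrimination, via the characterization that $L\leq_{\forall}M$ iff every finite subset of $M$ is separated by a retraction onto $L$ (Lemma \ref{Retraction}), routed through embeddings into iterated centralizer extensions of a free group. One small caution on your JEP sketch: a retraction $A\ast B\to A$ sending $B$ into a free subgroup of $A$ is not automatically injective on a finite set of mixed normal forms; the injectivity is what the big powers argument (Lemma \ref{BigPowers}) is for, and it should be invoked explicitly.

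The genuine gap is in AP, which you rightly identify as the crux but do not close. Your first route fails as stated: $A\ast_{C}B$ need not be a limit group even when $C\leq_{\forall}A$ and $C\leq_{\forall}B$. For instance, with $C=\langle a,b\rangle$ free and $A=\langle C,s\mid [a,s]=1\rangle$, $B=\langle C,t\mid [a,t]=1\rangle$ (both centralizer extensions, hence $\forall$-extensions), the amalgam $A\ast_{C}B$ is not commutative transitive ($s$ and $t$ both commute with $a$ but not with each other), so it is not a limit group; the ``controlled modification'' is unavoidable and consists precisely in imposing relations such as $[s,t]=1$ identifying how the enlarged centralizers interact. Your second route is in fact the paper's strategy, but it is missing its two essential ingredients. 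First, to place $A$ and $B$ compatibly over $C$ inside a common ambient group one needs the \emph{relative} embedding theorem (Theorem \ref{CentralizerExtensionParameters}, from \cite{KhMyasn:2012}): $C\leq_{\forall}A$ implies $A$ embeds, \emph{fixing $C$ pointwise}, into a finite iterated extension of centralizers of $C$. Appealing to ``homogeneity of $\F^{\Z[t]}$'' does not substitute for this, since the homogeneity available there (Theorem \ref{th:Fraisse-ICE}) is only with respect to $ICE$-subgroups, not arbitrary $\forall$-embedded copies of $C$. Second, having formed $N_{mn}$ by interleaving the two chains of centralizer extensions and set $D=\langle A,B\rangle\leq N_{mn}$, the claims $A\leq_{\forall}D$ and $B\leq_{\forall}D$ are not a routine ``check'': they are the technical heart of Proposition \ref{forallAP}, proved by induction on the number of centralizer extensions using discriminating retractions $N_{mn}\to N_{m(n-1)}$ that send stable letters to large powers, again via Lemma \ref{BigPowers}. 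Finally, your closing hedge that strict amalgamation might need to be relaxed to a cofinal near-amalgamation is unnecessary: the paper establishes the strict $\forall$-AP (with $g_1\circ f_1=g_2\circ f_2$ exactly).
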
  

For the class of finitely generated elementarily free groups  we prove the following: 

\begin{theoremIntro}
The class $\mathcal{F}_e$ of elementary free groups forms an $e$-Fra\"{i}ss\'{e} class. In particular there exists a countable group $G_e$ with the following properties:
\begin{itemize}
 \item the elementary age of $G_e$, $e$-$age(G_e)$, is the class of elementary free groups;
 \item the group $G_e$ is homogeneous, i.e. whenever two tuples from $G_e$ have the same type there is an automorphism taking one to the other;
  \item the group $G_e$ is the union of an elementary chain of elementary free groups. 
\end{itemize}
Moreover, any other countable group with the above properties is isomorphic to $G_e$.
\end{theoremIntro}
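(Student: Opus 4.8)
The plan is to split the argument into a soft categorical part and a group-theoretic part. The soft part is a Fra\"{i}ss\'{e}-type theorem in the category whose objects are elementary free groups and whose morphisms are elementary embeddings: any essentially countable such class that is closed under passing to finitely generated $e$-subobjects and has the joint embedding and amalgamation properties admits a limit object, unique up to isomorphism, satisfying the three bulleted properties. This is the same chain-and-back-and-forth construction behind the previous theorem, now run with elementary embeddings in place of $\forall$-embeddings, so I would only recall it and devote the work to verifying the axioms for $\mathcal{F}_e$.

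Two axioms are routine. Essential countability of $\mathcal{F}_e$ follows from the fact that there are only countably many limit groups up to isomorphism --- they are finitely presented and recursively enumerable --- and $\mathcal{F}_e$ is a subclass. Closure under finitely generated elementary subgroups is immediate: if $A \preceq B$ with $B \equiv F$ and $A$ is finitely generated, then $A \equiv F$, so $A \in \mathcal{F}_e$. Granting the joint embedding property, one then reads off from the construction that $e$-$age(G_e)$ is exactly $\mathcal{F}_e$: the inclusion $\subseteq$ holds because $G_e$, being the union of an elementary chain of groups $\equiv F$, is itself $\equiv F$, and $\supseteq$ is arranged along the chain using essential countability and joint embedding. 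Homogeneity is then part of the abstract theorem, once one notes that two tuples of $G_e$ realize the same type precisely when the induced correspondence between the finitely generated subgroups they generate is a partial elementary isomorphism.

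The substance is the joint embedding and amalgamation properties \emph{for elementary embeddings}: ordinary model-theoretic elementary amalgamation yields an amalgam but not a finitely generated one, so the structure theory of elementary free groups is needed. I would invoke the characterization of finitely generated elementary free groups as the hyperbolic towers (equivalently, the regular NTQ groups) over a nonabelian free group, together with Sela's theorem that the base of a hyperbolic tower is elementarily embedded. Given $H_0 \preceq H_1$ and $H_0 \preceq H_2$ with $H_0, H_1, H_2 \in \mathcal{F}_e$, I would first reduce to the case where $H_1$ and $H_2$ are presented as hyperbolic towers over $H_0$ --- legitimate because $H_0$ is itself a tower over a free group $F$ and, by the analysis of elementary maps between limit groups, an elementary embedding between elementary free groups is realized by a tower embedding --- and then form the amalgamated product $K = H_1 \ast_{H_0} H_2$. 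The retraction $K \to H_1$ that is the identity on $H_1$ and is $H_2 \to H_0 \hookrightarrow H_1$ on $H_2$, carrying along the orbifold and abelian data pulled from the tower $H_2 / H_0$, exhibits $K$ as a hyperbolic tower over $H_1$; symmetrically $K$ is a hyperbolic tower over $H_2$; and stacking with the tower structures of $H_1$ and $H_2$ over $F$ exhibits $K$ as a hyperbolic tower over $F$. Hence $K \in \mathcal{F}_e$, while $H_1 \preceq K$ and $H_2 \preceq K$ by base-elementarity, the two embeddings agreeing on $H_0$ by construction. The joint embedding property is the special case $H_0 = F$: two elementary free groups, viewed as towers over $F_{n_1}$ and $F_{n_2}$, become towers over a common $F_N$ after adjoining free factors --- itself a hyperbolic tower move, hence an elementary embedding --- and one amalgamates over $F_N$.

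The step I expect to be the main obstacle is the verification that $K = H_1 \ast_{H_0} H_2$ is genuinely a hyperbolic tower: each floor of the tower $H_2$ over $H_0$ --- surface floor, centralizer-extension (abelian) floor, or free-rank floor --- must remain an honest floor after the base is enlarged from $H_0$ to $H_1$ along the elementary embedding, so that the retraction, the associated orbifold, and the relevant JSJ and centralizer data all survive the amalgamation with no centralizers unexpectedly fused. This is precisely where the hypothesis that the embeddings are \emph{elementary}, rather than merely $\forall$-embeddings or plain embeddings, does the work, as it controls the behaviour of centralizers and of definable subsets across the amalgam; the CSA property of limit groups is used here as well to rule out unwanted fusion of centralizers. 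A subsidiary point, which I expect to follow from the rigidity of the JSJ decompositions of limit groups, is that the tower presentations of $H_1$ and $H_2$ over $H_0$ can be chosen compatibly at the bottom floor.
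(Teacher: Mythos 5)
Your soft reduction, your verification of countability, $e$-HP and $e$-JEP, and your amalgam $K=H_1*_{H_0}H_2$ being a hyperbolic tower over each $H_i$ all match the paper (the last point is exactly Lemma \ref{AmalgamatedTowers}, and the reduction of an elementary embedding to a tower structure is Perin's Theorem \ref{ElemTower}). The genuine gap is in how you obtain homogeneity. The amalgamation property you verify is amalgamation over a common finitely generated \emph{elementary free subgroup} $H_0$ along elementary embeddings. Feeding that into the abstract Fra\"{i}ss\'{e} machine yields only \emph{weak} homogeneity of the limit: every isomorphism between finitely generated elementary subgroups of $G_e$ extends to an automorphism. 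The theorem asserts homogeneity for arbitrary tuples of the same type, and your bridging remark --- that two tuples have the same type precisely when the correspondence between the subgroups they generate is a partial elementary isomorphism --- is a restatement of ``same type'', not a reduction to the subgroup case: the subgroup generated by a tuple of $G_e$ is in general not an elementary subgroup and need not even be an elementary free group (a single nontrivial element generates $\Z$). So the ordinary $e$-AP does not suffice.

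What the theorem actually requires, and what the paper isolates as the \emph{strong} $e$-AP (Definition \ref{e-Fraisse}, Lemma \ref{strongeAP}), is amalgamation over \emph{partial elementary maps of a tuple}: given $f_i:\bar a\to H_i$ partial elementary, produce $D\in\mathcal F_e$ and elementary embeddings $g_i:H_i\to D$ with $g_1f_1(\bar a)=g_2f_2(\bar a)$. This is where the real content lies and it is absent from your outline. The paper's route: take the minimal hyperbolic subtower $T_i$ of $H_i$ containing $f_i(\bar a)$ (minimal elements exist by the descending chain condition for limit groups); prove that $T_1\cong T_2$ by an isomorphism matching the tuples (Proposition \ref{MinimalTowers}), which uses the Perin--Sklinos dichotomy --- equality of types forces an embedding unless the group has a hyperbolic floor structure relative to the tuple (Lemma \ref{retractionHypFloor}) --- together with the relative co-Hopf property (Lemma \ref{CoHopf}), plus a separate argument when the minimal subtower is cyclic; and only then amalgamate $H_1*_{T_1}H_2$ via Lemma \ref{AmalgamatedTowers}. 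Without this step the second bullet of the theorem (and hence also the uniqueness argument as stated, which runs the back-and-forth through partial elementary maps) is not established.
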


In addition, we prove that Lyndon's $\Z[t]$-exponential completion of a countable torsion-free CSA group is homogeneous with respect to a special class of subgroups, thus can be obtained as a 
Fra\"{i}ss\'{e} limit.

\section{Preliminaries}\label{Pre}

For the benefit of the reader we collect in this section a brief overview of the classical Fra\"{i}ss\'{e} theory as well as natural generalizations. The material in this 
section is well-known to model theorists (see \cite{DE} for example). Nevertheless, we record it.  

For the rest of the section we fix a countable first-order language $\mathcal{L}$.

\subsection{Fra\"{i}ss\'{e} limits}

Let $\mathcal{M}$ be an $\mathcal{L}$-structure. The age of $\mathcal{M}$, $age(\mathcal{M})$, is the class of all finitely generated 
substructures of $\mathcal{M}$.

\begin{definition}
Let $\mathcal{K}$ be a countable (with respect to isomorphism types) non-empty class of finitely generated $\mathcal{L}$-structures with the following properties:
\begin{itemize}
 \item (IP) the class $\mathcal{K}$ is closed under isomorphisms;
 \item (HP) the class $\mathcal{K}$ is closed under finitely generated substructures;
 \item (JEP) if $\mathcal{A}_1$, $\mathcal{A}_2$ are in $\mathcal{K}$, then there is $\mathcal{B}$ in $\mathcal{K}$ 
 and embeddings $f_i:\mathcal{A}_i\rightarrow \mathcal{B}$ for $i\leq 2$;
 \item (AP) if $\mathcal{A}_0$, $\mathcal{A}_1$, $\mathcal{A}_2$ are in $\mathcal{K}$ and $f_i:\mathcal{A}_0\rightarrow \mathcal{A}_i$ for $i\leq 2$ are embeddings,   
 then there is $\mathcal{B}$ in $\mathcal{K}$ and embeddings $g_i:\mathcal{A}_i\rightarrow \mathcal{B}$ for $i\leq 2$ with $g_1\circ f_1=g_2\circ f_2$.  
\end{itemize}
Then $\mathcal{K}$ is a Fra\"{i}ss\'{e} class.
\end{definition}

The classical example that motivated the above definition is the class of finite linear orders in $\mathcal{L}:=\{<\}$. If one is concerned with groups, in the language of groups, 
then it is not hard to see that finitely generated groups or even finitely presented groups do not form a Fra\"{i}ss\'{e} class. On the other hand,  
the class of finitely generated abelian groups (or even finitely generated free abelian groups) is a Fra\"{i}ss\'{e} class. It is more challenging to show that the class of finite groups is a Fra\"{i}ss\'{e} class. 

Fra\"{i}ss\'{e} classes are important for the following reason.

\begin{theorem}[Fra\"{i}ss\'{e}'s theorem]
Let $\mathcal{K}$ be a Fra\"{i}ss\'{e} class. Then there exists a countable $\mathcal{L}$-structure $\mathcal{M}$ such that:
\begin{itemize}
 \item the age of $\mathcal{M}$ is exactly $\mathcal{K}$; 
 \item the $\mathcal{L}$-structure $\mathcal{M}$ is ultrahomogeneous, i.e. every isomorphism between finitely generated substructures of $\mathcal{M}$ extends to 
 an automorphism of $\mathcal{M}$.
\end{itemize}
Moreover, any other countable $\mathcal{L}$-structure with the above properties is isomorphic to $\mathcal{M}$.
\end{theorem}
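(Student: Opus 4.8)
The plan is to reconstruct the classical Fra\"{i}ss\'{e} construction in two halves: first producing a structure with the right age and the ultrahomogeneity property, and then proving uniqueness. For existence, I would build $\mathcal{M}$ as the union of a countable chain $\mathcal{A}_0 \subseteq \mathcal{A}_1 \subseteq \cdots$ of members of $\mathcal{K}$, arranged so that every finite configuration of embeddings that needs to be amalgamated eventually is. Concretely, since $\mathcal{K}$ has only countably many isomorphism types and each member is finitely generated (hence has only countably many finitely generated substructures up to isomorphism, with only countably many embeddings between them), one can enumerate all "amalgamation tasks" $(\mathcal{A}, \mathcal{B}, f)$ where $\mathcal{A}$ is a finitely generated substructure of the current chain, $\mathcal{B} \in \mathcal{K}$, and $f : \mathcal{A} \to \mathcal{B}$ is an embedding. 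A standard bookkeeping argument (a back-and-forth style enumeration that revisits the list cofinally so that tasks arising at later stages are also handled) lets us, at stage $n+1$, use (AP) to glue in $\mathcal{B}$ over $\mathcal{A} \subseteq \mathcal{A}_n$, obtaining $\mathcal{A}_{n+1} \in \mathcal{K}$; the very first step uses (JEP), or just picks any member of $\mathcal{K}$, which is nonempty. Set $\mathcal{M} = \bigcup_n \mathcal{A}_n$.

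Next I would verify the two asserted properties of $\mathcal{M}$. That $age(\mathcal{M}) = \mathcal{K}$: the inclusion $age(\mathcal{M}) \subseteq \mathcal{K}$ holds because any finitely generated substructure of $\mathcal{M}$ lies inside some $\mathcal{A}_n \in \mathcal{K}$ (finitely many generators land in some $\mathcal{A}_n$ by the chain condition), and (HP) closes $\mathcal{K}$ under finitely generated substructures; the reverse inclusion holds because every isomorphism type in $\mathcal{K}$ shows up as some $\mathcal{A}_n$ or is embedded into one via a handled task, and (IP) closes under isomorphism. For ultrahomogeneity I would prove the key intermediate "extension property": if $\mathcal{A} \subseteq \mathcal{M}$ is finitely generated, $\mathcal{B} \in \mathcal{K}$, and $f : \mathcal{A} \to \mathcal{B}$ is an embedding, then there is an embedding $g : \mathcal{B} \to \mathcal{M}$ with $g \circ f$ the inclusion $\mathcal{A} \hookrightarrow \mathcal{M}$ — this is exactly what the bookkeeping guarantees. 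Given this, an isomorphism $h : \mathcal{A} \to \mathcal{A}'$ between finitely generated substructures of $\mathcal{M}$ is extended to an automorphism of $\mathcal{M}$ by a back-and-forth argument: enumerate $\mathcal{M}$, and alternately extend the domain to include the next element (using the extension property applied to the substructure generated by the current domain together with that element) and extend the range similarly, taking the union of the increasing chain of partial isomorphisms.

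For uniqueness, suppose $\mathcal{M}'$ is another countable structure with $age(\mathcal{M}') = \mathcal{K}$ and the ultrahomogeneity property. One first checks that $\mathcal{M}'$ also satisfies the extension property (this follows from ultrahomogeneity plus the age being $\mathcal{K}$, via a JEP/AP-style argument inside $\mathcal{M}'$: embed $\mathcal{B}$ somewhere in $\mathcal{M}'$, amalgamate, and use homogeneity to move things into place). Then a back-and-forth between $\mathcal{M}$ and $\mathcal{M}'$, using the extension property on each side to handle successive elements, produces an isomorphism $\mathcal{M} \cong \mathcal{M}'$.

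The main obstacle is the bookkeeping in the existence step: one must set up the enumeration of amalgamation tasks carefully so that a task $(\mathcal{A}, \mathcal{B}, f)$ with $\mathcal{A}$ a finitely generated substructure of $\mathcal{A}_n$ that only "appears" at stage $n$ is nonetheless processed at some later stage. This requires the countability hypothesis on $\mathcal{K}$ (up to isomorphism) together with the observation that each $\mathcal{A}_n$, being finitely generated, has only countably many finitely generated substructures and countably many embeddings out of them into members of $\mathcal{K}$; a diagonal enumeration of $\bigcup_n (\text{tasks available at stage } n)$ then does the job. Everything else — the age computation, the extension property, and the two back-and-forth arguments — is routine once the chain is built, and is insensitive to the fact that we are in the language of groups rather than a relational language.
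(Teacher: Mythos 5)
Your proposal is the standard Fra\"{i}ss\'{e} construction (chain built by countably many JEP/AP bookkeeping steps, the extension property, and back-and-forth for both ultrahomogeneity and uniqueness), and it is correct; the paper states this classical theorem without proof but uses exactly the same template when it proves the analogous strong $e$-Fra\"{i}ss\'{e} version. The only point worth tightening is that to get $\mathcal{K}\subseteq age(\mathcal{M})$ you should explicitly interleave JEP steps against an enumeration of $\mathcal{K}$ (as the paper does at its even stages) rather than rely on the amalgamation tasks alone.
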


\subsection{Universal Fra\"{i}ss\'{e} limits}

We generalize Fra\"{i}ss\'{e} constructions by strengthening the properties embeddings preserves. We will call an embedding that preserves universal (or equivalently $\Pi_1^{0}$) formulas a 
$\forall$-embedding. We will denote $\forall$-embeddings by $\rightarrow_{\forall}$. We remark that if $\mathcal{A}\subseteq_{\forall}\mathcal{B}$, i.e. the inclusion map is a $\forall$-embedding, 
then $\mathcal{A}$ is existentially closed in $\mathcal{B}$, i.e. for any quantifier free formula $\phi(\bar{x})$ with parameters in $\mathcal{A}$,  
if $\mathcal{B}\models\exists\bar{x} \phi(\bar{x})$, then  $\mathcal{A}\models\exists\bar{x} \phi(\bar{x})$. Observe that the other direction of the previous implication holds trivially by the 
fact that $\mathcal{A}$ is a substructure of $\mathcal{B}$. 

Let $\mathcal{M}$ be an $\mathcal{L}$-structure. The universal age of $\mathcal{M}$, $\forall$-$age(\mathcal{M})$, is 
the class of all finitely generated $\forall$-substructures of $\mathcal{M}$ (or substructures existentially closed in $\mathcal{M}$) up to isomorphism. 

\begin{definition}[Universal Fra\"{i}ss\'{e} class]
Let $\mathcal{K}$ be a countable (with respect to isomorphism types) non-empty class of finitely generated $\mathcal{L}$-structures with the following properties:
\begin{itemize}
 \item (IP) the class $\mathcal{K}$ is closed under isomorphisms;
 \item ($\forall$-HP) the class $\mathcal{K}$ is closed under finitely generated $\forall$-substructures;
 \item ($\forall$-JEP) if $\mathcal{A}_1$, $\mathcal{A}_2$ are in $\mathcal{K}$, then there is $\mathcal{B}$ in $\mathcal{K}$ 
 and $\forall$-embeddings $f_i:\mathcal{A}_i\rightarrow_{\forall} \mathcal{B}$ for $i\leq 2$;
 \item ($\forall$-AP) if $\mathcal{A}_0$, $\mathcal{A}_1$, $\mathcal{A}_2$ are in $\mathcal{K}$ and $f_i:\mathcal{A}_0\rightarrow_{\forall} \mathcal{A}_i$ for $i\leq 2$ are $\forall$-embeddings,   
 then there is $\mathcal{B}$ in $\mathcal{K}$ and $\forall$-embeddings $g_i:\mathcal{A}_i\rightarrow_{\forall} \mathcal{B}$ for $i\leq 2$ with $g_1\circ f_1=g_2\circ f_2$.  
\end{itemize}
Then $\mathcal{K}$ is a universal Fra\"{i}ss\'{e} class or for short a $\forall$-Fra\"{i}ss\'{e} class.
\end{definition}

In complete analogy to classical Fra\"{i}ss\'{e} limits one can prove:

\begin{theorem}\label{forallFraisse}
Let $\mathcal{K}$ be a $\forall$-Fra\"{i}ss\'{e} class. Then there exists a countable $\mathcal{L}$-structure $\mathcal{M}$ such that:
\begin{itemize}
 \item the $\forall$-age of $\mathcal{M}$ is exactly $\mathcal{K}$; 
 \item the $\mathcal{L}$-structure $\mathcal{M}$ is weakly $\forall$-homogeneous, i.e. every isomorphism between finitely generated $\forall$-substructures of $\mathcal{M}$ extends to 
 an automorphism of $\mathcal{M}$;
  \item the $\mathcal{L}$-structure $\mathcal{M}$ is the union of a $\forall$-chain of $\mathcal{L}$-structures in $\mathcal{K}$.
\end{itemize}
Moreover, any other countable $\mathcal{L}$-structure with the above properties is isomorphic to $\mathcal{M}$.
\end{theorem}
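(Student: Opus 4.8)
The plan is to carry out the classical Fra\"{i}ss\'{e} construction essentially verbatim, systematically replacing ordinary embeddings by $\forall$-embeddings, and to flag the one place where the weaker setting genuinely forces a change: a finitely generated substructure of a $\forall$-substructure need not itself be a $\forall$-substructure, so the habitual ``adjoin one element at a time'' back-and-forth is not available and must be replaced by one that jumps at each step to a whole member of $\mathcal{K}$. First I would record three elementary facts, all immediate from the reformulation noted above ($\mathcal{A}\subseteq_{\forall}\mathcal{B}$ iff $\mathcal{A}$ is existentially closed in $\mathcal{B}$): (i) a composition of $\forall$-embeddings is a $\forall$-embedding; (ii) if $\mathcal{A}\subseteq\mathcal{B}\subseteq\mathcal{C}$ and $\mathcal{A}\subseteq_{\forall}\mathcal{C}$, then $\mathcal{A}\subseteq_{\forall}\mathcal{B}$; and (iii) if $\mathcal{A}_0\subseteq_{\forall}\mathcal{A}_1\subseteq_{\forall}\cdots$ is a $\forall$-chain, then $\mathcal{A}_n\subseteq_{\forall}\bigcup_m\mathcal{A}_m$ for every $n$ (a quantifier-free formula with parameters in $\mathcal{A}_n$ satisfied in the union is satisfied in some $\mathcal{A}_k$, and its witness can then be pushed down the finite chain $\mathcal{A}_n\subseteq_{\forall}\cdots\subseteq_{\forall}\mathcal{A}_k$ one link at a time). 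Facts (ii) and (iii) combine to give: if $\mathcal{C}$ is finitely generated, $\mathcal{C}\subseteq\mathcal{M}:=\bigcup_n\mathcal{A}_n$ and $\mathcal{C}\subseteq_{\forall}\mathcal{M}$, then $\mathcal{C}\subseteq_{\forall}\mathcal{A}_n$ for all large $n$, whence $\mathcal{C}\in\mathcal{K}$ by $(\forall$-HP$)$.

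For existence I would build, by bookkeeping, a $\forall$-chain $\mathcal{A}_0\subseteq_{\forall}\mathcal{A}_1\subseteq_{\forall}\cdots$ of members of $\mathcal{K}$ and set $\mathcal{M}:=\bigcup_n\mathcal{A}_n$. Each $\mathcal{A}_n$ is finitely generated over a countable language, hence countable, hence has only countably many finitely generated substructures, each admitting only countably many $\forall$-embeddings into the (countably many) isomorphism types of $\mathcal{K}$; so the extension tasks ``$\mathcal{A}\subseteq_{\forall}\mathcal{A}_j$, $f\colon\mathcal{A}\to_{\forall}\mathcal{B}$ with $\mathcal{B}\in\mathcal{K}$'' together with the $(\forall$-JEP$)$ tasks ``embed the $j$-th isomorphism type of $\mathcal{K}$'' form a countable list that can be scheduled in an $\omega$-sequence (a task referring to stage $j$ scheduled only at stages $\ge j$). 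To handle an extension task at a stage $k\ge j$, apply $(\forall$-AP$)$ to the $\forall$-embeddings $\mathcal{A}\hookrightarrow\mathcal{A}_k$ (the composite along the chain; note $\mathcal{A}\in\mathcal{K}$ by $(\forall$-HP$)$) and $f\colon\mathcal{A}\to_{\forall}\mathcal{B}$ to obtain $\mathcal{A}_{k+1}\in\mathcal{K}$ with $\forall$-embeddings $\mathcal{A}_k\to_{\forall}\mathcal{A}_{k+1}$ and $\mathcal{B}\to_{\forall}\mathcal{A}_{k+1}$ whose composites with $\mathcal{A}\hookrightarrow\mathcal{A}_k$ and with $f$ coincide; after renaming by (IP) one may take $\mathcal{A}_k\subseteq_{\forall}\mathcal{A}_{k+1}$, leaving a $\forall$-embedding $g_2\colon\mathcal{B}\to_{\forall}\mathcal{A}_{k+1}$ with $g_2\circ f$ equal to the inclusion $\mathcal{A}\hookrightarrow\mathcal{A}_{k+1}$ (the $(\forall$-JEP$)$ tasks are handled the same way using $(\forall$-JEP$)$ in place of $(\forall$-AP$)$). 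By construction $\mathcal{M}$ is countable, is the union of the $\forall$-chain $(\mathcal{A}_n)$, and is \emph{$\mathcal{K}$-rich}: for every finitely generated $\mathcal{A}\subseteq_{\forall}\mathcal{M}$ and every $f\colon\mathcal{A}\to_{\forall}\mathcal{B}$ with $\mathcal{B}\in\mathcal{K}$ there is a $\forall$-embedding $g\colon\mathcal{B}\to_{\forall}\mathcal{M}$ with $g\circ f$ the inclusion of $\mathcal{A}$, since $\mathcal{A}$ is a finitely generated $\forall$-substructure of some $\mathcal{A}_j$ and the corresponding task was resolved at a later stage. Finally $\forall$-$age(\mathcal{M})=\mathcal{K}$: ``$\subseteq$'' is the last sentence of the previous paragraph, and ``$\supseteq$'' holds because by the $(\forall$-JEP$)$ tasks every $\mathcal{A}\in\mathcal{K}$ $\forall$-embeds into some $\mathcal{A}_n\subseteq_{\forall}\mathcal{M}$, with image a finitely generated $\forall$-substructure isomorphic to $\mathcal{A}$.

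Weak $\forall$-homogeneity and uniqueness I would obtain from one back-and-forth, prefaced by the lemma that \emph{any countable $\mathcal{N}$ with $\forall$-$age(\mathcal{N})=\mathcal{K}$ which is the union of a $\forall$-chain $(\mathcal{B}_n)$ of members of $\mathcal{K}$ and is weakly $\forall$-homogeneous is itself $\mathcal{K}$-rich}. For the lemma: given finitely generated $\mathcal{A}\subseteq_{\forall}\mathcal{N}$ and $f\colon\mathcal{A}\to_{\forall}\mathcal{B}$ with $\mathcal{B}\in\mathcal{K}$, facts (ii)--(iii) give $\mathcal{A}\subseteq_{\forall}\mathcal{B}_n$ for some $n$; $(\forall$-AP$)$ applied to $\mathcal{A}\subseteq_{\forall}\mathcal{B}_n$ and $f$ yields $\mathcal{C}\in\mathcal{K}$ with $\forall$-embeddings $g_1\colon\mathcal{B}_n\to_{\forall}\mathcal{C}$ and $g_2\colon\mathcal{B}\to_{\forall}\mathcal{C}$ satisfying $g_1|_{\mathcal{A}}=g_2\circ f$; since $\mathcal{C}\in\mathcal{K}=\forall$-$age(\mathcal{N})$ there is a $\forall$-embedding $j\colon\mathcal{C}\to_{\forall}\mathcal{N}$; now $j\circ g_1$ and the inclusion both realize $\mathcal{B}_n$ as a finitely generated $\forall$-substructure of $\mathcal{N}$ (images of $\forall$-embeddings are $\forall$-substructures), so by weak $\forall$-homogeneity the isomorphism $(j\circ g_1)^{-1}$ between the two images extends to an automorphism $\Theta$ of $\mathcal{N}$, and $g:=\Theta\circ j\circ g_2$ is a $\forall$-embedding with $g\circ f$ the inclusion of $\mathcal{A}$. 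Granting the lemma (and that $\mathcal{M}$ itself is $\mathcal{K}$-rich), I would run the standard back-and-forth between two countable $\mathcal{K}$-rich structures $\mathcal{N}_1,\mathcal{N}_2$, each a union of a $\forall$-chain of members of $\mathcal{K}$, maintaining an increasing chain of isomorphisms $h_k$ between finitely generated $\forall$-substructures: at a ``forth'' step enlarge the current domain $\mathcal{D}$ all the way to a member $\mathcal{A}$ of $\mathcal{N}_1$'s chain containing $\mathcal{D}$ and the next enumerated point of $\mathcal{N}_1$ (so $\mathcal{D}\subseteq_{\forall}\mathcal{A}$ by (ii)--(iii)), and use $\mathcal{K}$-richness of $\mathcal{N}_2$ on $h_k^{-1}\colon h_k(\mathcal{D})\to_{\forall}\mathcal{A}$ to extend $h_k$ to a $\forall$-embedding of $\mathcal{A}$ into $\mathcal{N}_2$; ``back'' steps are symmetric; the union of the $h_k$ is an isomorphism $\mathcal{N}_1\cong\mathcal{N}_2$. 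Taking $\mathcal{N}_1=\mathcal{N}_2=\mathcal{M}$ but starting from a prescribed isomorphism $h_0$ between finitely generated $\forall$-substructures produces an automorphism of $\mathcal{M}$ extending $h_0$ (weak $\forall$-homogeneity); taking $\mathcal{N}_1=\mathcal{M}$ and $\mathcal{N}_2$ any other countable structure with the three listed properties (hence $\mathcal{K}$-rich, by the lemma) produces $\mathcal{M}\cong\mathcal{N}_2$ (uniqueness).

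I expect the only real obstacle to be the one flagged at the start: since finitely generated substructures of $\mathcal{M}$ need not be $\forall$-substructures, the back-and-forth cannot proceed point by point, and one is forced to advance instead along the defining $\forall$-chains — whose members \emph{are} $\forall$-substructures by fact (iii) — so that every extension step produced by $\mathcal{K}$-richness stays inside the category of $\forall$-embeddings. Everything else is the bookkeeping of the classical argument.
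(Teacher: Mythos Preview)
Your proposal is correct and follows essentially the same approach as the paper. The paper does not actually write out a proof of this theorem: it declares the argument ``identical'' to that of the strong $e$-Fra\"{i}ss\'{e} theorem proved just below, and later notes that Theorem~\ref{forallFraisse} is a special case of the general $\sqsubseteq$-Fra\"{i}ss\'{e} Theorem~\ref{th:2.14}. Your write-up is precisely the natural instantiation of that template --- build the chain by alternating $(\forall$-JEP$)$ and $(\forall$-AP$)$ tasks, establish the extension property (your $\mathcal{K}$-richness), and run the back-and-forth along whole members of the defining $\forall$-chain rather than pointwise --- which is exactly the adjustment the paper flags in Lemma~\ref{HomogeneousExtension} and in the remarks after Theorem~\ref{forallFraisse}. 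Your explicit lemma that the three listed properties already imply $\mathcal{K}$-richness is a detail the paper leaves to the reader under ``an easy back-and-forth argument,'' but it is the right thing to isolate and your proof of it is sound.
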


Observe that in this case in order to obtain uniqueness of the limit we need to additionally assume that $\mathcal{M}$ is a union of a $\forall$-chain. The main reason for that 
is that a substructure generated by a finite tuple is not necessarily a $\forall$-substructure, thus we cannot assume that a countable $\mathcal{L}$-structure 
is exhausted by its finitely generated $\forall$-substructures. 

As yet another difference from the classical Fra\"{i}ss\'{e} theorem is that the $\forall$-Fra\"{i}ss\'{e} limit is not $\forall$-homogeneous, but only weakly $\forall$-homogeneous. 
Recall, that an $\mathcal{L}$-structure $\mathcal{M}$ is $\forall$-homogeneous if whenever two finite tuples from $\mathcal{M}$ satisfy the same universal formulas, there is 
an automorphism of $\mathcal{M}$ taking one to the other. This can be ``corrected'' if one considers partial $\forall$-embeddings. If $A_0$ is a subset of the domain of the 
$\mathcal{L}$-structure $\mathcal{A}$, then the map $f:A_0\rightarrow \mathcal{B}$ is a partial $\forall$-embedding if for any quantifier-free formula $\phi(\bar{x},\bar{a})$ over $A_0$, if 
$\mathcal{A}\models\forall\bar{x}\phi(\bar{x},\bar{a})$ then $\mathcal{B}\models\forall\bar{x}\phi(\bar{x},f(\bar{a}))$.  

On the light of the above we introduce the notion of a {\em strong $\forall$-Fra\"{i}sse class} by strengthening the $\forall$-AP property. 

\begin{definition}[Strong Universal Fra\"{i}ss\'{e} class]
Let $\mathcal{K}$ be a countable (with respect to isomorphism types) non-empty class of finitely generated $\mathcal{L}$-structures with the following properties:
\begin{itemize}
 \item (IP) the class $\mathcal{K}$ is closed under isomorphisms;
 \item ($\forall$-HP) the class $\mathcal{K}$ is closed under finitely generated $\forall$-substructures;
 \item ($\forall$-JEP) if $\mathcal{A}_1$, $\mathcal{A}_2$ are in $\mathcal{K}$, then there is $\mathcal{B}$ in $\mathcal{K}$ 
 and $\forall$-embeddings $f_i:\mathcal{A}_i\rightarrow_{\forall} \mathcal{B}$ for $i\leq 2$;
 \item (strong $\forall$-AP) if $\mathcal{A}_0$, $\mathcal{A}_1$, $\mathcal{A}_2$ are in $\mathcal{K}$ and $f_i:\bar{a}\rightarrow_{\forall} \mathcal{A}_i$ for $i\leq 2$ are partial $\forall$-maps of 
 some tuple $\bar{a}\in\mathcal{A}_0$, then there is $\mathcal{B}$ in $\mathcal{K}$ and $\forall$-embeddings $g_i:\mathcal{A}_i\rightarrow_{\forall} \mathcal{B}$ for $i\leq 2$ with 
 $g_1\circ f_1(\bar{a})=g_2\circ f_2(\bar{a})$.    
\end{itemize}
Then $\mathcal{K}$ is a strong universal Fra\"{i}ss\'{e} class or for short a strong $\forall$-Fra\"{i}ss\'{e} class.
\end{definition}

We can prove that strong universal Fra\"{i}ss\'{e} limits are $\forall$-homogeneous. 

\begin{theorem}
Let $\mathcal{K}$ be a strong $\forall$-Fra\"{i}ss\'{e} class. Then there exists a countable $\mathcal{L}$-structure $\mathcal{M}$ such that:
\begin{itemize}
 \item the $\forall$-age of $\mathcal{M}$ is exactly $\mathcal{K}$; 
 \item the $\mathcal{L}$-structure $\mathcal{M}$ is $\forall$-homogeneous;
 \item the $\mathcal{L}$-structure $\mathcal{M}$ is the union of a $\Pi_1^0$-chain of $\mathcal{L}$-structures in $\mathcal{K}$.
\end{itemize}
Moreover, any other countable $\mathcal{L}$-structure with the above properties is isomorphic to $\mathcal{M}$.
\end{theorem}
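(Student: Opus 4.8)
The plan is to follow the blueprint of Theorem~\ref{forallFraisse}, upgrading weak $\forall$-homogeneity to full $\forall$-homogeneity by exploiting the strong $\forall$-AP. First I would observe that a strong $\forall$-Fra\"{i}ss\'{e} class is in particular a $\forall$-Fra\"{i}ss\'{e} class: since every structure in $\mathcal{K}$ is finitely generated, a $\forall$-embedding $f_i\colon\mathcal{A}_0\rightarrow_{\forall}\mathcal{A}_i$ is determined by the image $f_i(\bar a)$ of a generating tuple $\bar a$ of $\mathcal{A}_0$, and its restriction $f_i|_{\bar a}$ is a partial $\forall$-map; so strong $\forall$-AP applied to $\bar a$ and $f_1|_{\bar a},f_2|_{\bar a}$ yields ordinary $\forall$-AP. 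Hence Theorem~\ref{forallFraisse} produces a countable $\mathcal{M}$ with $\forall$-$age(\mathcal{M})=\mathcal{K}$, weakly $\forall$-homogeneous, equal to the union of a $\forall$-chain $\mathcal{A}_0\subseteq_{\forall}\mathcal{A}_1\subseteq_{\forall}\cdots$ of members of $\mathcal{K}$. I would record the standard consequences: each $\mathcal{A}_n\subseteq_{\forall}\mathcal{M}$ (a witness to an existential formula over $\mathcal{A}_n$ lies in some $\mathcal{A}_k$, and $\mathcal{A}_n$ is existentially closed in $\mathcal{A}_k$); a finitely generated $\forall$-substructure of $\mathcal{M}$ contained in some $\mathcal{A}_n$ is a $\forall$-substructure of $\mathcal{A}_n$; and, exactly as in the classical case, weak $\forall$-homogeneity together with $\forall$-$age(\mathcal{M})=\mathcal{K}$ endows $\mathcal{M}$ with the $\forall$-\emph{extension property}: whenever $\mathcal{A}\subseteq_{\forall}\mathcal{B}$ are in $\mathcal{K}$ and $e\colon\mathcal{A}\rightarrow_{\forall}\mathcal{M}$ is a $\forall$-embedding, $e$ extends to a $\forall$-embedding $\mathcal{B}\rightarrow_{\forall}\mathcal{M}$.

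The heart of the proof is a one-point extension property for $\mathcal{M}$: if $\bar a,\bar b$ are finite tuples of $\mathcal{M}$ with the same universal type (equivalently, $\bar a\mapsto\bar b$ and $\bar b\mapsto\bar a$ are partial $\forall$-maps of $\mathcal{M}$) and $c\in\mathcal{M}$, then there is $d\in\mathcal{M}$ with $\bar a c$ and $\bar b d$ again of the same universal type. To produce $d$, fix $n$ with $\bar a,\bar b,c\in\mathcal{A}_n$ and apply strong $\forall$-AP with $\mathcal{A}_0=\mathcal{A}_1=\mathcal{A}_2=\mathcal{A}_n$, tuple $\bar a$, $f_1=\mathrm{id}_{\bar a}$, and $f_2=(\bar a\mapsto\bar b)$; here $f_2$ is a partial $\forall$-map because $\mathcal{A}_n\subseteq_{\forall}\mathcal{M}$ and $\bar a\equiv_{\forall}\bar b$ in $\mathcal{M}$. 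This gives $\mathcal{D}\in\mathcal{K}$ and $\forall$-embeddings $g_1,g_2\colon\mathcal{A}_n\rightarrow_{\forall}\mathcal{D}$ with $g_1(\bar a)=g_2(\bar b)$. Transporting $\mathcal{D}$ to an isomorphic copy (still in $\mathcal{K}$) in which $\mathcal{A}_n$ sits as a $\forall$-substructure via $g_2$, the $\forall$-extension property applied to $\mathcal{A}_n\subseteq_{\forall}\mathcal{D}$ and the inclusion $\mathcal{A}_n\hookrightarrow\mathcal{M}$ yields a $\forall$-embedding $h\colon\mathcal{D}\rightarrow_{\forall}\mathcal{M}$ with $h\circ g_2=\mathrm{incl}_{\mathcal{A}_n}^{\mathcal{M}}$. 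Set $d:=h(g_1(c))$. That $\bar a c$ and $\bar b d$ have the same universal type is then a diagram chase: for the forward direction one pushes a universal formula over $\bar a c$ from $\mathcal{A}_n$ through $g_1$ into $\mathcal{D}$ and through $h$ into $\mathcal{M}$, using $hg_1(\bar a)=hg_2(\bar b)=\bar b$; for the backward direction one instead pulls a universal formula over $\bar b d$ back to $\mathcal{A}_n$ using that $h(\mathcal{D})\subseteq_{\forall}\mathcal{M}$ and $g_1(\mathcal{A}_n)\subseteq_{\forall}\mathcal{D}$, and then transfers up via $\mathcal{A}_n\subseteq_{\forall}\mathcal{M}$. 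The main obstacle is precisely keeping these directions straight: $\forall$-embeddings preserve universal formulas only forwards, so the backward direction must be routed through the fact that images of $\forall$-embeddings, and members of a $\forall$-chain, are genuine $\forall$-substructures and hence agree with the ambient structure on universal formulas over their elements.

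With the one-point extension property in hand, $\forall$-homogeneity follows by a routine back-and-forth: given $\bar a\equiv_{\forall}\bar b$ in $\mathcal{M}$, enumerate $\mathcal{M}$ and build an increasing chain of finite partial maps, maintaining at each stage that the map and its inverse are partial $\forall$-maps (equivalently, that domain and image have equal universal type), alternately absorbing the next point into the domain and into the range using the one-point extension property (and its mirror image, obtained by swapping $\bar a$ and $\bar b$); the union is an automorphism of $\mathcal{M}$ taking $\bar a$ to $\bar b$. Finally, for uniqueness, any countable $\mathcal{N}$ with the three listed properties is in particular weakly $\forall$-homogeneous — an isomorphism between finitely generated $\forall$-substructures of $\mathcal{N}$ carries a generating tuple to a tuple of the same universal type in $\mathcal{N}$, so $\forall$-homogeneity extends it to an automorphism — has $\forall$-$age$ equal to $\mathcal{K}$, and is the union of a $\forall$-chain from $\mathcal{K}$; hence the uniqueness clause of Theorem~\ref{forallFraisse} gives $\mathcal{N}\cong\mathcal{M}$.
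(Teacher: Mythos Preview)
Your proof is correct, but it follows a different architecture from the one the paper (implicitly) intends. The paper skips this proof and declares it ``identical'' to the proof given in the elementary case; there the limit is built directly by interleaving $e$-JEP steps (to realise the age) with strong $e$-AP steps (to guarantee the \emph{strong extension property}: every partial elementary map on a tuple of a finitely generated elementary substructure extends to an elementary embedding of that substructure), and homogeneity is then read off from Lemma~\ref{HomogeneousExtension}. The analogue for $\forall$ would likewise build the $\forall$-chain so that every partial $\forall$-map eventually gets amalgamated in.

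You instead treat Theorem~\ref{forallFraisse} as a black box, obtain the weakly $\forall$-homogeneous limit once and for all, and then upgrade to full $\forall$-homogeneity by a post-hoc one-point extension argument: strong $\forall$-AP inside a single member $\mathcal{A}_n$ of the chain, followed by the ordinary (not strong) $\forall$-extension property of $\mathcal{M}$ to pull the amalgam back into $\mathcal{M}$. Your diagram chase for the two directions of $\bar a c \equiv_{\forall} \bar b d$ is clean and uses exactly the right facts (that images of $\forall$-embeddings are $\forall$-substructures, so universal types over their elements agree with those computed in the ambient structure). The uniqueness reduction to Theorem~\ref{forallFraisse} is also fine: $\forall$-homogeneity implies weak $\forall$-homogeneity because generating tuples of isomorphic $\forall$-substructures have the same universal type in $\mathcal{N}$.

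Your route is more modular---it cleanly separates the existence of the limit from the upgrade in homogeneity, and reuses the uniqueness clause already proved---whereas the paper's route bakes the strong extension property into the construction and appeals to a separate lemma characterising homogeneity. Both are standard Fra\"{i}ss\'{e}-style arguments; neither is materially harder, but yours avoids re-running the chain construction.
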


With respect to these notions we will prove that the class of nonabelian limit groups forms a $\forall$-Fra\"{i}ss\'{e} class (see Theorem \ref{LimitFraisse}) and the class of 
abelian limit groups, i.e. finitely generated free abelian groups, forms a strong $\forall$-Fra\"{i}ss\'{e} class (see Theorem \ref{FreeAbelianFraisse}). It is an open question 
whether nonabelian limit groups form a strong $\forall$-Fra\"{i}ss\'{e} class.

 We will skip the proofs of the above results and only prove the theorems of the next subsection since the arguments are identical.

\subsection{Elementary Fra\"{i}ss\'{e} limits}

Let $\mathcal{M}$ be an $\mathcal{L}$-structure. The elementary age of $\mathcal{M}$, $e$-$age(\mathcal{M})$, is 
the class of all finitely generated elementary substructures of $\mathcal{M}$ up to isomorphism. 

When $\mathcal{A}$ is an elementary substructure of $\mathcal{M}$ we denote it by $\mathcal{A}\prec_{e}\mathcal{M}$.

\begin{definition}
Let $\mathcal{A}, \mathcal{B}$ be $\mathcal{L}$-structures and $\bar{a}$ be a tuple from $\mathcal{A}$. Then a map $f:\bar{a}\rightarrow\mathcal{B}$ 
is called partial elementary if for any $\mathcal{L}$-formula $\phi(\bar{x})$, $\mathcal{A}\models\phi(\bar{a})$ if and only if $\mathcal{B}\models\phi(f(\bar{a}))$. 
\end{definition}

\begin{definition}\label{e-Fraisse}
Let $\mathcal{K}$ be a countable (with respect to isomorphism types) non-empty class of finitely generated $\mathcal{L}$-structures with the following properties:
\begin{itemize}
 \item (IP) the class $\mathcal{K}$ is closed under isomorphisms;
 \item (e-HP) the class $\mathcal{K}$ is closed under finitely generated elementary substructures, i.e. if $\mathcal{A}\in \mathcal{K}$ and $\mathcal{B}$ 
 is a finitely generated elementary substructure of $\mathcal{A}$, then $\mathcal{B}\in\mathcal{K}$;
 \item (e-JEP) if $\mathcal{A}_1$, $\mathcal{A}_2$ are in $\mathcal{K}$, then there is $\mathcal{B}$ in $\mathcal{K}$ 
 and elementary embeddings $f_i:\mathcal{A}_i\rightarrow \mathcal{B}$ for $i\leq 2$;
 \item (strong e-AP) if $\mathcal{A}_0$, $\mathcal{A}_1$, $\mathcal{A}_2$ are in $\mathcal{K}$ and $f_i:\bar{a}\rightarrow \mathcal{A}_i$, for $i\leq 2$, are partial elementary maps of 
 some tuple $\bar{a}\in\mathcal{A}_0$,  
 then there is $\mathcal{B}$ in $\mathcal{K}$ and elementary embeddings $g_i:\mathcal{A}_i\rightarrow \mathcal{B}$ for $i\leq 2$ with $g_1\circ f_1(\bar{a})=g_2\circ f_2(\bar{a})$.  
\end{itemize}

Then $\mathcal{K}$ is a strong elementary Fra\"{i}ss\'{e} class or a strong $e$-Fra\"{i}ss\'{e} class for short. 
\end{definition}

\begin{definition}
Let $\mathcal{M}$ be a countable $\mathcal{L}$-structure. Then $\mathcal{M}$ is homogeneous if there exists an automorphism taking the tuple $\bar{a}$ to the tuple $\bar{b}$ 
whenever $tp^{\mathcal{M}}(\bar{a})=tp^{\mathcal{M}}(\bar{b})$.
\end{definition}

\begin{definition}[strong $e$-Extension Property]
An $\mathcal{L}$-structure has the strong $e$-Extension Property if for any $\mathcal{A}$ finitely generated elementary substructures of $\mathcal{M}$, any $\bar{a}\in\mathcal{A}$ and 
any partial elementary embedding $f:\bar{a}\rightarrow \mathcal{M}$, there exists an elementary embedding $g:\mathcal{A}\rightarrow\mathcal{M}$ that extends $f$.  
\end{definition}

\begin{lemma}\label{HomogeneousExtension}
Let $\mathcal{M}$ be an $\mathcal{L}$-structure which is the union of an elementary chain $\mathcal{M}_1\prec_e\mathcal{M}_2\prec_e\ldots\prec_e\mathcal{M}_n\prec_e\ldots$ of 
finitely generated $\mathcal{L}$-structures. Then $\mathcal{M}$ is homogeneous if and only if it has the strong $e$-Extension Property. 
\end{lemma}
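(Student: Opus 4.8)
The plan is to prove both directions of the equivalence in Lemma~\ref{HomogeneousExtension}, using throughout that $\mathcal{M}=\bigcup_n \mathcal{M}_n$ with each $\mathcal{M}_n$ finitely generated and $\mathcal{M}_n \prec_e \mathcal{M}_{n+1} \prec_e \mathcal{M}$ (the last elementarity following from the Tarski--Vaught chain lemma). Note each $\mathcal{M}_n$ is a finitely generated elementary substructure of $\mathcal{M}$, so the hypothesis of the strong $e$-Extension Property applies to each of them.

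\emph{($\Leftarrow$) Strong $e$-Extension Property implies homogeneity.} Suppose $\bar a, \bar b$ are tuples from $\mathcal{M}$ with $tp^{\mathcal{M}}(\bar a)=tp^{\mathcal{M}}(\bar b)$; in particular the map $\bar a \mapsto \bar b$ is partial elementary. I would build an automorphism by a standard back-and-forth. Fix an enumeration $\mathcal{M}=\{m_1,m_2,\dots\}$. At each stage we have a finite partial elementary map; to extend it to cover a new domain element $m_k$, first locate some $\mathcal{M}_n$ containing the current (finite) domain together with $m_k$, so the current map is a partial elementary map $\bar a' \to \mathcal{M}$ with $\bar a' \in \mathcal{M}_n$ and $\mathcal{M}_n$ a finitely generated elementary substructure of $\mathcal{M}$; apply the strong $e$-Extension Property to extend it to an elementary embedding $g:\mathcal{M}_n \to \mathcal{M}$, and restrict $g$ to the enlarged finite domain. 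The "forth" half handles range elements symmetrically (a partial elementary map is invertible on its domain, and its inverse is again partial elementary, so the same argument applies to $f^{-1}$). The union of the resulting chain of finite partial elementary maps is a bijection $\mathcal{M}\to\mathcal{M}$ that is elementary, hence an automorphism, and it takes $\bar a$ to $\bar b$.

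\emph{($\Rightarrow$) Homogeneity implies the strong $e$-Extension Property.} Let $\mathcal{A}$ be a finitely generated elementary substructure of $\mathcal{M}$, say $\mathcal{A}$ generated by a tuple $\bar c$, let $\bar a \in \mathcal{A}$, and let $f:\bar a \to \mathcal{M}$ be partial elementary. Since $\mathcal{A}\prec_e \mathcal{M}$, we have $tp^{\mathcal{M}}(\bar a) = tp^{\mathcal{A}}(\bar a)$, and $f$ being partial elementary means $tp^{\mathcal{M}}(f(\bar a)) = tp^{\mathcal{A}}(\bar a) = tp^{\mathcal{M}}(\bar a)$. By homogeneity of $\mathcal{M}$ there is an automorphism $\sigma$ of $\mathcal{M}$ with $\sigma(\bar a) = f(\bar a)$. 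Then $g := \sigma\restriction \mathcal{A} : \mathcal{A} \to \mathcal{M}$ is an elementary embedding (restriction of an automorphism to an elementary substructure) extending $f$.

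I expect the main subtlety to lie in the ($\Leftarrow$) direction, specifically in the bookkeeping of the back-and-forth: one must be careful that at each step the current finite partial map genuinely sits inside some $\mathcal{M}_n$ that is \emph{elementary} in $\mathcal{M}$ (which is exactly what the chain condition buys us, since an arbitrary finitely generated substructure need not be elementary), and that the extension step only ever enlarges the map, so that the eventual union is well-defined and exhausts $\mathcal{M}$ in both coordinates. The rest is routine: that an elementary bijection is an automorphism, and that inverses and restrictions of partial/elementary maps behave as claimed, require no real work.
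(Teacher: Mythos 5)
Your proposal is correct and follows essentially the same route as the paper: the forward direction restricts an automorphism supplied by homogeneity to the finitely generated elementary substructure, and the converse runs a back-and-forth in which each one-element extension is obtained by locating an $\mathcal{M}_n$ of the chain containing the current finite configuration and applying the strong $e$-Extension Property there. Your write-up merely spells out the back-and-forth bookkeeping that the paper leaves as "a back-and-forth argument."
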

\begin{proof}
First assume that $\mathcal{M}$ is homogeneous. Let $\mathcal{A}\prec_e\mathcal{B}$ be finitely generated elementary substructures of $\mathcal{M}$ and  
$f:\bar{a}\rightarrow \mathcal{M}$ be a partial elementary map, where $\bar{a}$ is a tuple from $\mathcal{A}$. 
Then $tp^{\mathcal{M}}(\bar{a})=tp^{\mathcal{A}}(\bar{a})=tp^{\mathcal{M}}(f(\bar{a}))$. By the homogeneity of $\mathcal{M}$ there exists an automorphism $g$ 
taking $\bar{a}$ to $f(\bar{a})$. The restriction of $g$ on $\mathcal{B}$ is an elementary map extending $f$. Note that this direction does not use that $\mathcal{M}$ 
is the union of an elementary chain.,

For the other direction, assume that $\mathcal{M}$ has the strong $e$-Extension Property and let $tp^{\mathcal{M}}(\bar{a})=tp^{\mathcal{M}}(\bar{b})$. Let $\mathcal{M}_i$ be 
an $\mathcal{L}$-structure in the elementary chain that contains $\bar{a}$. Let $c$ be an element of $\mathcal{M}$ and $\mathcal{M}_j$ be an $\mathcal{L}$-structure in 
the elementary chain that contains both $\bar{a}$ and $c$. We obviously have $\mathcal{M}_i\prec_e\mathcal{M}_j$ and a partial elementary map 
$f:\bar{a}\rightarrow\mathcal{M}$ with $f(\bar{a})=\bar{b}$. By the $e$-Extension Property we get an elementary map $g:\mathcal{M}_j\rightarrow\mathcal{M}$ that extends $f$. 
Thus, we get $tp^{\mathcal{M}}(\bar{a},c)=tp^{\mathcal{M}}(\bar{b}, g(c))$ and we conclude the proof by a back-and-forth argument. 
 
\end{proof}

\begin{theorem}
Let $\mathcal{K}$ be a strong $e$-Fra\"{i}ss\'{e} class. Then there exists a countable $\mathcal{L}$-structure $\mathcal{M}$ such that:
\begin{itemize}
 \item the $e$-age of $\mathcal{M}$ is exactly $\mathcal{K}$; 
 \item the $\mathcal{L}$-structure $\mathcal{M}$ is homogeneous;
 \item the $\mathcal{L}$-structure $\mathcal{M}$ is the union of an elementary chain of $\mathcal{L}$-structures in $\mathcal{K}$.
\end{itemize}
Moreover, any other countable $\mathcal{L}$-structure with the above properties is isomorphic to $\mathcal{M}$.
\end{theorem}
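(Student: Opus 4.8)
The plan is to mimic the classical Fra\"{i}ss\'{e} construction, replacing ordinary embeddings with elementary embeddings throughout, and then use Lemma~\ref{HomogeneousExtension} to upgrade weak homogeneity to genuine homogeneity. Concretely, I would build $\mathcal{M}$ as the union of an elementary chain $\mathcal{M}_0 \prec_e \mathcal{M}_1 \prec_e \cdots$ with all $\mathcal{M}_n \in \mathcal{K}$, arranged so that (i) every isomorphism type in $\mathcal{K}$ appears as some $\mathcal{M}_n$ (this uses (e-JEP) together with countability of $\mathcal{K}$ up to isomorphism), and (ii) a bookkeeping/dovetailing argument guarantees the \emph{strong $e$-Extension Property}: given any finitely generated elementary substructure $\mathcal{A}$ occurring in the chain, any tuple $\bar a \in \mathcal{A}$, and any partial elementary map $f \colon \bar a \to \mathcal{M}$ (whose image, being a finite tuple, lies in some $\mathcal{M}_m$), we can find a later stage $\mathcal{M}_k$ and an elementary embedding $g\colon \mathcal{A} \to \mathcal{M}_k$ extending $f$. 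Step (ii) is exactly where (strong e-AP) is invoked: apply it to $\mathcal{A}_0 = \langle \bar a\rangle$ (or rather take $\bar a$ as the relevant tuple), $\mathcal{A}_1 = \mathcal{A}$, $\mathcal{A}_2 = \mathcal{M}_m$, with the two partial elementary maps being the inclusion $\bar a \hookrightarrow \mathcal{A}$ and $f\colon \bar a \to \mathcal{M}_m$; one obtains $\mathcal{B}\in\mathcal{K}$ with elementary embeddings $g_1\colon \mathcal{A}\to\mathcal{B}$, $g_2\colon \mathcal{M}_m\to\mathcal{B}$ agreeing on $\bar a$. One then amalgamates $\mathcal{B}$ back over $\mathcal{M}_m$ into the chain (again via (e-JEP), or an $\mathcal{A}_0$-version of amalgamation with $\mathcal{A}_0 = \mathcal{M}_m$, which follows from strong e-AP by taking $\bar a$ to be a generating tuple of $\mathcal{M}_m$ and the two maps to be the identity) so that, after identifying, $\mathcal{A}$ elementarily embeds into a later $\mathcal{M}_k$ extending $f$.

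After the construction, I would verify the three bulleted properties. The chain condition is immediate by construction. For the $e$-age: every $\mathcal{M}_n\in\mathcal{K}$ and $\mathcal{M}_n \prec_e \mathcal{M}$, so each isomorphism type of the chain lies in $\mathrm{e\text{-}age}(\mathcal{M})$; conversely, if $\mathcal{N}$ is a finitely generated elementary substructure of $\mathcal{M}$, its finite generating tuple lies in some $\mathcal{M}_n$, and since $\mathcal{M}_n \prec_e \mathcal{M}$ and $\mathcal{N}\prec_e\mathcal{M}$ one checks $\mathcal{N}\prec_e\mathcal{M}_n$ (elementarity of both inclusions into $\mathcal{M}$ forces elementarity of the inclusion between them, because $\mathcal{N}$'s generators are in $\mathcal{M}_n$), whence $\mathcal{N}\in\mathcal{K}$ by (e-HP). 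Homogeneity follows from Lemma~\ref{HomogeneousExtension} once the strong $e$-Extension Property is established, since $\mathcal{M}$ is the union of an elementary chain of finitely generated structures. For uniqueness, suppose $\mathcal{M}'$ is another countable structure with the same three properties; I would run a standard back-and-forth between $\mathcal{M}$ and $\mathcal{M}'$. Write $\mathcal{M} = \bigcup \mathcal{M}_n$ and $\mathcal{M}' = \bigcup \mathcal{M}'_n$ as elementary chains of structures in $\mathcal{K}$. Starting from the empty partial elementary map (well-defined since both $e$-ages equal $\mathcal{K}$, in particular the classes have the same structures, and (e-JEP) lets us find a common structure, or one just starts with a trivial map), one alternately extends: given a finite partial elementary map, enlarge its domain to include the next generator of the next $\mathcal{M}_n$, using that this generator sits inside a finitely generated elementary substructure of $\mathcal{M}$ which belongs to $\mathcal{K}$, hence (being in the $e$-age of $\mathcal{M}'$) elementarily embeds into $\mathcal{M}'$, and then the strong $e$-Extension Property of $\mathcal{M}'$ is used to make that embedding compatible with the partial map already built; symmetrically on the other side. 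The union of the resulting chain of partial elementary maps is an isomorphism $\mathcal{M}\to\mathcal{M}'$.

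The main obstacle I anticipate is the careful bookkeeping in step (ii): one must enumerate not just the isomorphism types in $\mathcal{K}$ but all triples (finitely generated elementary substructure appearing so far, tuple in it, partial elementary map of that tuple into the structure built so far) in a way that every such triple is eventually handled, while simultaneously keeping the whole construction inside an elementary chain of members of $\mathcal{K}$. There are two subtleties: first, after extending by an amalgam $\mathcal{B}$, the set of ``finitely generated elementary substructures appearing in the chain'' grows, so the enumeration must be a dovetailing over a moving target (handled by the usual pairing-function trick, processing requests in rounds and always deferring new requests to later rounds); second, one must be sure the amalgams produced by (strong e-AP) can actually be \emph{chained} elementarily — i.e. that $\mathcal{M}_n \prec_e \mathcal{B}$, not merely $\mathcal{M}_n \hookrightarrow \mathcal{B}$ — which is why I phrase the amalgamation step to take $\mathcal{A}_0 = \mathcal{M}_n$ with identity maps (or a generating-tuple version), so that (strong e-AP) outputs an \emph{elementary} embedding of $\mathcal{M}_n$. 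Everything else is a routine transcription of the classical argument, so I would not belabor it; the write-up can reasonably say ``the argument is the standard one, with elementary embeddings in place of embeddings and Lemma~\ref{HomogeneousExtension} providing homogeneity,'' and then spell out only the strong $e$-Extension Property verification and the back-and-forth for uniqueness in the detail above.
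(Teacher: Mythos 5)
Your proposal is correct and follows essentially the same route as the paper: an elementary chain built by alternating (e-JEP) steps (to capture the $e$-age) with (strong e-AP) steps applied to the current top of the chain (to secure the strong $e$-Extension Property), homogeneity via Lemma~\ref{HomogeneousExtension}, and a back-and-forth for uniqueness. Your worry about chaining the amalgams elementarily is already resolved by the statement of (strong e-AP), which outputs elementary embeddings of both $\mathcal{A}_1$ and $\mathcal{A}_2$, exactly as the paper uses it.
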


\begin{proof}
We will construct an elementary chain $\mathcal{M}_1\prec_e\mathcal{M}_2\prec_e\ldots\prec_e\mathcal{M}_n\prec_e\ldots$ of $\mathcal{L}$-structures in $\mathcal{K}$ and 
prove that its union $\mathcal{M}:=\bigcup_{i<\omega}\mathcal{M}_i$ has all the desired properties.

Let $\{\mathcal{A}_i\}_{i<\omega}$ be a list of all $\mathcal{L}$-structures in $\mathcal{K}$ (up to isomorphism). Suppose we have constructed $\mathcal{M}_i$. We construct $\mathcal{M}_{i+1}$   
taking cases depending on whether $i$ is even or odd. 
\begin{itemize}
\item If $i=2n$ is even, then we apply $e$-$JEP$ to $\mathcal{M}_i$ and $\mathcal{A}_n$. We set $\mathcal{M}_{i+1}$ to be the resulting $\mathcal{L}$-structure. 
\item If $i=2n+1$ is odd, then we apply strong $e$-$AP$ to $f:\bar{a}\rightarrow\mathcal{M}_i$, $Id:\bar{a}\rightarrow\mathcal{A}$, for some finitely generated $\mathcal{A}\prec_e\mathcal{M}_i$, some $\bar{a}\in\mathcal{A}$ 
and some partial elementary map $f$, to be specified later. We set $\mathcal{M}_{i+1}$ to be the resulting $\mathcal{L}$-structure.
\end{itemize}

The even cases take care of the equality between the elementary age of $\mathcal{M}$ and $\mathcal{K}$. 

In order to prove the strong $e$-Extension Property and consequently the 
homogeneity of $\mathcal{M}$ we need to ensure that for any finitely generated elementary substructure $\mathcal{A}$ of $\mathcal{M}$ and any partial 
elementary map $f:\bar{a}\rightarrow \mathcal{M}$ there exists an elementary embedding $g:\mathcal{A}\rightarrow \mathcal{M}$ that extends $f$. We may assume 
that $f(\bar{a})$ and $\mathcal{A}$ are in $\mathcal{M}_j$ for some $j<\omega$. Thus, we need to ensure that for some odd $i\geq j$, we have chosen the 
partial map $f$ to apply the strong $e$-$AP$. This is clearly possible since there are only countably many choices 
of partial elementary maps. 

The uniqueness result follows from an easy back-and-forth argument.

\end{proof}
   
 \subsection{Finitary extensions} \label{sec:2.4}
There is a straightforward generalization of the classical Fra\"{i}ss\'{e} result when we can only define ``special'' extensions of structures in a finitary way. A particular case 
is when special extensions can only be defined for say finite $\mathcal{L}$-structures. In this case it is not clear a priori how to give meaning to the notion of age of a countably infinite $\mathcal{L}$-structure 
with respect to special embeddings.   
In this subsection we record well known results that deal with this case. 

Let $\mathcal{K}$ be a class of finitely generated $\mathcal{L}$-structures. Suppose $\mathcal{A}\subseteq\mathcal{B}$ are in $\mathcal{K}$, we define a notion of special extension and denote it by 
$\mathcal{A}\sqsubseteq \mathcal{B}$. The notion of a special embedding between structures in $\mathcal{K}$ is defined in the natural way. 
If the $\mathcal{L}$-structure $\mathcal{M}$ is the union of a $\sqsubseteq$-chain  $M_1\sqsubseteq M_2\sqsubseteq\ldots\sqsubseteq M_n\sqsubseteq\ldots$ we will 
write $A\sqsubseteq M$ if $A\sqsubseteq M_i$ for some $i<\omega$. We will assume that $\sqsubseteq$ satisfies the following properties: 

For any $\mathcal{A}$, $\mathcal{B}$, $\mathcal{C}$ in $\mathcal{K}$:
\begin{itemize}
 \item (N1) the $\mathcal{L}$-structure $\mathcal{A}$ is a special extension of itself, i.e. $\mathcal{A}\sqsubseteq \mathcal{A}$;
 \item (N2) if $\mathcal{A}\sqsubseteq\mathcal{B}\sqsubseteq\mathcal{C}$, then $\mathcal{A}\sqsubseteq\mathcal{C}$; 
 \item (N3) if $\mathcal{A}\sqsubseteq\mathcal{C}$ and $\mathcal{A}\subseteq\mathcal{B}\subseteq\mathcal{C}$, then $\mathcal{A}\sqsubseteq\mathcal{B}$. 
\end{itemize}

The last property, (N3), guarantees that $\mathcal{A}\sqsubseteq\mathcal{M}$, as defined above, does not depend on the choice of the $\sqsubseteq$-chain 
$M_1\sqsubseteq M_2\sqsubseteq\ldots\sqsubseteq M_n\sqsubseteq\ldots$.

\begin{definition}\label{specialFraisse}
Let $\mathcal{K}$ be a countable (with respect to isomorphism types) non-empty class of finitely generated $\mathcal{L}$-structures that has countably many $\sqsubseteq$-embeddings between any pair of elements, 
with the following properties:
\begin{itemize}
 \item (IP) the class $\mathcal{K}$ is closed under isomorphisms;
 \item ($\sqsubseteq$-HP) the class $\mathcal{K}$ is closed under finitely generated elementary $\sqsubseteq$-substructures, i.e. if $\mathcal{A}\in \mathcal{K}$ and $\mathcal{B}$ is a finitely generated $\mathcal{L}$-structure such that 
 $\mathcal{B}\sqsubseteq \mathcal{A}$, then $\mathcal{B}\in\mathcal{K}$;
 \item ($\sqsubseteq$-JEP) if $\mathcal{A}_1$, $\mathcal{A}_2$ are in $\mathcal{K}$, then there is $\mathcal{B}$ in $\mathcal{K}$ 
 and $\sqsubseteq$-embeddings $f_i:\mathcal{A}_i\rightarrow \mathcal{B}$ for $i\leq 2$;
 \item ($\sqsubseteq$-AP) if $\mathcal{A}_0$, $\mathcal{A}_1$, $\mathcal{A}_2$ are in $\mathcal{K}$ and $f_i:\mathcal{A}_0\rightarrow \mathcal{A}_i$, for $i\leq 2$, are $\sqsubseteq$-embeddings,  
 then there is $\mathcal{B}$ in $\mathcal{K}$ and $\sqsubseteq$-embeddings $g_i:\mathcal{A}_i\rightarrow \mathcal{B}$ for $i\leq 2$ with $g_1\circ f_1=g_2\circ f_2$.  
\end{itemize}

Then $\mathcal{K}$ is a $\sqsubseteq$-Fra\"{i}ss\'{e} class. 
\end{definition}

Suppose $\mathcal{M}$ is the union of a $\sqsubseteq$-chain  $M_1\sqsubseteq M_2\sqsubseteq\ldots\sqsubseteq M_n\sqsubseteq\ldots$ of finitely generated $\mathcal{L}$-structures. Then the $\sqsubseteq$-age of 
$\mathcal{M}$, $\sqsubseteq$-$age(\mathcal{M})$, is the class of all $\sqsubseteq$-substructures of $\mathcal{M}$ up to isomorphism.

\begin{theorem}\label{th:2.14}
Let $\mathcal{K}$ be a $\sqsubseteq$-Fra\"{i}ss\'{e} class. Then there exists a countable $\mathcal{L}$-structure $\mathcal{M}$ such that:
\begin{itemize}
 \item the $\mathcal{L}$-structure $\mathcal{M}$ is the union of a $\sqsubseteq$-chain of $\mathcal{L}$-structures in $\mathcal{K}$;
 \item the $\sqsubseteq$-age of $\mathcal{M}$ is exactly $\mathcal{K}$; 
 \item the $\mathcal{L}$-structure $\mathcal{M}$ is $\sqsubseteq$-homogeneous, i.e. whenever $\mathcal{A}$, $\mathcal{B}$ are isomorphic finitely generated $\sqsubseteq$-substructures 
 then there is an automorphism of $\mathcal{M}$ extending this isomorphism;
\end{itemize}
Moreover, any other countable $\mathcal{L}$-structure with the above properties is isomorphic to $\mathcal{M}$.
\end{theorem}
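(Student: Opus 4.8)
The plan is to run the classical Fra\"iss\'e construction verbatim, with ordinary substructure and embedding replaced throughout by $\sqsubseteq$ and $\sqsubseteq$-embedding, and with the properties (N1)--(N3) used exactly where transitivity and heredity of ordinary substructure are used classically. Concretely, I would build $\mathcal M$ as the union of a $\sqsubseteq$-chain $M_1\sqsubseteq M_2\sqsubseteq\cdots$ of members of $\mathcal K$ by a dovetailing argument that addresses two kinds of tasks. Starting from any $M_1\in\mathcal K$ and having built $M_i\in\mathcal K$, I alternate: \emph{(age tasks)} enumerate $\mathcal K$ up to isomorphism as $\{\mathcal A_n\}_{n<\omega}$ and at suitable stages apply ($\sqsubseteq$-JEP) to $M_i$ and the next $\mathcal A_n$, letting $M_{i+1}$ be the amalgam and identifying $M_i$ with its image so that $M_i\sqsubseteq M_{i+1}$; \emph{(homogeneity tasks)} resolve every demand of the form ``$\mathcal A\sqsubseteq M_i$ and $f\colon\mathcal A\to\mathcal B$ is a $\sqsubseteq$-embedding with $\mathcal B\in\mathcal K$'' by later applying ($\sqsubseteq$-AP) to the inclusion $\mathcal A\hookrightarrow M_j$ (a $\sqsubseteq$-embedding by (N3) once $\mathcal A\subseteq M_j$ with $j\ge i$) and to $f$, taking $M_{j+1}$ to be the result and again identifying $M_j$ with its image. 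Since $\mathcal L$ is countable each $M_i$ is a countable structure, hence has only countably many finitely generated substructures; combined with the hypotheses that $\mathcal K$ is countable up to isomorphism and has only countably many $\sqsubseteq$-embeddings between any pair of its members, this makes the set of homogeneity tasks relative to a fixed $M_i$ countable, so the total set of tasks over all $i$ is countable and a standard dovetailing schedules each of them at some finite stage. Set $\mathcal M:=\bigcup_{i<\omega}M_i$; by (N1)--(N2) it is the union of a $\sqsubseteq$-chain of members of $\mathcal K$.

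For the verification, the $\sqsubseteq$-$age(\mathcal M)$ equals $\mathcal K$: it contains $\mathcal K$ by the age tasks, and is contained in $\mathcal K$ by ($\sqsubseteq$-HP), since $\mathcal A\sqsubseteq\mathcal M$ means $\mathcal A\sqsubseteq M_i$ with $M_i\in\mathcal K$. The key lemma extracted from the construction is a $\sqsubseteq$-\emph{extension property}: if $\mathcal A\sqsubseteq\mathcal A'\sqsubseteq\mathcal M$ with $\mathcal A,\mathcal A'$ finitely generated and $f\colon\mathcal A\to\mathcal M$ is a $\sqsubseteq$-embedding, then some $\sqsubseteq$-embedding $g\colon\mathcal A'\to\mathcal M$ extends $f$. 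Indeed, choose a stage $k$ with $f(\mathcal A)\subseteq M_k$ and $f(\mathcal A)\sqsubseteq M_k$; the isomorphism $f^{-1}\colon f(\mathcal A)\to\mathcal A\subseteq\mathcal A'$ is a $\sqsubseteq$-embedding, so ``$f(\mathcal A)\sqsubseteq M_k$, $f^{-1}\colon f(\mathcal A)\to\mathcal A'$'' is a homogeneity task, resolved at some later stage by a $\sqsubseteq$-embedding $g\colon\mathcal A'\to M_{k'+1}\subseteq\mathcal M$ with $g\circ f^{-1}$ equal to the inclusion on $f(\mathcal A)$, i.e. $g|_{\mathcal A}=f$, and with $g(\mathcal A')\sqsubseteq M_{k'+1}\sqsubseteq\mathcal M$. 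A routine back-and-forth over a countable enumeration of $\mathcal M$, using this property in both directions (the ``back'' step being its mirror image), then upgrades any isomorphism between finitely generated $\sqsubseteq$-substructures of $\mathcal M$ to an automorphism; here one uses that isomorphisms between members of $\mathcal K$ are $\sqsubseteq$-embeddings (by (N1) and isomorphism-invariance of $\sqsubseteq$) and that every finite tuple sits inside some finitely generated $\sqsubseteq$-substructure because $\mathcal M$ is the union of a $\sqsubseteq$-chain. This gives $\sqsubseteq$-homogeneity.

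For uniqueness, given another countable $\mathcal M'$ with the three properties, I would run a back-and-forth between $\mathcal M$ and $\mathcal M'$ whose approximations are isomorphisms between finitely generated $\sqsubseteq$-substructures. Start from $M_1\sqsubseteq\mathcal M$, which lies in $\mathcal K=\sqsubseteq$-$age(\mathcal M')$ and hence embeds onto a $\sqsubseteq$-substructure of $\mathcal M'$. To extend a current partial isomorphism $\phi\colon\mathcal A\to\mathcal A'$ by a new element $a\in\mathcal M$, pick a finitely generated $\mathcal A_1$ with $\mathcal A\cup\{a\}\subseteq\mathcal A_1\sqsubseteq\mathcal M$ (a term of the chain works), realize $\mathcal A_1$ as a $\sqsubseteq$-substructure of $\mathcal M'$ via an isomorphism $\psi$, and apply $\sqsubseteq$-homogeneity of $\mathcal M'$ to the isomorphism $\phi\circ\psi^{-1}$ between the finitely generated $\sqsubseteq$-substructures $\psi(\mathcal A)$ and $\mathcal A'$ of $\mathcal M'$; composing the resulting automorphism with $\psi$ yields a $\sqsubseteq$-embedding $\mathcal A_1\to\mathcal M'$ extending $\phi$ and covering $a$. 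The symmetric step covers elements of $\mathcal M'$, and the union of the chain of approximations is an isomorphism $\mathcal M\to\mathcal M'$.

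I expect the only genuinely delicate point to be the bookkeeping in the first paragraph: the homogeneity tasks are not given in advance but appear with each new $M_i$, so one must both verify that their totality is countable (this is where countability of $\mathcal L$, of $\mathcal K$ up to isomorphism, and of $\sqsubseteq$-embeddings between pairs all enter) and organize the schedule so that each task is met at a finite stage while the chain is still being constructed. Everything else is a faithful transcription of the classical Fra\"iss\'e argument, so I would present it briefly, flagging only the places where (N1)--(N3) replace the elementary closure properties of substructures.
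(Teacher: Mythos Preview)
Your proposal is correct and follows essentially the same approach as the paper: the paper does not prove Theorem~\ref{th:2.14} directly but instead proves the analogous strong $e$-Fra\"iss\'e theorem (stating that the arguments for the other variants are identical), and that proof is the same alternating JEP/AP chain construction followed by back-and-forth that you describe. Your write-up is in fact more detailed than the paper's---in particular on uniqueness, which the paper dismisses in one line---and you correctly flag where (N1)--(N3) are invoked.
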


Note, that Theorem \ref{forallFraisse} is a special case of the theorem above.

All definitions and results in this subsection go through in the category of relatively finitely generated $\mathcal{L}$-structures, i.e. when all $\mathcal{L}$-structures considered  
are extensions of a fixed $\mathcal{L}$-structure $\mathcal{N}$ and finitely generated with respect to it, moreover all special embeddings between them fix $\mathcal{N}$ point-wise.  


\section{The class of limit groups}

Limit groups have been introduced by Sela in \cite{Sela1}. It turned out that the class of limit groups coincides with the long studied class of finitely generated fully residually 
free groups introduced by Baumslag in \cite{B}. A group $G$ is {\em fully residually free} if for any finite subset $X$ of $G$ there exists a homomorphism from $G$ to a free group that 
is injective on $X$. In \cite{KM} (see also \cite{Sela1}) it was proved that limit groups are finitely presented, thus the class of limit groups is countable. 
One can characterize limit groups in terms of first-order logic: they are the finitely generated models of the universal theories of free groups (abelian and nonabelian). 
The class of abelian limit groups is, in particular, the class of finitely generated free abelian groups. 

Limit groups do not form a Fra\"{i}ss\'{e} class because, although they satisfy the $HP$ and $JEP$ conditions, they do not satisfy the $AP$ condition. To see this consider the groups 
$\Z:=\langle z\rangle$, $\F_2:=\langle a,b\rangle$, $\F_2':=\langle c,d\rangle$ and the embeddings $f_1:\Z\rightarrow \F_2$, given by $z\mapsto a^2b^2$, and $f_2:\Z\rightarrow \F_2'$, given by $z\mapsto c^2$. 
The amalgamated free product $\F_2*_{\Z}\F_2'$ using the maps $f_1, f_2$ is the free product $\langle a, b, c \  | \ a^2b^2=c^2 \rangle * \langle d\rangle$. This free product is not a limit group 
since $\langle a,b, c \  | \ a^2b^2=c^2 \rangle$ is not a limit group. The latter is true because in limit groups, by a result of Lyndon for free groups, whenever the product of three squares is trivial, then the elements of the product commute. 
Now we can see that the class of limit groups does not have $AP$, 
since for any group $G$ and any $g_1:\F_2\rightarrow G$, $g_2:\F_2'\rightarrow G$ such that $g_1\circ f_1=g_2\circ f_2$, we get that $g_1\circ f_1(z)= g_1(a)^2g_1(b)^2=g_2(c)^2=g_2\circ f_2(z)$. But the latter 
implies that $g_1(a)$, $g_1(b)$, $g_2(c)$ commute in $G$, hence $a$, $b$, commute in $\F_2$, a contradiction.

\subsection{Free Abelian groups}

In this subsection we show that the class of finitely generated free abelian groups forms a strong $\forall$-Fra\"{i}ss\'{e} class. We remark that in this class the notions of  
pure subgroup, existentially closed subgroup and direct factor coincide. All properties follow rather easily, therefore we only give the details for proving the strong $\forall$-AP. 

\begin{lemma}\label{Pure}
Let $\bar{b}\in \Z^n$ and $\bar{c}\in \Z^m$ such that $tp_{\forall}^{\Z^n}(\bar{b})=tp_{\forall}^{\Z^m}(\bar{c})$. Then, the isomorphism between $\langle\bar{b}\rangle$ 
and $\langle\bar{c}\rangle$ extends to the smallest pure subgroups that contain $\bar{b}$ and $\bar{c}$ respectively. 
\end{lemma}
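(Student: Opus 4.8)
The plan is to work directly with the structure theory of finitely generated free abelian groups and reduce everything to linear algebra over $\Z$ and $\mathbb{Q}$. First I would observe that since $\Z^n$ and $\Z^m$ are torsion-free, the subgroups $\langle\bar b\rangle$ and $\langle\bar c\rangle$ are themselves free abelian of some rank $r$, and the hypothesis $tp_\forall^{\Z^n}(\bar b)=tp_\forall^{\Z^m}(\bar c)$ gives in particular that every $\Z$-linear dependence $\sum k_i x_i = 0$ (a universal, in fact atomic, statement) holding among the $b_i$ holds among the $c_i$ and conversely; hence the assignment $b_i\mapsto c_i$ is a well-defined group isomorphism $\phi\colon\langle\bar b\rangle\to\langle\bar c\rangle$, and after tensoring with $\mathbb{Q}$ we get a linear isomorphism of the $\mathbb{Q}$-spans of the same rank $r$.

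Next I would pin down what the ``smallest pure subgroup containing $\bar b$'' is: it is the \emph{isolator} $P_{\bar b}=\{x\in\Z^n : kx\in\langle\bar b\rangle \text{ for some } k\ge 1\}$, equivalently $\mathbb{Q}\langle\bar b\rangle\cap\Z^n$, which is a direct summand of $\Z^n$ of rank $r$; similarly for $P_{\bar c}\le\Z^m$. The core of the argument is to extend $\phi$ to an isomorphism $\Phi\colon P_{\bar b}\to P_{\bar c}$. The natural candidate is $\Phi=(\phi\otimes\mathbb{Q})|_{P_{\bar b}}$, so the real content is the \emph{integrality} claim: $\phi\otimes\mathbb{Q}$ carries $P_{\bar b}$ onto $P_{\bar c}$ (not just into the $\mathbb{Q}$-span of $\bar c$). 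For this I would take $x\in P_{\bar b}$, so $kx = \sum m_i b_i$ for some $k\ge 1$ and integers $m_i$, and I need to see that the corresponding element $y$ of $\mathbb{Q}\langle\bar c\rangle$ with $ky=\sum m_i c_i$ actually lies in $\Z^m$. The statement ``$\exists x\,(kx=\sum m_i b_i)$'' is existential, not universal, so it is \emph{not} directly transferred by equality of universal types — this is the main obstacle and where the argument must be done with care. The way around it: divisibility information \emph{is} expressible universally in a pair $(\Z^n,\bar b)$ once one knows the group is free abelian of finite rank, because ``$\sum m_i b_i$ is not divisible by $k$ modulo any proper relation'' can be phrased as the negation of an existential... which is still not universal. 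So instead I would use the symmetry of the hypothesis together with the fact that $\langle\bar b\rangle$ has finite index in $P_{\bar b}$: choose that index $N$, note $N\cdot P_{\bar b}\subseteq\langle\bar b\rangle\subseteq P_{\bar b}$, and transport the finitely many coset representatives; the key point to extract from $tp_\forall$-equality is that for each fixed $k$, the set of tuples $(m_i)$ with $\sum m_i b_i\in k\Z^n$ coincides with the set of $(m_i)$ with $\sum m_i c_i\in k\Z^m$ — and this membership $\sum m_i b_i\in k\Z^n$ \emph{is} universally expressible, since in a free abelian group ``$v$ is divisible by $k$'' is equivalent to the quantifier-free condition obtained after fixing a basis, but more robustly it is equivalent to ``for all $w$, $kw\ne v$ fails'' — again existential.

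Given the delicacy, the clean route I would actually write up is: (1) pass to $\mathbb{Q}$-spans $V_{\bar b}=\mathbb{Q}\langle\bar b\rangle$, $V_{\bar c}=\mathbb{Q}\langle\bar c\rangle$ and note $\phi\otimes\mathbb{Q}\colon V_{\bar b}\to V_{\bar c}$ is a $\mathbb{Q}$-isomorphism; (2) recall $P_{\bar b}=V_{\bar b}\cap\Z^n$ and that the quotient $P_{\bar b}/\langle\bar b\rangle$ is finite of some order $d_{\bar b}$, with the structure of this finite group (its invariant factors) being the cokernel of the matrix whose columns express $\bar b$ in a basis of $P_{\bar b}$ — i.e. determined by the Smith normal form, hence by the elementary divisors of $\bar b$ inside $\Z^n$; (3) show these elementary divisors are read off from $tp_\forall^{\Z^n}(\bar b)$: the $j$-th elementary divisor $e_j$ is characterized by the universal sentences asserting which integer linear combinations of $\bar b$ are, and are not, divisible by which integers, and crucially both the positive and the negative such facts are consequences of $tp_\forall$ because in $\Z^n$ the predicate ``$k \mid v$'' on $v=\sum m_i b_i$ is, for each fixed $k$ and $(m_i)$, equivalent to a \emph{quantifier-free} condition — there is a universal formula $\psi_{k,\bar m}(\bar x)$ true of $\bar b$ iff $k\mid\sum m_i b_i$, and also its negation $\neg\psi$ is universal, so $tp_\forall$-equality transfers both; hence $P_{\bar b}/\langle\bar b\rangle\cong P_{\bar c}/\langle\bar c\rangle$ compatibly with $\phi$; (4) conclude that $\phi\otimes\mathbb{Q}$ restricts to an isomorphism $P_{\bar b}\xrightarrow{\ \sim\ }P_{\bar c}$ extending $\phi$, which is the desired extension. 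The only genuinely subtle point, which I would flag and argue explicitly, is step (3)'s claim that divisibility facts (both affirmative and negative) about integer combinations of $\bar b$ in $\Z^n$ are captured by $tp_\forall^{\Z^n}(\bar b)$; this holds because $\Z^n$ embeds in $\mathbb{Q}^n$ and divisibility by a fixed integer is a quantifier-free statement once one clears denominators, or alternatively because the universal theory of $\Z^n$ (pairs with parameters $\bar b$) already decides the isomorphism type of the pair $\langle\bar b\rangle\hookrightarrow\Z^n$ up to the relevant data. With step (3) in hand the rest is routine, so I expect the write-up to spend most of its words justifying that divisibility is $tp_\forall$-definable here, and only a sentence each on the linear-algebra bookkeeping.
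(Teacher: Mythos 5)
The paper leaves Lemma~\ref{Pure} unproved (it is one of the properties declared to ``follow rather easily''), so there is nothing to compare your route against; judging it on its own terms, your overall strategy is the right one: extend $\phi\colon b_i\mapsto c_i$ over $\mathbb{Q}$ and check that it carries the isolator $P_{\bar b}=\mathbb{Q}\langle\bar b\rangle\cap\Z^n$ onto $P_{\bar c}$, the whole lemma reducing to the divisibility transfer you isolate. But the justification you give for that transfer --- the one step you yourself flag as the crux --- is wrong as written. Divisibility by a fixed $k$ is \emph{not} equivalent to a quantifier-free condition in the language of groups: the elements $1$ and $2$ of $\Z$ satisfy exactly the same word equations $nx=0$, yet only one is even, so no quantifier-free $\chi(x)$ can detect $2\mid x$. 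And the negation of a universal formula is existential, not universal, so the assertion ``there is a universal formula $\psi$ true of $\bar b$ iff $k\mid\sum m_ib_i$, and also its negation $\neg\psi$ is universal'' cannot stand.

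The correct justification is shorter and rests on the \emph{equality} (not mere inclusion) of the two universal types. The formula $\theta_{k,\bar m}(\bar x):=\forall y\,(ky\neq \sum m_ix_i)$ is universal and expresses non-divisibility of $\sum m_ix_i$ by $k$. Since $tp_\forall^{\Z^n}(\bar b)=tp_\forall^{\Z^m}(\bar c)$ as sets of formulas, $\theta_{k,\bar m}$ lies in one type iff it lies in the other; negating both sides of this biconditional gives $k\mid\sum m_ib_i$ in $\Z^n$ iff $k\mid\sum m_ic_i$ in $\Z^m$, for every $k$ and every integer tuple $\bar m$. With this in hand, for $x\in P_{\bar b}$ with $kx=\sum m_ib_i$ there is a (unique, by torsion-freeness) $y\in\Z^m$ with $ky=\sum m_ic_i$; well-definedness of $x\mapsto y$ follows from the transfer of the quantifier-free relations among the $b_i$ and $c_i$, additivity and injectivity are clear, and surjectivity onto $P_{\bar c}$ follows by the symmetric argument. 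This also renders the detour through Smith normal forms, elementary divisors, finite index and coset representatives in your steps (2)--(3) unnecessary: once divisibility transfers in both directions, the restriction of $\phi\otimes\mathbb{Q}$ to $P_{\bar b}$ is already the desired extension.
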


\begin{proposition}
The class of finitely generated free abelian groups has the strong $\forall$-AP property. 
\end{proposition}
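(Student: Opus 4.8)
The plan is to translate the model theory into module theory, reduce the statement to Lemma \ref{Pure}, and then amalgamate the two given groups along a common pure closure, using the fact that in this category every finitely generated pure subgroup is a direct summand. By the remark preceding Lemma \ref{Pure}, for finitely generated free abelian groups the notions of $\forall$-substructure, existentially closed subgroup and direct factor all coincide; in particular a $\forall$-embedding $\mathcal{A}\to_{\forall}\mathcal{B}$ inside $\mathcal{K}$ is, up to isomorphism, the inclusion of $\mathcal{A}$ as a direct summand of $\mathcal{B}$. So it suffices, given $\mathcal{A}_0,\mathcal{A}_1,\mathcal{A}_2$ in $\mathcal{K}$ and partial $\forall$-maps $f_i:\bar a\to_{\forall}\mathcal{A}_i$ of a tuple $\bar a\in\mathcal{A}_0$, to produce a finitely generated free abelian group $\mathcal{B}$ together with embeddings $g_i:\mathcal{A}_i\to\mathcal{B}$ onto direct summands such that $g_1(f_1(\bar a))=g_2(f_2(\bar a))$.

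Write $\bar b:=f_1(\bar a)\in\mathcal{A}_1$ and $\bar c:=f_2(\bar a)\in\mathcal{A}_2$. The first step is to observe that $\bar b$ and $\bar c$ carry the same $\forall$-type: since $f_1$ and $f_2$ are partial $\forall$-maps of one and the same tuple $\bar a$, they preserve and reflect all $\Pi_1^0$-formulas, and hence
$tp_{\forall}^{\mathcal{A}_1}(\bar b)=tp_{\forall}^{\mathcal{A}_0}(\bar a)=tp_{\forall}^{\mathcal{A}_2}(\bar c)$. In particular every quantifier-free sentence over the tuple is a $\Pi_1^0$-sentence, so $\bar b$ and $\bar c$ have the same quantifier-free type, i.e. the same $\Z$-linear relations, and therefore $b_i\mapsto c_i$ extends to an isomorphism $\la\bar b\ra\to\la\bar c\ra$. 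Now I apply Lemma \ref{Pure} to $\bar b$ and $\bar c$: this isomorphism extends to an isomorphism $\alpha$ between the pure closure $\bar H_1$ of $\la\bar b\ra$ in $\mathcal{A}_1$ and the pure closure $\bar H_2$ of $\la\bar c\ra$ in $\mathcal{A}_2$, with $\alpha(b_i)=c_i$ for all $i$.

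It remains to amalgamate. Since $\bar H_1$ is pure in the finitely generated free abelian group $\mathcal{A}_1$, the quotient $\mathcal{A}_1/\bar H_1$ is torsion-free, hence free, so the short exact sequence splits and $\mathcal{A}_1=\bar H_1\oplus D_1$ for some finitely generated free abelian $D_1$; likewise $\mathcal{A}_2=\bar H_2\oplus D_2$. Identifying $\bar H_1$ with $\bar H_2$ along $\alpha$ and denoting the common group by $\mathcal{C}$, put $\mathcal{B}:=\mathcal{C}\oplus D_1\oplus D_2$, which is finitely generated free abelian and so lies in $\mathcal{K}$, and let $g_1:\mathcal{A}_1=\mathcal{C}\oplus D_1\hookrightarrow\mathcal{B}$ and $g_2:\mathcal{A}_2=\mathcal{C}\oplus D_2\hookrightarrow\mathcal{B}$ be the evident inclusions onto direct summands; by the translation in the first paragraph these are $\forall$-embeddings. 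Finally, $\bar b\in\la\bar b\ra\subseteq\bar H_1=\mathcal{C}$ and $\bar c\in\bar H_2=\mathcal{C}$, and they are identified in $\mathcal{C}$ by $\alpha$, so $g_1(f_1(\bar a))=g_1(\bar b)=\bar b=\bar c=g_2(\bar c)=g_2(f_2(\bar a))$ inside $\mathcal{B}$, which is exactly the strong $\forall$-AP conclusion.

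I expect the only genuinely delicate point to be the first step, namely extracting from the bare hypothesis ``$f_1$ and $f_2$ are partial $\forall$-maps of a common tuple $\bar a$'' that $\bar b$ and $\bar c$ end up with the \emph{same} $\forall$-type rather than merely each dominating that of $\bar a$; this is the precise place where those hypotheses are used, and it is what makes Lemma \ref{Pure} applicable. Everything afterwards is the soft ``pushout of direct summands'' argument, powered entirely by Lemma \ref{Pure} together with the structure theory of finitely generated free abelian groups (subgroups and torsion-free quotients are again free, and pure subgroups are direct summands); the remaining verifications — that $\mathcal{B}\in\mathcal{K}$, that $g_1,g_2$ are $\forall$-embeddings, and that $\bar b$ may be taken inside the $\bar H_1$-summand — are routine.
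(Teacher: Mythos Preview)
Your proof is correct and follows essentially the same route as the paper's: derive $tp_{\forall}^{\mathcal{A}_1}(\bar b)=tp_{\forall}^{\mathcal{A}_2}(\bar c)$ from the partial $\forall$-map hypothesis, invoke Lemma~\ref{Pure} to get an isomorphism between the pure closures, split each $\mathcal{A}_i$ as (pure closure) $\oplus$ (complement), and amalgamate as the direct sum of the common pure closure with both complements. Your write-up is simply more explicit about why pure closures are direct summands and why the resulting inclusions are $\forall$-embeddings; the underlying construction $\mathcal{B}=\mathcal{C}\oplus D_1\oplus D_2$ is exactly the paper's $D=\Z_1\times B\times \Z_2$.
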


\begin{proof}
Let $\bar{a}$ be a tuple from $\Z^m$ and $f_i:\bar{a}\rightarrow_{\forall} \Z^{m_i}$ be partial $\forall$-maps for $i=1,2$. Then $tp_{\forall}^{\Z^{m_1}}(\bar{b})=tp_{\forall}^{\Z^{m_2}}(\bar{c})$ where $\bar{b}=f_1(\bar{a})$ and 
$\bar{c}=f_2(\bar{a})$. By the previous lemma there is a direct factorization of $\Z^{m_1}$ as $B\times \Z_1$ and one of $\Z^{m_2}$ as $C\times \Z_2$ such that 
$f:C\cong B$ with $f(\bar{c})=\bar{b}$. Consider the group $D= \Z_1\times B \times \Z_2$ and the embeddings $g_1=Id$ and $g_2=(f,Id)$, where $f$ is the isomorphism from $C$ to $B$. 
The embeddings are in particular $\forall$-embeddings and moreover $g_1\circ f_1(\bar{a})= g_2\circ f_2(\bar{a})$. 
\end{proof}

Thus, we have the following theorem. 

\begin{theorem}\label{FreeAbelianFraisse}
There exists a countable group $G$ with the following properties: 
\begin{itemize} 
 \item the $\forall$-$age$ of $G$ is the class of finitely generated free abelian groups;
 \item the group $G$ is  $\forall$-homogeneous; 
 \item the group $G$ is the union of a $\forall$-chain of finitely generated free abelian groups.
\end{itemize}
Moreover, any countable group with the above properties is isomorphic to $G$.
\end{theorem}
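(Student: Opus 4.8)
The plan is to deduce Theorem~\ref{FreeAbelianFraisse} from the general machinery of the previous section, namely the existence and uniqueness theorem for strong $\forall$-Fra\"{i}ss\'{e} classes (the strong $\forall$-analogue of Theorem~\ref{th:2.14}), applied to the class $\mathcal{K}$ of finitely generated free abelian groups. So the real content is to verify that $\mathcal{K}$ is a strong $\forall$-Fra\"{i}ss\'{e} class, i.e. to check the four axioms (IP), ($\forall$-HP), ($\forall$-JEP), (strong $\forall$-AP), and then to note that the abstract theorem hands us a countable group $G$ with exactly the three listed properties together with uniqueness.

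First I would dispose of the easy axioms. Countability (up to isomorphism) is clear since the $\mathbb{Z}^n$ are classified by $n$. (IP) is trivial. For ($\forall$-HP) one uses the remark preceding the lemma that in this class a finitely generated $\forall$-substructure of $\mathbb{Z}^n$ is a pure subgroup, hence a direct factor, hence again free abelian of smaller rank, so it stays in $\mathcal{K}$; one should also observe that a subgroup of $\mathbb{Z}^n$ is finitely generated automatically, so there is no subtlety there. For ($\forall$-JEP), given $\mathbb{Z}^{m_1}$ and $\mathbb{Z}^{m_2}$ one takes $\mathcal{B}=\mathbb{Z}^{m_1}\times\mathbb{Z}^{m_2}$ with the two coordinate inclusions; these are split inclusions onto direct factors, hence pure, hence $\forall$-embeddings. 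The substantive axiom, (strong $\forall$-AP), is exactly the Proposition proved just above via Lemma~\ref{Pure}, so I would simply cite it.

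With the four axioms in hand, the abstract strong $\forall$-Fra\"{i}ss\'{e} theorem produces a countable $\mathcal{L}$-structure $G$ which is the union of a $\forall$-chain of members of $\mathcal{K}$, whose $\forall$-age is exactly $\mathcal{K}$, and which is $\forall$-homogeneous; and it is unique up to isomorphism among countable structures with these three properties. Translating ``member of $\mathcal{K}$'' as ``finitely generated free abelian group'' gives precisely the three bullet points and the moreover clause of the theorem. I would also remark, for cleanliness, that $G$ is indeed a group and not merely an $\mathcal{L}$-structure in the language of groups: being a union of a chain of (free abelian) groups along group embeddings, it is a group, in fact a torsion-free abelian group of countable rank — informally one checks it is $\mathbb{Q}^{(\omega)}$-like but I would not belabor the algebraic identification since the paper explicitly declines to analyze the algebraic structure of these limits.

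The only point requiring a little care — and the place I would expect a referee to push — is the relationship between ``$\forall$-substructure'' and ``pure subgroup/direct factor'' underpinning both ($\forall$-HP) and the verification that the embeddings in ($\forall$-JEP) and in the strong $\forall$-AP proof are genuinely $\forall$-embeddings rather than mere embeddings; this is the asserted coincidence of the three notions in the opening remark of the subsection, and I would make sure Lemma~\ref{Pure} (or a short standalone observation) is invoked to justify it, since universal formulas in the language of groups over $\mathbb{Z}^n$ detect exactly divisibility/primitivity data. Everything else is routine, and the abstract theorem does the combinatorial back-and-forth for us.
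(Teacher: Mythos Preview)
Your approach is essentially identical to the paper's: verify that finitely generated free abelian groups form a strong $\forall$-Fra\"{i}ss\'{e} class (with strong $\forall$-AP supplied by the Proposition via Lemma~\ref{Pure}, the other axioms being easy), then invoke the abstract existence/uniqueness theorem. One minor correction to your aside: the strong $\forall$-Fra\"{i}ss\'{e} limit here is $\Z^{(\omega)}$, not something $\mathbb{Q}^{(\omega)}$-like; $\mathbb{Q}^{(\omega)}$ arises as the ordinary Fra\"{i}ss\'{e} limit of the same class under arbitrary embeddings, and the paper does make both identifications explicitly right after the theorem.
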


It is not hard to see in this case that the strong $\forall$-Fra\"{i}ss\'{e} limit is $\Z^{(\omega)}$, the direct sum of $\omega$ copies of $\Z$. 

As a matter of fact the class of finitely generated free abelian groups forms a Fra\"{i}ss\'{e} class. In this case the Fra\"{i}ss\'{e} limit is $\mathbb{Q}^{(\omega)}$, 
the direct sum of $\omega$ copies of $\mathbb{Q}$.

\subsection{Nonabelian limit groups}

In this subsection we will make use of some basic constructions in combinatorial group theory, in particular amalgamated free products and HNN-extensions. We refer the reader 
to the classical books \cite{Magnus}, \cite{LySch}.

Nonabelian limit groups in contrast to the abelian limit groups do not form a Fra\"{i}ss\'{e} class. In particular the $HP$ condition fails.

We now prove that the class $\mathcal{F}$ of nonabelian limit groups forms a $\forall$-Fra\"{i}ss\'{e} class. The $\forall$-HP condition is rather obvious, since a finitely generated 
subgroup of a limit group is itself a limit group, in addition an existentially closed subgroup of a nonabelian group cannot be abelian. 

It will be useful to remark that in the class of limit groups one can characterize $\forall$-embeddings in the following way:

\begin{lemma}{\cite[Proposition 3]{KMeq}}\label{Retraction}
Let $L, M$ be limit groups. Then $L\leq_{\forall} M$ if and only if for every finite subset $X\subset M$, there exists a retraction $r_X:M\rightarrow L$ that is injective on $X$.   
\end{lemma}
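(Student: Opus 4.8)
The plan is to prove the two implications separately, using the remark from Section~\ref{Pre} that $L\leq_{\forall} M$ is equivalent to $L$ being existentially closed in $M$ (since $L$ is a subgroup of $M$).

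For the direction from right to left, suppose every finite $X\subseteq M$ admits a retraction $r_X\colon M\to L$ injective on $X$, and let $\exists\bar x\,\phi(\bar x)$ be an existential sentence with parameters $\bar a$ from $L$ that holds in $M$, witnessed by a tuple $\bar m$. I would put $\phi$ in disjunctive normal form, so that $\bar m$ satisfies a single disjunct, a conjunction of equalities $w_i(\bar m,\bar a)=1$ and inequalities $w_j(\bar m,\bar a)\neq 1$ for finitely many group words. Applying the hypothesis to the finite set $X$ consisting of $1$, the entries of $\bar m$ and $\bar a$, and the elements $w_j(\bar m,\bar a)$: the retraction $r_X$ is a homomorphism fixing $L$ pointwise, so it preserves the equalities, and it is injective on $X$, so from $w_j(\bar m,\bar a)\neq 1=r_X(1)$ it preserves the inequalities; hence $r_X(\bar m)$ witnesses $\exists\bar x\,\phi(\bar x)$ in $L$. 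This shows $L$ is existentially closed in $M$, and this direction is routine.

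For the direction from left to right, the key point is to express ``there is a retraction $M\to L$ injective on $X$'' by a single existential formula over $L$. Assuming $L\leq_{\forall} M$ and fixing a finite $X\subseteq M$, I would use that every limit group is finitely presented (the cited result of \cite{KM}): after enlarging a finite generating set to contain $X$, write $M=\langle m_1,\dots,m_k\mid \rho_1,\dots,\rho_p\rangle$, and, since $L$ is finitely generated and lies in $M$, fix generators $a_j=v_j(\bar m)$ of $L$. A choice of images $\bar z=(z_1,\dots,z_k)\in L$ defines a homomorphism $M\to L$ exactly when it kills the relators $\rho_q$; it is a retraction onto $L$ precisely when in addition $v_j(\bar z)=a_j$ for all $j$; and it is injective on $X$ precisely when $z_{i_s}\neq z_{i_t}$ whenever $m_{i_s},m_{i_t}$ are distinct elements of $X$. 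Conjoining these finitely many equalities and inequalities yields a quantifier-free formula $\psi(\bar z,\bar a)$ with parameters in $L$, and $M\models\exists\bar z\,\psi(\bar z,\bar a)$ with witness $\bar z=\bar m$. Existential closedness of $L$ in $M$ then produces a witness $\bar y$ in $L$, which defines the required retraction $r_X$.

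The main obstacle is ensuring, in the second direction, that ``retraction onto $L$, injective on $X$'' is genuinely first-order and existential over $L$: this relies on $M$ being finitely presented (only finitely many relators), on $L$ being finitely generated (only finitely many constraints $v_j(\bar z)=a_j$), and on $X$ being finite (only finitely many inequalities). All three finiteness facts hold in the category of limit groups, so once the encoding is in place the conclusion is immediate from existential closedness; no genuinely new structural input about limit groups is needed beyond finite presentability.
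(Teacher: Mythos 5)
Your proof is correct. Note that the paper does not prove this lemma at all: it is imported verbatim as Proposition~3 of \cite{KMeq}, so there is no in-paper argument to compare against. Your two directions are the standard ones for such ``discriminating retraction'' characterizations. The right-to-left direction (retractions $\Rightarrow$ existentially closed) is handled correctly; the one point that must not be skipped --- and you did not skip it --- is putting $1$ and the elements $w_j(\bar m,\bar a)$ into $X$ so that injectivity of $r_X$ on $X$ converts the inequations into inequations over $L$. The left-to-right direction is where the real content lies, and your encoding is the right one: finite presentability of $M$ (from \cite{KM}, quoted in the paper), finite generation of $L$, and finiteness of $X$ together make ``$\bar z$ defines a retraction $M\to L$ injective on $X$'' a quantifier-free condition $\psi(\bar z,\bar a)$ over $L$, witnessed in $M$ by the generating tuple itself, so existential closedness hands you a witness in $L$. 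Two small points you should make explicit if you write this up: first, that adjoining the finitely many elements of $X$ to a finite presentation (via Tietze transformations) again yields a finite presentation; second, that the witness $\bar y$ fixes the generators $a_j=v_j(\bar m)$ of $L$ and hence fixes $L$ pointwise, which is what makes the resulting homomorphism a retraction rather than merely an endomorphism landing in $L$. With those remarks your argument is a complete, self-contained proof of the cited fact.
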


We also recall that abelian subgroups of limit groups are finitely generated and thus in particular they are free abelian (cf. \cite[Proposition 1]{KMeq}).

For proving the $\forall$-JEP and $\forall$-AP conditions we will use the following theorems. We recall that an {\em extension of a centralizer} of a group $G$ is 
the group $\langle G, t \ | \ [c, t]=1, c\in C_{G}(u)\rangle$. A {\em finite iterated extension of centralizers} of a group is obtained by finitely many applications of the previous 
construction.

\begin{theorem}[\cite{KM}]\label{CentralizerExtension}
Let $L$ be a limit group (respectively nonabelian limit group). 
Then $L$ embeds into some finite iterated extension of centralizers of a free group (respectively nonabelian free group).   
\end{theorem}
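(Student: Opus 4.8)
\textbf{Proof plan for Theorem \ref{CentralizerExtension} (embedding limit groups into iterated centralizer extensions).}

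The plan is to use the fact that a nonabelian limit group $L$ is fully residually free together with its finite presentability. First I would recall the structural description of limit groups as iterated generalized doubles, or — what is most convenient here — Sela's/Kharlampovich--Myasnikov's hierarchy: every limit group $L$ admits a finite \emph{Makanin--Razborov}-type decomposition, and more to the point, every limit group occurs as a subgroup of a group in a finite chain obtained from a free group by gluing along maximal abelian subgroups. The cleanest route is to invoke the \emph{embedding theorem} for limit groups: $L$ embeds into a group $NTQ$ (a coordinate group of a nondegenerate triangular quasi-quadratic system), and such $NTQ$ groups are exactly the finite iterated extensions of centralizers of free groups when the quadratic parts are trivial; in the general case one reduces the quadratic blocks to abelian blocks via the standard trick of passing to a larger iterated centralizer extension that contains all the surface-group pieces. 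Concretely, I would proceed by induction on the height of the abelian JSJ / cyclic hierarchy of $L$.

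The key steps, in order, are: (1) Take a Makanin--Razborov / JSJ decomposition of $L$; the bottom level is a free group, and each successive level is obtained by an amalgamated product or HNN extension along an abelian (hence free abelian, by the remark citing \cite[Proposition 1]{KMeq}) subgroup. (2) Observe that each such one-edge abelian extension, say $\langle A, t \mid [c,t]=1,\ c\in C\rangle$ with $C$ a maximal cyclic (or free abelian) subgroup of the already-constructed group, is literally an extension of a centralizer — or, when the edge group has rank $>1$, a finite iterate of extensions of centralizers, since one adjoins the free abelian group $C\times\mathbb{Z}^k$ one $\mathbb{Z}$ at a time, each time extending the centralizer of the relevant element. (3) Deal with quadratic (surface) vertex groups: a surface group with boundary embeds into a free group, and a closed surface group of Euler characteristic $\le -2$ embeds into a double of a free group along a cyclic subgroup, which is again a single extension of a centralizer (this is exactly where Baumslag's original construction of a nonfree fully residually free group appears). (4) Assemble: by induction the lower piece embeds into a finite iterated centralizer extension $E$ of a free group; then pushing the top-level edge and vertex data along this embedding, one extends $E$ by finitely many further centralizer extensions to accommodate the new generators and relations, giving the desired $E' \supseteq L$. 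The nonabelian case follows since if $L$ is nonabelian the base free group can be taken nonabelian (and a nonabelian limit group cannot embed into an abelian iterated centralizer extension of $\mathbb{Z}$).

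The main obstacle I expect is step (3), the handling of quadratically hanging (surface-type) vertex groups: one must verify that after embedding the surface piece into a centralizer extension, the gluing along boundary curves is compatible with the ambient iterated-centralizer structure — i.e.\ that the boundary subgroups land on suitable centralizers — so that the amalgam of the surface piece with the rest is itself realized inside one common finite iterated extension of centralizers. This is a bookkeeping-heavy argument about which elements become the ``$c$'' in each new extension $\langle G,t\mid [c,t]=1,\ c\in C_G(u)\rangle$, and it is precisely the technical heart of the Kharlampovich--Myasnikov proof in \cite{KM} that we are quoting; the rest of the argument is the relatively routine induction on hierarchy height together with the elementary observation that adjoining a higher-rank free abelian edge group decomposes into finitely many single centralizer extensions.
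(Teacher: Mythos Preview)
The paper does not give its own proof of Theorem~\ref{CentralizerExtension}: this is stated as a citation of the result from \cite{KM} and is used as a black box in the subsequent arguments (Lemma~\ref{forallCentralizers}, Proposition~\ref{forallJEP}, Proposition~\ref{forallAP}). So there is nothing in the present paper to compare your proposal against.

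That said, your sketch is a fair high-level summary of the strategy in the original Kharlampovich--Myasnikov paper \cite{KM}: one passes through the NTQ (nondegenerate triangular quasi-quadratic) coordinate groups, and the embedding of a limit group into an iterated centralizer extension of a free group is established by climbing the hierarchy, with the quadratic (surface-type) pieces handled separately. You correctly flag step~(3) --- the compatibility of boundary subgroups of surface pieces with the ambient centralizer structure --- as the genuinely delicate point, and you are right that this is where the substantial work in \cite{KM} lies. For the purposes of the present paper, however, you should simply cite the result rather than attempt to reprove it; the authors do exactly that.
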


For notation purposes 
when $K$ is a subgroup of both $L$ and $M$ we call a map from $L$ to $M$ a $K$-map if it is the identity on $K$.

\begin{theorem}[\cite{KhMyasn:2012}]\label{CentralizerExtensionParameters}
Let $L, M$ be limit groups and $L\leq_{\forall} M$. Then $M$ $L$-embeds into some finite iterated extension of centralizers of $L$. 
\end{theorem}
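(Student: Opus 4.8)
The plan is to induct on an appropriate measure of the complexity of $M$ over $L$ and, at each step, to peel off a single extension of a centralizer. The starting point is Lemma \ref{Retraction}: from $L\leq_{\forall}M$ we obtain, for every finite $X\subset M$, a retraction $r_X:M\rightarrow L$ injective on $X$. In particular $M$ is discriminated into $L$ by retractions, so $M$ is a ``limit group over $L$'' and inherits the relevant structure theory (finite abelian JSJ decompositions, finitely generated free abelian abelian subgroups, the CSA property), now taken relative to the distinguished subgroup $L$. Note also that any finitely generated subgroup of $M$ containing $L$ is again a limit group containing $L$ as a $\forall$-subgroup, so the inductive hypothesis will stay available.

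First I would record the base case: if $M=L$ there is nothing to prove, and if $M$ is a free product $L*\Z^{(k)}*F'$ that is amalgam-free over $L$ (with $F'$ free), then $M$ already $L$-embeds into one extension of a centralizer of $L$ — extend the centralizer of a nontrivial element of $L$ to create room for the new generators, as in the parameter-free argument behind Theorem \ref{CentralizerExtension}. For the inductive step, consider the cyclic (abelian) JSJ decomposition $\Gamma$ of $M$ relative to $L$, i.e. the JSJ among splittings of $M$ over abelian subgroups in which $L$ is elliptic. Here one needs that $L$ is elliptic in enough splittings; this follows from $L\leq_{\forall}M$, because a splitting of $M$ over an abelian subgroup in which $L$ is hyperbolic, combined with the retractions $r_X$, produces (via a shortening / Makanin--Razborov argument) a proper retraction of $M$ onto a limit group still containing $L$, and iterating this strictly decreases complexity. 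So after finitely many such reductions we may assume $L$ lies in a vertex group $V$ of $\Gamma$ with $L\leq_{\forall}V$, and $V$ is strictly simpler than $M$ in the chosen measure.

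Next I would reconstruct $M$ over $V$ by climbing the graph of groups $\Gamma$. Each edge of $\Gamma$ carries a finitely generated free abelian subgroup of the ambient vertex groups, which (by CSA) is a finite-index subgroup of the centralizer of any of its nontrivial elements; amalgamating over such a subgroup, or performing the HNN move along such an edge, can be realized inside a finite iterated extension of centralizers once the vertex groups are so realized. This is exactly the local mechanism by which Theorem \ref{CentralizerExtension} is proved, so it relativizes edge by edge. Quadratically hanging (surface-type) vertex groups are handled as in the absolute case: a surface group with boundary embeds, over its boundary subgroups, into a finite iterated extension of centralizers. Applying the inductive hypothesis to $V$ and using transitivity of ``$L$-embeds into a finite iterated extension of centralizers of $L$'' (as in (N1)--(N2)), we conclude that $M$ $L$-embeds into a finite iterated extension of centralizers of $L$.

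The main obstacle is the reduction step that forces $L$ to be elliptic while guaranteeing termination: one must fix a complexity (something like the length of a centralizer-extension hierarchy of $M$, or a Scott-type complexity, refined to record $L$) that strictly drops each time we pass to a relative JSJ vertex group containing $L$ or shorten along a splitting in which $L$ is hyperbolic, and then invoke the (relative) descending chain condition for limit groups to know the process stops. A secondary technical point is the bookkeeping of parameters: every intermediate group must continue to contain $L$ as a $\forall$-subgroup so that Lemma \ref{Retraction} applies, and the finitely many edge and surface absorptions must be arranged so that each successive centralizer is extended over a subgroup already present in the tower, so that (N2)--(N3) let the partial towers be concatenated. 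These are the points where the careful, Kharlampovich--Myasnikov / Sela-style analysis of the relative JSJ does the real work.
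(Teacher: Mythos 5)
First, a point of reference: the paper does not prove this theorem at all --- it is imported verbatim from \cite{KhMyasn:2012}, so there is no in-paper argument to measure your proposal against; the comparison has to be with the proof in that reference (and its non-relative ancestor, Theorem 1.4 of \cite{KM}).

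Measured against that, your proposal is a reasonable roadmap of the known strategy (discrimination of $M$ by $L$-retractions via Lemma \ref{Retraction}, hence $M$ is fully residually $L$ as an $L$-group; induction through an abelian decomposition relative to $L$; realizing abelian edge groups as centralizer extensions using CSA), but it is not a proof, because the two steps that carry all the weight are left as acknowledged ``obstacles'' or asserted in one line. (a) The termination/ellipticity step: you need a complexity that strictly drops and a shortening-type argument to reduce to a vertex group $V$ with $L\leq_{\forall}V$; in \cite{KhMyasn:2012} this is exactly the structure theory of coordinate groups of irreducible systems over $L$, and without specifying the complexity and the shortening quotient your induction is not set up. (b) More seriously, the absorption of the surface-type (QH) vertex groups: your sentence ``a surface group with boundary embeds, over its boundary subgroups, into a finite iterated extension of centralizers'' is precisely the hard content of the theorem, not a lemma you can quote. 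A QH piece glued to the rest of $M$ along its boundary does not sit inside $\mathrm{ICE}(L)$ for free; in the actual proof one first embeds $M$ into an NTQ (tower) group over $L$ using the implicit function theorem / lifting of solutions into generic points, and then shows that NTQ groups over $L$ embed into iterated centralizer extensions of $L$ --- this is where quadratic equations over $L$ and the non-degeneracy of the discriminating retractions enter. Finally, a small inaccuracy: even the base case $L\ast\Z$ generally requires an iterated extension (or at least a big-powers argument to locate a free product inside $L(u,t)$), not ``one extension'' as stated. So the proposal identifies the right architecture but the argument as written has genuine gaps at exactly the points where \cite{KhMyasn:2012} does the real work.
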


\begin{lemma}\label{forallCentralizers}
Let $L$ be a limit group and $M$ be a finite iterated extension of centralizers of $L$. Then $L\leq_{\forall} M$.
\end{lemma}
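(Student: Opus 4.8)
The plan is to reduce the statement to a single step: since a finite iterated extension of centralizers is built by finitely many one-step extensions, and the relation $\leq_\forall$ is transitive, it suffices to prove that for any limit group $L$ and any single extension of a centralizer $M = \langle L, t \mid [c,t]=1,\ c\in C_L(u)\rangle$ we have $L \leq_\forall M$; the general case then follows by an easy induction on the number of extensions, using that if $L \leq_\forall L_1 \leq_\forall M$ then $L \leq_\forall M$ (composition of $\forall$-embeddings is a $\forall$-embedding, which is immediate from the definition since universal formulas are preserved downward along each stage). One must also note along the way that intermediate structures $L_i$ in the chain are themselves limit groups, so that Lemma~\ref{Retraction} applies at each stage; this holds because extensions of centralizers of limit groups are limit groups (a standard fact from \cite{KM}).

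For the one-step case I would invoke Lemma~\ref{Retraction}: it suffices to show that for every finite subset $X \subset M$ there is a retraction $r_X : M \to L$ that is injective on $X$. The natural candidate is a \emph{single} retraction that works for all $X$ at once, namely the map $r : M \to L$ fixing $L$ pointwise and sending $t \mapsto 1$ (or, more robustly, $t \mapsto c^{N}$ for a suitably large $N$ depending on $X$). The map $t\mapsto 1$ is well defined as a homomorphism since the defining relations $[c,t]=1$ become $[c,1]=1$, which hold in $L$; and it is clearly a retraction onto $L$. So the whole content is: given a finite $X \subset M$, choose the homomorphism in this family that is injective on $X$. An arbitrary element of $M$ has a normal form as a word $g_0 t^{k_1} g_1 t^{k_2} \cdots t^{k_m} g_m$ with $g_i \in L$, and Britton's lemma / the normal form theorem for this HNN-like extension controls when such an element is trivial. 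The retraction $t \mapsto c^N$ replaces $t^{k_i}$ by $c^{Nk_i}$, and for $N$ large enough (larger than any exponent appearing in the finitely many elements of $X$, relative to the centralizer $C_L(u)$ which is free abelian of finite rank) the images of distinct elements of $X$ remain distinct — this is the kind of argument used to show these groups are fully residually free in the first place.

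The main obstacle is making the injectivity-on-$X$ claim precise: one needs to argue that distinct reduced elements of $M$ appearing in $X$ map to distinct elements of $L$ under $t \mapsto c^N$ for $N \gg 0$. The clean way is probably to quote Theorem~\ref{CentralizerExtensionParameters} or the fully-residually-free machinery rather than redo the normal-form bookkeeping: since $M$ is a limit group, it is fully residually free, but more to the point, the family of retractions $\{t\mapsto c^N\}_N$ is a discriminating family in the sense that their common kernel is trivial, hence for any finite $X$ (in particular any finite set of nontrivial elements, taking differences/quotients of pairs from $X$) some member of the family is injective on $X$. Alternatively, one can observe that $C_M(u) \cong C_L(u) \times \langle t\rangle$ is free abelian, and the separation of powers of $t$ from elements of $L$ reduces to separation in a free abelian group together with the HNN normal form; I expect the authors to phrase it via Lemma~\ref{Retraction} and a short appeal to the structure of centralizer extensions. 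I would present the reduction to the one-step case carefully and then cite the appropriate separation statement, flagging that the one-step retraction $t\mapsto c^N$ (for large $N$) is the key construction.
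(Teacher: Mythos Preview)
Your proposal is correct and follows essentially the same route as the paper: reduce to a single centralizer extension by transitivity, invoke Lemma~\ref{Retraction}, and use the retractions $t\mapsto c^N$ for large $N$ combined with the HNN normal form to get injectivity on any finite $X$. The paper packages the ``$N$ large enough'' step as Lemma~\ref{BigPowers} (the big-powers property, which passes from nonabelian free groups to nonabelian limit groups via full residual freeness), and separates out the abelian case as trivial ($L$ is a direct factor of $M$); you gesture at both points without naming them, but the argument is the same.
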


We first recall the following well known result \cite{MR}:

\begin{lemma}\label{BigPowers}
Suppose in a nonabelian free group $\F$ an element $g$ has the following form:  
$$ a_0 b^{i_1} a_1 b^{i_2} a_2\ldots a_{n-1} b^{i_n} a_n$$
where $n\geq 1$ and $[a_i , b] \neq 1$ for all $0<i<n$. Then $g \neq 1$ whenever
$min_k |i_k |$ is sufficiently large.
\end{lemma}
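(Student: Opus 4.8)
The plan is to prove $g\neq 1$ by a direct analysis of free cancellation, after two normalizing reductions that put $b$ into the cleanest possible shape and strip off the two outermost factors.

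\emph{Reductions.} Since $g=1$ if and only if $u\iv g u=1$ for any $u$, and conjugation by $u$ replaces $b$ by $u\iv b u$ and each $a_i$ by $u\iv a_i u$ --- preserving both the displayed shape and each inequality $[a_i,b]\neq 1$ --- I may assume $b$ is cyclically reduced. Let $p$ be the primitive (non-proper-power) root of $b$, so $b=p^m$, every power of $b$ is a power of $p$, and $p$ is cyclically reduced of length $\ell:=|p|\ge 1$. Rewriting $p^{m i_k}=:p^{j_k}$ turns $g$ into $a_0 p^{j_1}a_1\cdots a_{n-1}p^{j_n}a_n$ with $\min_k|j_k|$ large exactly when $\min_k|i_k|$ is. Because $C_{\F}(b)=C_{\F}(p)=\la p\ra$ in a free group, the hypothesis becomes $a_k\notin\la p\ra$ for $1\le k\le n-1$. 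Finally, moving the two end factors across the equation, $g=1$ is equivalent to
$$ h:=p^{j_1}a_1 p^{j_2}\cdots a_{n-1}p^{j_n}=a_0\iv a_n\iv=:w_0, $$
where $w_0$ is a \emph{fixed} element, independent of the exponents. It therefore suffices to prove that the reduced length of $h$ tends to infinity as $\min_k|j_k|\to\infty$; note that in $h$ the outermost syllables are \emph{bare} powers of $p$ while every interposed factor $a_1,\dots,a_{n-1}$ is a non-power of $p$. (For $n=1$ this is immediate: $h=p^{j_1}$ has length $|j_1|\ell\to\infty$.)

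\emph{A local cancellation bound.} The technical core is the following claim for a reduced word $w$ with $w\notin\la p\ra$: there is a constant $K(w)$ such that, for all exponents $s,t$, the reduced form of $p^s w p^t$ has the shape $p^{s'}\,\widehat w\,p^{t'}$, where $\widehat w$ is a \emph{nonempty} reduced word whose first and last letters are not continuations of the flanking powers, and $|s-s'|,|t-t'|\le K(w)$. The bound on the number of cancelled copies of $p$ is elementary: each fully cancelled copy consumes a length-$\ell$ block of the fixed word $w$, so at most $\lfloor |w|/\ell\rfloor$ copies can disappear from either side. The essential point is that $\widehat w$ is nonempty: if $w$ were swallowed entirely from both ends, then a long prefix and a long suffix of $w$ would each be $\ell$-periodic and together cover $w$, whence by the Fine--Wilf periodicity theorem $w$ would be globally $\ell$-periodic with the phase of $p$, and primitivity of $p$ (excluding proper sub-periods) would force $w\in\la p\ra$, a contradiction.

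\emph{Assembling the reduced form of $h$.} I would apply the claim to each $a_k$ to obtain nonempty cores $\widehat{a_k}$ and constants $K_k:=K(a_k)$. For $\min_k|j_k|$ large, each power $p^{j_k}$ loses at most $K_{k-1}+K_k$ copies to its two neighbours (with $K_0=K_n:=0$, since the ends of $h$ are bare), so a strictly positive surviving $p$-block remains in each case. Since the junction letters of each $\widehat{a_k}$ are, by construction, not inverse to the surviving $p$-blocks on either side, these pieces concatenate without further cancellation, and the reduced form of $h$ is
$$ p^{j_1'}\,\widehat{a_1}\,p^{j_2''}\,\widehat{a_2}\cdots\widehat{a_{n-1}}\,p^{j_n'}, $$
whose length is at least $\big(\sum_{k=1}^{n}\left(|j_k|-K_{k-1}-K_k\right)\big)\ell\to\infty$. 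Hence $h\neq w_0$ once $\min_k|j_k|$ is large enough, i.e. $g\neq 1$.

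\emph{Main obstacle and an alternative.} The one genuinely delicate point is the nonemptiness of the core $\widehat w$, equivalently the assertion that a non-power $w$ cannot be eaten from both sides by adjacent $p$-powers; this is exactly where the hypothesis $a_k\notin\la p\ra$ enters and where the Fine--Wilf estimate together with primitivity of $p$ is needed to rule out a cascade that would collapse the interior. A more conceptual route that packages the same estimate geometrically is to let $\F$ act on its Cayley tree: $p$ acts as a hyperbolic isometry with an axis $L$, the condition $a_k\notin\la p\ra$ says that $a_k$ does not stabilize $L$ and hence moves it off itself beyond a bounded overlap, and a ping-pong/local-geodesic argument then shows that sufficiently large powers force $g$ to translate a base vertex a positive distance, so $g\neq 1$.
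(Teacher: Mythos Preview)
The paper does not actually prove this lemma: it is stated as a ``well known result'' and attributed to \cite{MR}, with no argument given. So there is no proof in the paper to compare your attempt against.

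Your combinatorial approach is standard and essentially correct, but the one step you yourself flag as delicate --- the Fine--Wilf argument for nonemptiness of the core $\widehat w$ --- is not quite right as written. Two small issues. First, Fine--Wilf concerns a single word carrying two \emph{different} periods; here both the swallowed prefix and suffix have the \emph{same} period $\ell$, so what you actually need is the elementary observation that if an $\ell$-periodic prefix and an $\ell$-periodic suffix overlap in at least $\ell$ letters then the whole word is $\ell$-periodic --- and you have not argued that the overlap is that long. Second, ``primitivity of $p$ excludes proper sub-periods'' is false at the word level (e.g.\ $p=aba$ is primitive in the free group but has word-period $2$), and even if $w$ were $\ell$-periodic in phase with $p$ it could be a proper prefix of some $p^N$ and hence not lie in $\la p\ra$. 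The cleanest fix is to bypass Fine--Wilf entirely: if the reduced form of $p^s w p^t$ were a pure power $p^r$, then $w=p^{r-s-t}\in\la p\ra$, a contradiction; so the reduced word is never a pure $p$-power, and a short extra argument then gives the bounded-loss form $p^{s'}\widehat w\,p^{t'}$ you want (with $\widehat w$ depending on the signs of $s,t$ but stabilizing for large exponents). With that repaired, your assembly step goes through.

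Your alternative via the action on the Cayley tree is in fact the more transparent route and is how this lemma is usually proved nowadays: $p$ is hyperbolic with axis $L$, the hypothesis $a_k\notin\la p\ra=\Stab(L)$ gives $a_kL\neq L$, and since two distinct lines in a tree share only a bounded segment, a local-geodesic (broken-line) argument shows the product displaces the basepoint by an amount tending to infinity with $\min_k|j_k|$.
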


The above property easily passes to nonabelian limit groups using the fact that they are fully residually free.
\\ \\
\noindent {\em Proof of Lemma \ref{forallCentralizers}:}

It is enough to show that if $L$ is a limit group and $M$ is an extension of centralizers, then $L\leq_{\forall}M$. If $L$ is (free) abelian, then $M$ is free abelian as well and has $L$ as a direct factor. 
We may therefore assume that $L$ is nonabelian. Using Lemma \ref{Retraction} we need to find for each finite set $X\subseteq M$ a retraction $r_X:M\rightarrow L$ such that $r_X$ is injective on $X$. 
Since $M$ is an extension of centralizers of $L$, it is in particular an HNN-extension $L*_C$ where $C$ is a maximal abelian subgroup of $L$.   

It follows from the normal form theorem for HNN extensions that any element of $M$ is either trivial or of the form $a_0t^{\epsilon_1}a_1\ldots a_{n-1}t^{\epsilon_n}a_n$ 
where $a_i$ is in $L\setminus C$ for all $0<i<n$ and $\epsilon_i\in\Z\setminus\{0\}$ for all $i\leq n$. Now consider the retractions $r:M\rightarrow L$, that fix $L$ and send the stable letter $t$ to high powers of some 
generator of the free abelian group $C$. If $X$ is a singleton the result follows easily from Lemma \ref{BigPowers}. In the case $|X|> 1$, one has to be more careful but still the result is 
an easy exercise on applying Lemma \ref{BigPowers}. \qed    
\ \\ \\
We first prove the $\forall$-JEP condition.  

\begin{proposition}\label{forallJEP}
Let $L, M$ be limit groups and $L$ be nonabelian. Then $L\leq_{\forall} L*M$.  
\end{proposition}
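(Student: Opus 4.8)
The plan is to verify the criterion of Lemma \ref{Retraction}: given a finite subset $X \subseteq L*M$, I need to produce a retraction $r_X : L*M \rightarrow L$ that is injective on $X$. The natural candidate is built as follows. Since $M$ is a limit group, it is fully residually free, and by Theorem \ref{CentralizerExtension} (or simply by residual freeness) there is a homomorphism $\phi : M \rightarrow \F$ into a nonabelian free group $\F$ that is injective on the (finite) set of $M$-syllables appearing in the normal forms of the elements of $X$. Since $L$ is nonabelian, it contains a free subgroup of rank $2$, so we may further compose with an embedding $\F \hookrightarrow L$ (choosing two elements of $L$ that generate a free group of rank $2$, which exists because $L$ is a nonabelian limit group, hence has a nonabelian free subgroup). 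Call the resulting map $\psi : M \rightarrow L$. Then the homomorphism $r = \mathrm{id}_L * \psi : L*M \rightarrow L$ is a retraction onto $L$; the remaining task is to arrange injectivity on $X$.

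The obstacle is that a single fixed $\psi$ need not make $r$ injective on $X$: cancellation can occur in $L$ between an $L$-syllable $a_i$ and the image $\psi(b_i)$ of an adjacent $M$-syllable, collapsing a nontrivial element of $L*M$ to the identity of $L$. To fix this I would introduce a ``separating'' perturbation. Pick an element $t \in L$ of infinite order lying in no abelian subgroup containing any of the finitely many relevant $L$-syllables (possible since $L$ is nonabelian, so nonabelian limit, and its abelian subgroups are malnormal-ish in the CSA sense — more concretely, generic conjugation works), and replace $\psi$ by $\psi_k(m) = t^{k}\,\psi(m)\,t^{-k}$ for a large parameter $k$. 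Writing out $r_k(w)$ for a normal-form word $w = a_0 b_1 a_1 \cdots b_n a_n$ (with $a_i \in L$, $b_i \in M$), one gets an expression of the form $a_0 t^k \psi(b_1) t^{-k} a_1 t^k \psi(b_2) t^{-k} \cdots$, i.e. an alternating product in $L$ in which high powers $t^{\pm k}$ are interleaved between elements that do not commute with $t$. By Lemma \ref{BigPowers} (applied inside the nonabelian limit group $L$, where it is valid since $L$ is fully residually free), for $k$ sufficiently large this product is nontrivial, and more generally $r_k$ separates any two of the finitely many elements of $X$ whose images under the reduction differ in the syllable pattern — the finitely many remaining coincidences are handled by the injectivity of $\phi$ on syllables. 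Choosing $k$ large enough to handle all finitely many pairs from $X$ simultaneously yields the desired $r_X$.

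The one point requiring care is the case analysis for $|X| > 1$ and the possibility that two distinct elements of $X$ have normal forms that are not ``syllable-compatible'' in an obvious way; but this is exactly the routine bookkeeping flagged in the proof of Lemma \ref{forallCentralizers}, and the same Big-Powers argument applies verbatim. I expect the main conceptual step — and the only place real content enters — to be the reduction to Lemma \ref{BigPowers} via the $t^k$-conjugation trick; everything else (that $r$ is a retraction, that $L$ contains a nonabelian free subgroup, that $M$ is residually free) is immediate from the stated results. Once the criterion of Lemma \ref{Retraction} is met, we conclude $L \leq_\forall L*M$, which is the claim.
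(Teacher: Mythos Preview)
Your argument is correct and takes a genuinely different route from the paper's. The paper argues indirectly: it first observes (by the same Big-Powers reasoning you invoke) that $L\leq_{\forall}L*\F$ for $\F$ free, then uses Theorem~\ref{CentralizerExtension} to embed $M$ into an iterated centralizer extension $\Gamma$ of $\F$, so that $L*\F\leq_{\forall}L*\Gamma$ by Lemma~\ref{forallCentralizers}; since $L\leq L*M\leq L*\Gamma$, the sandwich gives $L\leq_{\forall}L*M$. You instead build discriminating retractions $r_k:L*M\to L$ directly, using only that $M$ is fully residually free (to get $\psi:M\to L$ nontrivial on the relevant syllables) and Big Powers inside $L$ (via the $t^k$-conjugation). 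Your approach is more elementary in that it avoids the deep embedding Theorem~\ref{CentralizerExtension}; the paper's approach has the virtue of reusing the machinery already in place and making the later $\forall$-AP argument feel uniform. Two small points worth tightening in your write-up: first, when you choose $t$, you must also require $[\psi(b_i),t]\neq 1$ for each $M$-syllable $b_i$ (not just the $L$-syllables $a_i$), since these too appear as inner factors in the Big-Powers expression; this is still only finitely many conditions on $t$ and is satisfiable because a nonabelian limit group is not a finite union of maximal abelian subgroups. Second, the claim that $L$ contains a nonabelian free subgroup is true but deserves a word: for instance, since $L$ is nonabelian CSA with the Big-Powers property, for non-commuting $a,b\in L$ and large $N$ the elements $a^N, b^N$ freely generate $\F_2$; alternatively, you can bypass this entirely by noting that $L$ and $\F_2$ share the same existential theory, so any map $\F_2\to L$ injective on a prescribed finite set exists.
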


\begin{proof}
It is not hard to see, using the same arguments as in the proof of Lemma \ref{forallCentralizers}, that $L\leq_{\forall} L*\F$, where $\F$ is a free group. 
Now by Theorem \ref{CentralizerExtension}, $M$ embeds in a finite iterated extension of centralizers of some free group $\F$, say $\Gamma$.  
In addition, $L*\F\leq_{\forall} L*\Gamma$ as $L*\Gamma$ can be obtained as a finite iterated extension of centralizers of $L*\F$. Therefore, $L\leq_{\forall}L*\Gamma$. 
Now observe that since $L*M$ is a subgroup of $L*\Gamma$ that contains $L$ we have that $L\leq_{\forall}L*M$, proving the result. 

\end{proof}

It is easy to see that the free product of two limit groups is a limit group. Lemma \ref{forallJEP} in addition 
proves that a nonabelian limit group is existentially closed in the free product of itself with another limit group, thus making  
the $\forall$-JEP condition true for the class of nonabelian limit groups. 

  Notice that if $K\leq_{\forall} L, c\in K$ and $C=C_K(c)$, $C_1=C_L(c)$, then $C_1=C\oplus C_2$ for some finitely generated (maybe trivial) free abelian group $C_2$. In other words, $C$ is a direct summand in $C_1$. 
  Indeed, otherwise some elements in $C$ will have roots in $L$ but not in $K$.

\begin{proposition}\label{forallAP}  Class $\mathcal F$ has $\forall$-AP property.
 \end{proposition}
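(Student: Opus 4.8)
The plan is to reduce the $\forall$-AP for $\mathcal F$ to the abelian situation already handled in the free-abelian case, using the centralizer-extension machinery of Theorems~\ref{CentralizerExtension}, \ref{CentralizerExtensionParameters} and Lemma~\ref{forallCentralizers}. So suppose $L_0, L_1, L_2$ are nonabelian limit groups and $f_i\colon L_0\to_\forall L_i$ are $\forall$-embeddings for $i=1,2$; identifying along $f_1$ we may assume $L_0\leq_\forall L_1$ and we have a $\forall$-embedding $f\colon L_0\to_\forall L_2$. By Theorem~\ref{CentralizerExtensionParameters}, $L_1$ $L_0$-embeds into a finite iterated extension of centralizers $\widehat L_1$ of $L_0$, and similarly $L_2$ $L_0$-embeds (via $f$) into a finite iterated extension of centralizers $\widehat L_2$ of $L_0$; by Lemma~\ref{forallCentralizers} we have $L_0\leq_\forall \widehat L_1$ and $L_0\leq_\forall\widehat L_2$, and $L_i\leq_\forall\widehat L_i$ since $L_i$ is $\forall$-closed (a nonabelian limit group is existentially closed in a limit group over which it is $\forall$-embedded — here one should check this, e.g. from $L_0\leq_\forall L_i\leq \widehat L_i$ and $L_0\leq_\forall\widehat L_i$, which forces $L_i\leq_\forall\widehat L_i$). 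Thus it suffices to amalgamate $\widehat L_1$ and $\widehat L_2$ over $L_0$ with $\forall$-embeddings, i.e. to prove $\forall$-AP in the special case where $L_1,L_2$ are finite iterated extensions of centralizers of the common nonabelian limit group $L_0$.

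Next I would build the amalgam explicitly. Decompose $L_1 = L_0 \leq \cdots \leq L_1$ as a tower of single centralizer extensions $L^{(k+1)} = \langle L^{(k)}, t_{k+1} \mid [t_{k+1}, c]=1,\ c\in C_{L^{(k)}}(u_{k+1})\rangle$, and similarly $L_2$ with new stable letters $s_1,\dots,s_\ell$. The natural candidate for $\mathcal B$ is the group $D$ obtained from $L_0$ by performing \emph{all} of these centralizer extensions, using the $t$'s for the $L_1$-tower and the $s$'s for the $L_2$-tower; more precisely, first build $L_1$ from $L_0$ and then perform on $L_1$ the iterated centralizer extensions corresponding to the $L_2$-tower, where each centralizer $C_{L^{(j)}}(u_j)$ computed inside $L_2$ is replaced by the centralizer of the same element computed in the larger current group. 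The key point, flagged in the remark just before the Proposition, is that passing from $L^{(k)}$ to a $\forall$-overgroup only enlarges a centralizer $C$ to $C\oplus C'$ with $C$ a direct summand, so the centralizer extension relation $[t,c]=1$ for $c\in C$ still makes sense and the new group is still a (finite iterated) centralizer extension; hence $D$ is a limit group, nonabelian, and by Lemma~\ref{forallCentralizers} we get $L_0\leq_\forall D$, $L_1\leq_\forall D$ (the $L_2$-tower over $L_1$ is a centralizer extension of $L_1$) and the embedding $g_2\colon L_2\to D$ sending each $s_j$ appropriately is also a $\forall$-embedding. By construction $g_1\restriction L_0 = g_2\circ f\restriction L_0$, giving the amalgam over $L_0$, which is exactly $\forall$-AP.

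The main obstacle I anticipate is the bookkeeping around centralizers in the second tower: when one performs the $L_2$-tower of centralizer extensions \emph{on top of} $L_1$ rather than on top of $L_0$, the element $u_j$ whose centralizer is being extended may acquire a strictly larger centralizer in the intermediate group than it had in $L_2$, so one must verify (i) that the map $s_j\mapsto s_j$ really does extend to an embedding $L_2\hookrightarrow D$ — this uses that the original $C_{L^{(j-1)}_{L_2}}(u_j)$ embeds as a direct summand of the enlarged centralizer, via the observation in the pre-Proposition remark applied repeatedly — and (ii) that this embedding is a $\forall$-embedding, which should follow because $L_2$ itself $\forall$-embeds into a finite iterated centralizer extension of $L_2$ (Theorem~\ref{CentralizerExtensionParameters} again, or directly Lemma~\ref{forallCentralizers}) and $D$ sits between. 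A cleaner route to sidestep (i)–(ii) may be to amalgamate one centralizer extension at a time: first handle the case where $L_1$ and $L_2$ are each a \emph{single} centralizer extension of $L_0$ — here $D$ is visibly the double extension $\langle L_0, t, s\mid [t,C_1]=1,\ [s,C_2']=1\rangle$ where $C_2'$ is the centralizer of $u_2$ computed in $\langle L_0,t\rangle$, a direct summand extension of $C_2 = C_{L_0}(u_2)$ — and then induct on the total number of centralizer extensions in the two towers, using (N2)-style transitivity of $\leq_\forall$ at each stage. Either way, the analytic input is entirely Lemma~\ref{forallCentralizers} plus the elementary direct-summand remark about centralizers under $\forall$-extensions; no new small-cancellation or big-powers argument beyond what is already quoted should be needed.
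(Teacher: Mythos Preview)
Your overall architecture --- embed $L_1,L_2$ into iterated centralizer extensions $\widehat L_1,\widehat L_2$ of $L_0$ via Theorem~\ref{CentralizerExtensionParameters}, then amalgamate the two towers by induction on total tower height --- is exactly what the paper does. The construction of the common over-group (your $D$, the paper's $N_{mn}$) and the one-step-at-a-time induction are the same.

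The gap is in your reduction step. The implication ``$L_0\leq_\forall L_i\leq\widehat L_i$ and $L_0\leq_\forall\widehat L_i$ force $L_i\leq_\forall\widehat L_i$'' is simply false as a general fact about $\forall$-embeddings: an existential formula with parameters in $L_i\setminus L_0$ that holds in $\widehat L_i$ need not descend to $L_i$ just because formulas with parameters in $L_0$ do. Concretely, by Lemma~\ref{Retraction} you would need retractions $\widehat L_i\to L_i$ injective on arbitrary finite sets, and the Big-Powers retractions you have in hand only retract $\widehat L_i$ onto $L_0$, not onto $L_i$. Your later attempt to rescue $L_2\leq_\forall D$ by saying ``$D$ sits between $L_2$ and an ICE of $L_2$'' has the same problem: you have not shown that $D$ $L_2$-embeds into any ICE of $L_2$ (this is essentially the symmetric statement $N_{mn}\in ICE(M_n)$, which the paper mentions parenthetically but neither proves nor uses).

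The paper sidesteps both issues by \emph{not} reducing. It keeps the original $L,M$ throughout, builds $N_{mn}$ as you do, but then takes as the amalgam the subgroup $\Gamma=\langle L,M\rangle\leq N_{mn}$ rather than $N_{mn}$ itself. The point is that the standard retraction $\phi\colon N_{mn}\to N_{m0}=L_m$ (sending each $t_j$ to a large power of $d_j\in K$) maps $M\subseteq M_n$ into $K\subseteq L$, hence $\phi(\Gamma)=\phi(\langle L,M\rangle)\subseteq L$; so $\phi|_\Gamma$ is a discriminating retraction onto $L$ itself, giving $L\leq_\forall\Gamma$ directly via Lemma~\ref{Retraction}. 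The symmetric argument gives $M\leq_\forall\Gamma$. This is the missing idea: pass to the subgroup generated by the original groups so that the Big-Powers retractions land where you need them.
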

\begin{proof}
Let $K, L, M$ be nonabelian limit groups such that $K\leq_{\forall} L$ and $K\leq_{\forall} M$.  We will construct a limit group $\Gamma$ that is a maximal 
commutative transitive quotient of the amalgamated product $L*_KM$  and show that $L,M\leq_{\forall} \Gamma .$

 By Theorem \ref{CentralizerExtensionParameters}, $L$ $K$-embeds in  finite iterated extension of centralizers of $K$, say $L_m$, and  $M$ also $K$-embeds in  finite iterated extension of centralizers of $K$, say $M_n$.

Let $K=L_0<L_1<\ldots ,L_m,$ where $L _i=\langle L _{i-1}, s_i|[C_{L _{i-1}}(c_i),s_i]=1\rangle ,$

$K=M_0<M_1<\ldots< M_n, $ where $M _i=\langle M _{i-1}, t_i|[C_{M _{i-1}}(d_i),t_i]=1\rangle .$ 


Define now $N_{0j}=M_j,\  N_{i0}=L_i$  and  
$N_{ij}=\langle N_{i(j-1)}, t_j|[C_{N_{i(j-1)}}(d_j),t_j]=1\rangle .$
We will prove by induction on $m+n$ that $L_m,M_n$ $K$-embed into $N_{mn}$.
(It can be also shown that $N_{ij}=\langle N_{(i-1)j}, s_i|[C_{N_{(i-1)j}}(c_i),s_i]=1\rangle $ although this is not needed for the proof.)

For the base case first suppose $m=n=1.$ 
We have two cases:

1) $C_K(c_1)$ and $C_K(d_1)$ are not conjugate in $K$. Then $$N_{11}= \langle K,s_1,t_1| 
[C_K(c_1), s_1]=1,  [C_K(d_1), t_1]=1\rangle .$$

2)  $C_K(c_1)$ and $C_K(d_1)$ are conjugate in $K$, then    
$c_1=d_1^g, g\in K.$  Then $$N_{11}= \langle K,s_1,t_1| 
[C_K(c_1), s_1]=1,  [C_K(c_1), t_1^g]=1, [t_1^g,s_1]=1\rangle .$$

In both cases $N_{11}$ is obtained by an extension of a centralizer from $L_1=N_{10}$ and also from $M_1=N_{01}$, therefore $L_1,M_1$ $K$-embed  into $N_{11}.$  

Similarly,   since $N_{1,j}=\langle N_{1(j-1)}, t_j| [C_{N_{1(j-1)}}(d_j),t_j]=1\rangle$, we have that 
$M_j$ $K$-embeds into $N_{1j}$ and $L_1,N_{1(j-1)}$ $L_1$-embed into $N_{1j}.$ 
The sum of the lengths of the sequences of  centralizer extensions  $L_1 \to N_{11} \to  \ldots \to N_{1n}$ and $L_1 \to L_2 \to \ldots \to L_m$  is smaller than $n+m$.
Therefore, by induction,  $L_m, N_{1n}$ $L_1$-embed into $N_{mn}$.  Then $L_m, M_n$ $K$-embed into $N_{mn}.$

We have $L,M\leq N_{mn}$. Let $\Gamma$ be the subgroup of $N_{mn}$ generated by $L$ and $M$.  We will show that $L,M\leq_{\forall}\Gamma .$
We use induction on $n$. For the base case first suppose $m=n=1.$  Then $\Gamma \leq N_{11}.$

We take any finite set of non-trivial elements in $\Gamma$ in normal form corresponding to $N_{11}$ 
$$S=\{\ell _{i1}{t_1}^{\alpha _{i1}}\ldots \ell _{ik_i}{t_1}^{\alpha _{ik_i}}, i\in I\},$$
where $\ell _{ij}\in L,  [\ell _{ij},d_1]\neq 1$,$\alpha _{ij}\neq 0$  for any pair $i,j$ except that, maybe, $\ell _{i1}=1$ and , maybe, $\alpha _{ik_i}=0.$

By Lemma \ref{BigPowers} for limit groups any homomorphism $\phi :N_{11}\rightarrow N_{10}=L_1$ that is identical on $N_{1,0}$ and  
maps $t_1$ to a sufficiently big power of  $d_1$ will map all elements in $S$ to nontrivial elements. The restriction $\phi_{\Gamma}$ is a discriminating retraction on $L$.  
Therefore by Lemma \ref{Retraction} $L\leq_{\forall}\Gamma$. By symmetry $M\leq_{\forall}\Gamma$.

Similarly, for arbitrary $m$, suppose that $\Gamma\leq N_{m1}$. We can again consider an arbitrary set $S$ of nontrivial elements in $\Gamma$. It will have the same form as in the case $m=1$. 
Any homomorphism $\phi :N_{m1}\rightarrow N_{m0}=L_m$ that is identical on $L_m$ and  
maps $t_1$ to a sufficiently large power of $d_1$ will map all elements in $S$ to nontrivial elements. 
The restriction $\phi_{\Gamma}$ is a discriminating retraction on $L$. Therefore by Lemma \ref{Retraction} $L\leq_{\forall}\Gamma$.

Suppose that we proved the statement for arbitrary $m$ and for $n-1$ and $\Gamma\in N_{mn}$.  For any finite set $S$ of elements in $\Gamma$, any homomorphism $\phi :N_{mn}\rightarrow N_{m(n-1)}$ 
that is identical on $N_{m(n-1)}$ and  maps $t_n$ to a sufficiently large power of $d_n$
 will map all elements in $S$ to nontrivial elements. We can now take $\phi (M)$ instead of $M$ and apply induction.
\end{proof}

Finally, combining the results above we get:

\begin{theorem}\label{LimitFraisse}
Class $\mathcal{F}$ is a $\forall$-Fra\"{i}ss\'{e} class. 

In particular there exists a countable group $G$ with the following properties: 
\begin{itemize}
 \item the $\forall$-age of $G$ is the class $\mathcal{F}$;
 \item the group $G$ is weakly $\forall$-homogeneous;
 \item the group $G$ is a union of a $\forall$-chain of nonabelian limit groups. 
 \end{itemize}
Moreover, any countable group with the above properties is isomorphic to $G$. 
\end{theorem}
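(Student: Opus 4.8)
The plan is to assemble Theorem~\ref{LimitFraisse} directly from the structural facts already established in this section, so the proof is essentially a bookkeeping argument.  First I would verify that $\mathcal{F}$, the class of nonabelian limit groups, satisfies the four conditions in the definition of a $\forall$-Fra\"{i}ss\'{e} class.  The (IP) condition is immediate since being a nonabelian limit group is an isomorphism-invariant property.  For ($\forall$-HP), I would recall (as noted just after the statement of Lemma~\ref{Retraction}) that any finitely generated subgroup of a limit group is a limit group, and that an existentially closed subgroup of a nonabelian group cannot itself be abelian; hence a finitely generated $\forall$-substructure of a group in $\mathcal{F}$ is again in $\mathcal{F}$.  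The ($\forall$-JEP) condition is exactly Proposition~\ref{forallJEP} together with the observation that a free product of two limit groups is a limit group: given nonabelian limit groups $L$ and $M$, the group $L*M$ lies in $\mathcal{F}$ and $L\leq_{\forall} L*M$; composing with the symmetric statement $M\leq_{\forall} L*M$ gives the required $\forall$-embeddings $f_i\colon\mathcal{A}_i\rightarrow_{\forall} L*M$.  The ($\forall$-AP) condition is precisely Proposition~\ref{forallAP}: given $K\leq_{\forall} L$ and $K\leq_{\forall} M$ one forms the commutative-transitive quotient $\Gamma$ of $L*_K M$ sitting inside $N_{mn}$, which is a nonabelian limit group (being generated by the nonabelian $L$ and $M$ inside a limit group), with $L,M\leq_{\forall}\Gamma$ and the amalgamation square commuting.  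Finally $\mathcal{F}$ is countable up to isomorphism, since limit groups are finitely presented by \cite{KM} (noted in the first paragraph of this section).

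Once the class is shown to be a $\forall$-Fra\"{i}ss\'{e} class, the second part of the statement is an immediate invocation of Theorem~\ref{forallFraisse}: there is a countable group $G$ whose $\forall$-age is exactly $\mathcal{F}$, which is weakly $\forall$-homogeneous, which is a union of a $\forall$-chain of members of $\mathcal{F}$, and which is unique up to isomorphism subject to these properties.  I would simply state that the three bullet points transcribe the conclusions of Theorem~\ref{forallFraisse} in the present context, and that the members of the $\forall$-chain are nonabelian limit groups because they lie in $\mathcal{F}$.

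The only genuine subtlety, and the step I would flag as the main obstacle, is hidden in Proposition~\ref{forallAP}: one must check that the group $\Gamma$ generated by $L$ and $M$ inside $N_{mn}$ really is a \emph{limit} group and not merely a subgroup of one --- but this is automatic since subgroups of limit groups are limit groups, and it is nonabelian because it contains the nonabelian $L$.  What does require the work already done is the verification that $L\leq_{\forall}\Gamma$ and $M\leq_{\forall}\Gamma$, i.e.\ that passing to the subgroup generated by $L$ and $M$ does not destroy existential closedness; this is exactly the induction on the total length of the centralizer-extension towers carried out in the proof of Proposition~\ref{forallAP}, using Lemma~\ref{BigPowers} to produce discriminating retractions and Lemma~\ref{Retraction} to convert these into $\forall$-embeddings.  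Since all of this is already in place, the proof of Theorem~\ref{LimitFraisse} itself is short: it is the assembly of Propositions~\ref{forallJEP} and~\ref{forallAP}, the elementary (IP) and ($\forall$-HP) remarks, and a citation of Theorem~\ref{forallFraisse} for the existence and uniqueness of $G$.
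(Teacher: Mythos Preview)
Your proposal is correct and matches the paper's approach exactly: the paper proves Theorem~\ref{LimitFraisse} simply by the phrase ``combining the results above,'' and your write-up is precisely the assembly of (IP), ($\forall$-HP), Proposition~\ref{forallJEP}, Proposition~\ref{forallAP}, countability via finite presentability, and an invocation of Theorem~\ref{forallFraisse}. One small imprecision: where you write ``subgroups of limit groups are limit groups,'' this should read ``finitely generated subgroups of limit groups are limit groups''; the conclusion is unaffected since $\Gamma=\langle L,M\rangle$ is finitely generated.
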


We next show that the $\forall$-Fra\"{i}ss\'{e} limit of the class of nonabelian limit groups is a fully residually free group. 

\begin{proposition}
Let $G$ be a union of a $\forall$-chain of limit groups. Then $G$ is fully residually free. 
\end{proposition}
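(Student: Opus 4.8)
The plan is to show that a union $G=\bigcup_i G_i$ of a $\forall$-chain $G_1\leq_\forall G_2\leq_\forall\cdots$ of limit groups is fully residually free, i.e.\ for every finite subset $X\subseteq G$ there is a homomorphism $G\to\F$ to a free group injective on $X$. The key point is that ``fully residually free'' is detected on finitely generated subgroups together with the $\forall$-embedding structure.

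\medskip

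First I would reduce to the case of a single limit group: given a finite $X\subseteq G$, there is an index $i$ with $X\subseteq G_i$, so it suffices to produce, for each finite $X\subseteq G_i$, a homomorphism $G\to\F$ injective on $X$. Since $G_i$ is a limit group it is fully residually free, so there is a homomorphism $\psi\colon G_i\to\F$ to some free group $\F$ that is injective on $X$ (and we may assume $\F$ nonabelian, enlarging if necessary). The task is then to extend $\psi$ — or rather some homomorphism agreeing with $\psi$ on $X$ — to all of $G$. Here I would invoke the retraction characterization of $\forall$-embeddings, Lemma~\ref{Retraction}: since $G_i\leq_\forall G_{i+1}$, for the finite set $X\subseteq G_i\subseteq G_{i+1}$ there is a retraction $r_{i+1}\colon G_{i+1}\to G_i$ injective on $X$. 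Iterating, for each $j>i$ we get a retraction $\rho_j\colon G_j\to G_i$ that is injective on $X$ (composing the successive retractions, each chosen injective on the image of $X$, which stays $X$ since retractions fix $G_i$ pointwise). These retractions are compatible: $\rho_j$ restricted to $G_{j'}$ equals $\rho_{j'}$ for $i\le j'\le j$, because each $r_{k}$ is the identity on $G_{k-1}$. Hence they assemble into a single retraction $\rho\colon G\to G_i$ with $\rho|_X$ injective.

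\medskip

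Finally, compose: $\psi\circ\rho\colon G\to\F$ is a homomorphism from $G$ to a free group, and since $\rho$ is injective on $X$ and $\psi$ is injective on $X$ (equivalently on $\rho(X)$, using that $\rho$ fixes $G_i\supseteq X$ so $\rho(X)=X$), the composite is injective on $X$. As $X$ was an arbitrary finite subset of $G$, this shows $G$ is fully residually free.

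\medskip

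The main obstacle I anticipate is the compatibility/assembly step: one must be careful that the retractions $r_{i+1}\colon G_{i+1}\to G_i$ furnished by Lemma~\ref{Retraction}, a priori depending on the chosen finite set, can be chosen coherently so that their composites $\rho_j$ agree on overlaps and thus glue to a well-defined map on the direct limit $G$. Since each retraction is required to be the identity on the lower group, the composites automatically restrict correctly, so the gluing is in fact routine once stated carefully; the only genuine content is tracking that the finite set we need injectivity on never grows beyond $X$ (because $X\subseteq G_i$ is fixed by every retraction in the tower). I would also note in passing that the abelian case is consistent since a free abelian group is residually (free abelian), but the statement as phrased concerns the chain of limit groups and each $G_i$ being fully residually free is all that is used.
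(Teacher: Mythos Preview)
Your proof is correct and follows essentially the same approach as the paper's: pick an index with $X\subseteq G_i$, use the retraction characterization of $\forall$-embeddings (Lemma~\ref{Retraction}) to build a coherent retraction $G\to G_i$ fixing $X$, and compose with a map $G_i\to\F$ injective on $X$. If anything, you are more careful than the paper about the compatibility of the successive retractions and about why injectivity on $X$ is automatic (since $X$ is fixed pointwise at every stage).
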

\begin{proof}
Let $G$ be the union of the following $\forall$-chain $G_1\leq_{\forall} G_2\leq_{\forall}\ldots\leq_{\forall}G_n\leq\ldots$.
Let $X$ be a finite subset of $G$. Then $X$ is a subset of $G_n$ for some $n$. Choose $f:G_n\rightarrow  X$ injective on $X$. Any retraction $r_{\ell+1}:G_{\ell +1}\rightarrow G_{\ell}$ for $\ell >n$ is injective on $X$. 
Let  $f_m:G_m\rightarrow \F$ be $f_m= f\circ r_{n+1}\ldots \circ{r_m}$ for all $m\geq n$. 
The union $f:=\bigcup f_m$ is a morphism from $G$ to $\F$ that is injective on $X$.

\end{proof}

\section{Lyndon's completions}

\subsection{Hierarchy of  extensions of centralizers}

In this section we consider $G$-groups, i.e. extensions of $G$, where $G$ is a countable nonabelian torsion-free CSA-group. 
We relate the free Lyndon $\Z[t]$-group $G^{Z[t]}$ with Fra\"{i}ss\'{e}  limits with respect to extensions of centralizers.  
 
We denote by $ICE(G)$ the class of ($G$)-groups obtained from the given group $G$ as finite iterated extensions of centralizers of $G$. We allow the empty sequence of centralizer extensions, 
so that $G\in ICE(G)$.  Recall that if $G$ is CSA, then  ICE($G$) is also CSA. We say that an embedding  $\phi: A \to B$ is an $ICE$-embedding if $B$ can be obtained from $\phi(A)$ as a finite iterated extension of centralizers. 
In this event we write $\to_{ICE}$.  We will apply the results of Section \ref{sec:2.4} with $\K$ to be $ICE(G)$ and $\sqsubseteq$ to be $ICE$-extensions.

Denote $G(u,t)=\langle G,t|[C_G(u),t]=1\rangle .$

\begin{lemma}\label{le:4.1}
Let $G$ be a countable non-abelian torsion-free CSA group (not necessary finitely generated). Then the class $ICE(G)$ satisfies JEP and AP relative to ICE-embeddings.
\end{lemma}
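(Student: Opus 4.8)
The statement to prove is Lemma \ref{le:4.1}: the class $ICE(G)$ satisfies JEP and AP with respect to $ICE$-embeddings, where $G$ is a countable nonabelian torsion-free CSA group. The plan is to mimic the strategy used for $\forall$-JEP and $\forall$-AP in the category of limit groups (Propositions \ref{forallJEP} and \ref{forallAP}), replacing the appeals to Theorems \ref{CentralizerExtension} and \ref{CentralizerExtensionParameters} (which produced the relevant ambient iterated centralizer extensions in the limit group setting) by the \emph{definition} of $ICE(G)$ itself: here the structures already come equipped with a distinguished sequence of centralizer extensions over $G$, so no such embedding theorem is needed.

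\textbf{JEP.} Given $A_1, A_2 \in ICE(G)$, each is obtained from $G$ by a finite chain of centralizer extensions: $G = A_i^{(0)} < A_i^{(1)} < \cdots < A_i^{(k_i)} = A_i$. First I would form the "concatenation": starting from $A_1$, perform the same sequence of centralizer extensions that builds $A_2$ from $G$ — that is, for each step $A_2^{(j-1)} < A_2^{(j)} = \langle A_2^{(j-1)}, t_j \mid [C_{A_2^{(j-1)}}(u_j), t_j] = 1\rangle$, adjoin a new stable letter $t_j$ commuting with $C$ of the \emph{same} element $u_j \in G \subseteq A_1$ computed now inside the growing group. This yields a group $B \in ICE(G)$ containing $A_1$ as an $ICE$-subgroup (by construction), and one checks that the obvious map $A_2 \to B$ is an embedding: this is the standard fact that extending a centralizer of a subgroup that contains the relevant centralizer "restricts" correctly, i.e. $C_{A_1}(u_j)$ is a direct summand behaviour — here one uses that $G$ is CSA, so centralizers of nontrivial elements are maximal abelian, malnormal, and the HNN/amalgam normal form theorems apply cleanly (the parenthetical remark before Proposition \ref{forallAP} about $C$ being a direct summand is exactly what guarantees the new group is still an iterated centralizer extension of $G$ and that $A_2$ embeds). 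This is essentially the $m=n$, "$N_{ij}$" bookkeeping of the proof of Proposition \ref{forallAP}, specialized to amalgamating over $G$ itself.

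\textbf{AP.} Given $A_0, A_1, A_2 \in ICE(G)$ with $ICE$-embeddings $f_i : A_0 \to A_i$, note that $A_0$ itself lies in $ICE(G)$, and each $A_i$ is an iterated centralizer extension of $f_i(A_0)$; since $A_0 \in ICE(G)$ and $f_i$ fixes $G$, each $A_i \in ICE(A_0)$ (identifying via $f_i$) and also $A_i \in ICE(G)$. Now I would run exactly the double-indexed construction from the proof of Proposition \ref{forallAP}: writing $A_1$ as $A_0 = L_0 < L_1 < \cdots < L_m$ by centralizer extensions of elements $c_i$, and $A_2$ as $A_0 = M_0 < M_1 < \cdots < M_n$ by centralizer extensions of elements $d_j$, define $N_{0j} = M_j$, $N_{i0} = L_i$, and $N_{ij} = \langle N_{i(j-1)}, t_j \mid [C_{N_{i(j-1)}}(d_j), t_j] = 1\rangle$, and prove by induction on $m+n$ that $A_1 = L_m$ and $A_2 = M_n$ both $A_0$-embed into $N_{mn}$, splitting into the two cases according to whether the centralizers being extended are conjugate in the base group (using CSA-ness to make "conjugate" well-behaved and to rewrite the relevant presentation, exactly as in cases (1) and (2) there). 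Then $B := N_{mn} \in ICE(G)$ (it is an iterated centralizer extension of $A_0$, hence of $G$), the resulting maps $g_i : A_i \to B$ are $ICE$-embeddings by construction, and $g_1 \circ f_1 = g_2 \circ f_2$ because both restrict to the identity on $A_0$ (indeed on $G$, and the amalgamation is over $A_0$).

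\textbf{Main obstacle.} The only real content, as in Proposition \ref{forallAP}, is verifying that the concatenated/interleaved construction genuinely lands back in $ICE(G)$ and that the original groups embed — i.e. that adjoining a stable letter centralizing $C_{N_{i(j-1)}}(d_j)$ is \emph{again} an honest extension of a centralizer of the base $G$ (not of some larger abelian group) and that no collapsing occurs. This is where CSA is essential: centralizers of nontrivial elements are abelian and malnormal, conjugacy of centralizers is controlled, and the earlier observation (that when $K \leq_\forall L$, $c \in K$, the centralizer $C_K(c)$ is a direct summand of $C_L(c)$) ensures the tower stays "free" at each stage, so the normal form theorems for HNN extensions give injectivity. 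Once this bookkeeping is in place — and it is word-for-word the argument already carried out for $\mathcal F$, with "$\forall$-embedding" replaced throughout by "$ICE$-embedding" and "limit group" by "member of $ICE(G)$" — the lemma follows. I would therefore state the proof briefly and refer to the proof of Proposition \ref{forallAP} for the combinatorial details.
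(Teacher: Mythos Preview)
Your proposal is correct and follows essentially the same route as the paper. Two minor simplifications you miss: first, the paper observes that JEP is immediate from AP because $G$ itself $ICE$-embeds into every member of $ICE(G)$, so there is no need for a separate JEP argument; second, in the base case the paper notes that when the two centralizers $C_C(u_1)$ and $C_C(u_2)$ are conjugate in $C$ the groups $A$ and $B$ are already $C$-isomorphic (so the amalgam is just $A$), and when they are not conjugate the two one-step extensions commute, $C(u_1,t_1)(u_2,t_2)\simeq C(u_2,t_2)(u_1,t_1)$, which gives the amalgam directly without the full $N_{ij}$ bookkeeping.
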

\begin{proof}
Since $G$ is ICE-embedded in every group from $ICE(G)$ it suffices to prove AP. 

Suppose that $A,B,C \in ICE(G)$ and $\phi:C \to A$ and $\psi:C \to B$ are ICE-embeddings. 

\medskip \noindent
{\em Case 1}. Suppose $A$ and $B$ are obtained from $C$ by a single centralizer extension, i.e., $A = C(u_1,t_1), B = C(u_2,t_2)$. If the centralizers $C(u_1)$ and $C(u_2)$ are conjugate in $C$, say $d^{-1} C(u_1) d = C(u_2)$, 
then the groups $A$ and $B$ are isomorphic with the isomorphism $\alpha: A \to B$ such that $\alpha_C = id_C$ and $t_1 \to d^{-1}t_2d$. If $C(u_1)$ and $C(u_2)$ are not conjugate in $C$ then   
$$A(u_2,t_2) \simeq C(u_1,t_1)(u_2,t_2) \simeq C(u_2,t_2)(u_1,t_1) \simeq B(u_1,t_1),
$$
hence the group $A(u_2,t_2)$ (as well as $B(u_1,t_1)$) gives the required amalgamation.

\medskip \noindent
{\em Case 2}. Suppose now that $A$ and $B$ are  obtained from $C$ by some  finite sequences of CE, so $$C = A_0 \leq_{CE} A_1 \leq_{CE} \ldots \leq _{CE} A_k = A,$$
$$C = B_0 \leq_{CE} B_1 \leq_{CE} \ldots \leq_{CE} B_m = B$$
for some $k,m$. We will use induction on the sum $k+m$. By Case 1 we can amalgam $A_1, B_1$ into some $D_1$, and then amalgam $A_2$ and $D_1$ into some $D_2$,  after $k$ steps we will get the 
sequence of CE:  $B_1 \to D_1 \to  \ldots \to D_k$. The sum of the lengths of the sequences of SE $B_1 \to D_1 \to  \ldots \to D_k$ and $B_1 \to B_2 \to \ldots \to B_m$  is smaller, so by induction on $k+m$ one gets the required amalgamation.

\end{proof}
\begin{lemma} \label{le:4.2} Let $G$ be a countable nonabelian torsion-free CSA group.

 1) (ICE-HP) If $G\rightarrow _{ICE}A, G\leq B, B\rightarrow _{ICE}A,$ then $G\rightarrow _{ICE}B$.

2) (ICE-N3) If $A\leq B \leq C\in ICE (G)$ and $A\rightarrow _{ICE} C,$ then $A\rightarrow _{ICE} B.$
\end{lemma}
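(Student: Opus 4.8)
The plan is to prove the two statements about $ICE$-embeddings, which are exactly the properties needed to apply the finitary Fra\"{i}ss\'{e} machinery of Section \ref{sec:2.4} (they are the analogues of (HP) and (N3)). Both parts hinge on the same structural fact about iterated extensions of centralizers of a CSA group: if $A\le B\le C$ with $C$ obtained from $A$ by a finite chain of centralizer extensions, then $B$ is ``squeezed'' between $A$ and $C$ in a way that is itself built from centralizer extensions. So the strategy is to analyze a single centralizer extension $C' = C(u,t) = \la C, t \mid [C_C(u),t]=1\ra$ first, then bootstrap by induction on the length of the defining chain.

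For part 2) ((ICE-N3)), suppose $A\le B\le C$ and $C$ is obtained from $A$ by a chain $A = C_0 \le_{CE} C_1 \le_{CE} \cdots \le_{CE} C_k = C$. I would induct on $k$. The case $k=0$ is trivial. For the inductive step write $C = C_{k-1}(u,t)$. Using the normal form theorem for HNN extensions (as in the proof of Lemma \ref{forallCentralizers}), every element of $C$ is either in $C_{k-1}$ or has a genuine occurrence of $t^{\pm 1}$ in reduced form. The key point is to understand $B\cap C_{k-1}$ and how $B$ sits relative to it: one shows $B$ is generated by $B\cap C_{k-1}$ together with (conjugates of) powers of $t$, and that the relevant centralizer in $B\cap C_{k-1}$ is the one being extended. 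More precisely, since $A\le B\le C = C_{k-1}(u,t)$, I claim $B$ is obtained from $B_0 := B\cap C_{k-1}$ by a centralizer extension (extending $C_{B_0}(w)$ for an appropriate $w$ in the $C$-conjugacy class of $u$, using CSA to control conjugacy of maximal abelian subgroups) — this is essentially the statement that subgroups of HNN extensions over abelian associated subgroups, when they contain the base's relevant part, are again HNN extensions of the induced type. Then $A \le B_0 \le C_{k-1}$ with $C_{k-1}\in ICE(G)$ and (by Lemma \ref{forallCentralizers}-type reasoning or directly) $A \to_{ICE} C_{k-1}$ follows from $A\to_{ICE}C$; applying the induction hypothesis gives $A \to_{ICE} B_0$, and composing with the single centralizer extension $B_0 \le_{CE} B$ gives $A \to_{ICE} B$.

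For part 1) ((ICE-HP)), we are given $G \to_{ICE} A$, $G\le B\le A$ and $B\to_{ICE}A$. I want $G\to_{ICE}B$. This follows by combining part 2) with an ``exchange'' argument. From $G\to_{ICE} A$ and $G\le B\le A$, part 2) gives $G\to_{ICE} B$ directly — so in fact (ICE-HP) is an immediate consequence of (ICE-N3) once we note the hypothesis $G\le B\le A$ is present. (The extra hypothesis $B\to_{ICE}A$ is not even needed, or serves only to keep us inside the class; in any case $B\in ICE(G)$ follows.) So the real content is part 2).

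The main obstacle is the structural claim in the inductive step of part 2): that a subgroup $B$ with $A\le B\le C_{k-1}(u,t)$ is itself a centralizer extension of $B\cap C_{k-1}$. This requires care because $B$ need not contain all of $C_{k-1}$, only $A$. The tools to push it through are: the Bass--Serre / normal form description of $C_{k-1}(u,t)$ as an HNN extension with abelian associated subgroups, the CSA property ensuring that maximal abelian subgroups are malnormal and that the associated subgroup $C_{C_{k-1}}(u)$ meets conjugates of itself trivially or fully, and the observation that $t$ commutes with $C_{C_{k-1}}(u)$ so any element of $B$ involving $t$ forces $C_{B\cap C_{k-1}}(u\text{-conjugate})$ to be the subgroup centralized by that power of $t$. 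One then checks the presentation of $B$ matches that of an iterated centralizer extension. I expect the bookkeeping here — especially tracking conjugating elements when the centralizer being extended is only conjugate into $B\cap C_{k-1}$ rather than literally contained — to be the delicate part, handled exactly as in Case 2 of Lemma \ref{le:4.1} and Case 2 of Proposition \ref{forallAP}.
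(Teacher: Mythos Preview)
There is a genuine gap: your structural claim in part 2) --- that any $B$ with $A\le B\le C_{k-1}(u,t)$ is a centralizer extension of $B_0:=B\cap C_{k-1}$ --- is false without the hypothesis $B\in ICE(G)$, and this hypothesis is exactly what you drop. Take $G=A=F_2=\langle a,b\rangle$, $C=G(a,t)$, and $B=\langle G,\,t b t^{-1}\rangle$. Since $tat^{-1}=a$, one has $B=\langle G,\,tGt^{-1}\rangle$, and by Bass--Serre theory this is the amalgam $G\ast_{\langle a\rangle} tGt^{-1}$. In this amalgam every centralizer is cyclic (the edge group $\langle a\rangle$ is malnormal in both free factors), whereas every group in $ICE(G)\smallsetminus\{G\}$ contains an element with centralizer $\Z^2$. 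Hence $B\notin ICE(G)$ and $A\not\to_{ICE}B$, so your inductive step cannot go through. The same example shows your reduction of part 1) to part 2) is circular: to invoke (ICE-N3) you would need $B\in ICE(G)$, which is precisely the conclusion of (ICE-HP); the extra hypothesis $B\to_{ICE}A$ in part 1) is not decorative but is what forces $B$ to have the right shape.

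The paper's proofs of both parts use these extra hypotheses in an essential way. In part 2) the assumption $B\in ICE(G)$ is what guarantees that if $B$ contains an element with $t_k$ in its normal form then $B$ already contains $t_k$ itself, so that $B=\langle B_1,t_k\mid [C(u_k)\cap B_1,t_k]=1\rangle$ with $B_1\le A_{k-1}$; this is the replacement for your structural claim, and it fails for the $B$ above (which contains $tbt^{-1}$ but not $t$). In part 1) the hypothesis $B\to_{ICE}A$ plays the analogous role, and the paper argues separately by induction on the length of the chain $G\to_{ICE}A$, again using that $A$ is built from $B$ by centralizer extensions to pin down how the stable letters sit in $B$. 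Your Bass--Serre sketch points in the right direction, but you must feed in the $ICE$ hypothesis on $B$ to rule out amalgam-type subgroups.
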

\begin{proof} 1) We can assume that both $A$ and $B$ are freely indecomposable relative to $G$. Suppose $$G = A_0 \leq_{CE} A_1 \leq_{CE} \ldots \leq _{CE} A_k = A$$ and on each step the corresponding centralizer is extended by one letter. 
We will use induction on $k$. 

 If $k=1$, $A=A_1=\langle G, t_1| [u_1,t_1]=1\rangle$. Either $B=G$ or  $G$ is a proper subgroup of $B$. In the latter case if $B$ does not contain  the stable letter $t_1$, then $A$  
 must be obtained from $B$ by freely extending the centralizer of $u_1$ by $t_1$.  But in this case $B$ cannot have elements containing $t_1$ in the normal form in $A$.  
 Therefore this case is impossible.  Hence if $G$ is a proper subgroup of $B$, then $B=A$.
  
Consider now the general case. Let $A_{j+1}=\langle A_j,t_j| [C(u_{j+1}),t_{j+1}]=1\rangle$ for $j\leq k-1.$ Suppose that it is  possible to change the order of centralizer extensions such a way that $B$ is conjugate into $A_{k-1}$. 
Since $G\leq B$, $B\leq A_{k-1}.$  
Then by mapping $t_k$ into $u_k$ we see that $A_{k-1}$ can be obtained by a sequence of centralizer extensions from  $B$. By induction, $B$ is obtained from $G$ by a sequence  of extensions of centralizers.  

If it is not  possible to change the order of centralizer extensions such a way that $B$ is conjugate into $A_{k-1}$, then every conjugate $B^g$ contains elements that have $t_k$ in the normal form, 
therefore it must contain the conjugate $t_k^g$ (because to obtain $A_k$ from $B$ we can only use centralizer extensions). If $t_k$ belongs to the centralizer of 
some $u_k\in G$ we can consider $\langle G, t_k|[u_k,t_k]=1\rangle$ instead of $G$ and apply induction. Otherwise, we map $t_k$ to $u_k$. Then the image of $A_k$ is $A_{k-1}$ and the image of $B$ is some subgroup $\bar B$. 
And $A_{k-1}$ is obtained from $\bar B$ by a series of centralizer extensions. Then by induction, $\bar B$ is obtained from $G$ by a  series of centralizer extensions, and $B$ is obtained from $\bar B$ by extending some centralizer by $t_k$. 

2) Let $$A = A_0 \leq_{CE} A_1 \leq_{CE} \ldots \leq _{CE} A_k = C.$$  Then $B\in A_i$ for some $i\leq k$.  Taking $A_i$ instead of $C$ we can assume that $i=k$. 
We use induction on $k$. If $k=1$, then either $B=A$ or $B$ contains elements with $t_1$ in the normal form. 
But since $B\in ICE(G)$ it must be $B=C$ in the latter case. For the general case we can suppose that  $B\not \leq A_{k-1}$. 
If there are elements in $B$ that contain $t_{k}$ in the normal form, then $B$ contains $t_{k} $ because $B$ is obtained from $G$ by centralizer extensions. Therefore
$B=\langle B_1, t_k|[C(u_k)\cap B,t_k]=1\rangle$, where $B_1\in A_{k-1}.$ By induction, $A\rightarrow _{ICE}B_1$, and, therefore,  $A\rightarrow _{ICE}B.$

\end{proof}

If $H$ is the direct limit of a chain of ICE-embeddings of groups from $ICE(G)$, namely
$$G=G_0\leq _{CE}G_1\leq _{CE}\ldots \leq _{CE}G_i\leq _{CE}\ldots ,$$
then for $A\in ICE(G)$ we can define $A\to_{ICE} H$ if $A\to_{ICE} G_i$ for some $i$. It follows from Lemmas
\ref{le:4.1}, \ref{le:4.2}  and the results of Section \ref{sec:2.4} that this definition does not depend on the chain. 
It also follows that $A\to_{ICE} H$ if $H$ can be obtained as the direct limit of a chain of ICE-embeddings of groups from
$ICE(A)$. It also follows that $ICE(G)$ is a Fra\"{i}ss\'{e} class with respect to $ICE$-embeddings. Theorem \ref{th:2.14} implies the following

\begin{theorem} \label{th:Fraisse-ICE}
Let $G$ be a countable non-abelian torsion-free CSA group. Then there exists a countable group $H$ with the following properties: 
\begin{itemize}
 \item the group $H$ is the union of an $ICE$-chain of groups in $ICE(G)$;  
 \item the $ICE$-$age$ of $H$ is exactly $ICE(G)$;
  \item Any  $G$-isomorphism between two groups $A,B \leq_{ICE} H$ can be extended to an automorphism of $H$.
 \end{itemize} 
Moreover, any other countable group with the above properties is $G$-isomorphic to $G$. 
 \end{theorem}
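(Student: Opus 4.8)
The plan is to deduce the theorem directly from Theorem \ref{th:2.14}, applied not to a class of honestly finitely generated groups but in the relatively finitely generated category over $G$ described at the end of Section \ref{sec:2.4}: one takes $\K = ICE(G)$, takes $\sqsubseteq$ to be the relation $\to_{ICE}$ of being obtained from the (fixed) base by a finite iterated extension of centralizers, and notes that every member of $ICE(G)$ is finitely generated relative to $G$ and that every $ICE$-embedding fixes $G$ pointwise. So the entire task reduces to verifying that this data meets the hypotheses of Definition \ref{specialFraisse}, after which the conclusion is read off verbatim from Theorem \ref{th:2.14}.

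First I would dispose of the structural axioms. Axiom (N1) holds because the empty sequence of centralizer extensions is allowed, so $A \to_{ICE} A$; (N2) holds because concatenating two finite sequences of $CE$-steps is again such a sequence; and (N3) is exactly Lemma \ref{le:4.2}(2). Then (IP) is immediate, and ($\sqsubseteq$-HP) is precisely Lemma \ref{le:4.2}(1): the hypothesis of that axiom, that $\mathcal B$ is a finitely generated structure with $\mathcal B \sqsubseteq \mathcal A$, is in this category the situation $G \leq B$ with $B \to_{ICE} A$, and the conclusion is $G \to_{ICE} B$. Finally ($\sqsubseteq$-JEP) and ($\sqsubseteq$-AP) are the content of Lemma \ref{le:4.1}: since $G$ is $ICE$-embedded in every member of $ICE(G)$, JEP follows from AP, and AP is proved there by induction on the total number of centralizer extensions on the two sides, with the one-step-on-each-side situation of Case 1 as the base case.

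Next I would check the countability requirements of Definition \ref{specialFraisse}. The class $ICE(G)$ has only countably many isomorphism types: $G$ is countable, and a member of $ICE(G)$ is determined up to $G$-isomorphism by a finite amount of data, namely the finite sequence of centralizers being extended (each a centralizer of an element of $G$, hence countably many choices, and only countably many admissible orderings/nestings). Likewise there are only countably many $G$-embeddings between any two fixed members of $ICE(G)$, each being pinned down by the images of the finitely many stable letters inside a countable target group. With all the hypotheses of Definition \ref{specialFraisse} in hand, Theorem \ref{th:2.14} produces a countable $H$ that is the union of an $ICE$-chain of groups in $ICE(G)$, whose $ICE$-age is exactly $ICE(G)$, and which is $ICE$-homogeneous, i.e. any $G$-isomorphism between $A, B \leq_{ICE} H$ extends to an automorphism of $H$; the uniqueness (up to $G$-isomorphism) is the ``moreover'' clause of Theorem \ref{th:2.14}. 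Here one also invokes the remark preceding the theorem — itself a consequence of Lemmas \ref{le:4.1} and \ref{le:4.2} and Section \ref{sec:2.4} — that $A \to_{ICE} H$ is well defined independently of the chosen $ICE$-chain, so that the statement of the $ICE$-age makes sense.

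I expect the only genuinely delicate points to be the two already settled inside Lemma \ref{le:4.2}: that a subgroup containing $G$ inside an iterated centralizer extension is again such an extension, which requires the dichotomy argument on whether a conjugate of the intermediate group must absorb the last stable letter. Everything else — transitivity, reflexivity, the countability estimates, and the invocation of Theorem \ref{th:2.14} — is routine bookkeeping, so the proof is essentially an assembly of the preceding lemmas rather than a new argument.
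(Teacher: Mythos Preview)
Your proposal is correct and follows essentially the same route as the paper: the paper's argument is precisely the paragraph preceding the theorem, which says that Lemmas \ref{le:4.1} and \ref{le:4.2} verify the hypotheses of Section \ref{sec:2.4} so that Theorem \ref{th:2.14} applies, and you have simply spelled out that verification in detail. One small inaccuracy: in your countability argument you write that each centralizer extended is ``a centralizer of an element of $G$'', but in fact at step $i$ one extends the centralizer of an element of $G_{i-1}$, not of $G$; the conclusion is unaffected since each $G_i$ is countable.
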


\begin{theorem} \label{th:Lyndon} The group $H$ in Theorem \ref{th:Fraisse-ICE} is isomorphic to the Lyndon's completion $G^{\Z[t]}$ of the group $G$. 
In particular when $G$ is a nonabelian free group $\F$, then $H$ is isomorphic to $\F^{\Z[t]}$.
\end{theorem}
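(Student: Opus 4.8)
The plan is to identify the Fra\"{i}ss\'{e} limit $H$ of Theorem \ref{th:Fraisse-ICE} with Lyndon's completion $G^{\Z[t]}$ by showing that $G^{\Z[t]}$ itself possesses the three defining properties listed in Theorem \ref{th:Fraisse-ICE}, and then invoking the uniqueness clause. First I would recall the structure of $G^{\Z[t]}$ as built by Myasnikov--Remeslennikov: $G^{\Z[t]}$ is the union of a chain $G = G_0 \leq G_1 \leq \ldots \leq G_i \leq \ldots$ where each $G_{i+1}$ is obtained from $G_i$ by extending \emph{all} centralizers of $G_i$ by a copy of $\Z$ (equivalently, a possibly infinite iterated extension of centralizers, carried out level by level). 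Since each single centralizer extension $G_i(u,t)$ is an $ICE$-step and $G_{i+1}$ is a direct limit of such steps over the finitely generated subgroups, one checks that $G_{i+1}$ is a direct limit of groups $ICE$-embedded over $G_i$; hence $G^{\Z[t]}$ is the union of an $ICE$-chain of groups in $ICE(G)$ (after refining the chain so that each link is a genuine finite iterated centralizer extension). This gives the first bullet.

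For the second bullet, that the $ICE$-age of $G^{\Z[t]}$ is exactly $ICE(G)$, one inclusion is immediate from the first bullet together with (N3)/Lemma \ref{le:4.2}(2): every $A \leq_{ICE} G^{\Z[t]}$ satisfies $A \in ICE(G)$ because $A \to_{ICE} G_i$ for some $i$ and $G \to_{ICE} G_i$, so by Lemma \ref{le:4.2}(1) (the ICE-HP property, with $G \leq A \leq G_i$) we get $G \to_{ICE} A$, i.e.\ $A \in ICE(G)$. The reverse inclusion, that every $A \in ICE(G)$ is $ICE$-embedded in $G^{\Z[t]}$, follows from the universal/extension property of Lyndon's completion: any finite iterated extension of centralizers of $G$ embeds canonically into $G^{\Z[t]}$ over $G$ (since $\Z[t] \supseteq \Z$ allows one to adjoin a $\Z$ to each relevant centralizer), and this embedding is by construction an $ICE$-embedding into some $G_i$. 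Thus $A \to_{ICE} G^{\Z[t]}$.

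For the third bullet, $ICE$-homogeneity, I would use the fact --- due to Myasnikov--Remeslennikov and Kharlampovich--Myasnikov --- that $G^{\Z[t]}$ is homogeneous with respect to $G$-embeddings of its $\Z[t]$-subgroups, or more directly: given $A, B \leq_{ICE} H$ with a $G$-isomorphism $\alpha : A \to B$, one extends $\alpha$ stepwise up the $ICE$-chain using the amalgamation property established in Lemma \ref{le:4.1} together with the universal property of $G^{\Z[t]}$, producing a compatible tower of $G$-embeddings whose union is the desired automorphism. Actually the cleanest route is to bypass reproving this: since Theorem \ref{th:Fraisse-ICE} already gives a unique $H$ with the three properties, and since $G^{\Z[t]}$ has the first two properties, it suffices to establish the third for $G^{\Z[t]}$ --- but in fact the quickest argument is to show directly that the $ICE$-chain defining $G^{\Z[t]}$ is (cofinal in) an $ICE$-chain satisfying the extension property of the Fra\"{i}ss\'{e} construction, so $G^{\Z[t]}$ \emph{is} (a) Fra\"{i}ss\'{e} limit, and then uniqueness forces $H \cong_G G^{\Z[t]}$. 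The final sentence about $\F$ is then just the special case $G = \F$, recalling $\F^{\Z[t]}$ is Lyndon's original free exponential group.

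The main obstacle I anticipate is the bookkeeping in the third bullet: reconciling the "adjoin one $\Z$ to every centralizer at once" construction of $G^{\Z[t]}$ with the "adjoin finitely many centralizer extensions, one $\Z$ each" notion of $ICE$-embedding, and verifying that the natural extension property for $G^{\Z[t]}$ matches the abstract Fra\"{i}ss\'{e} extension property --- in particular checking that every partial $G$-isomorphism between finitely generated $ICE$-subgroups can be pushed forward using only $ICE$-steps and that the relevant centralizers in $G^{\Z[t]}$ are exactly $\Z$-extensions (no extra roots appear), which is where the CSA hypothesis and the remark preceding Proposition \ref{forallAP} about centralizers being direct summands get used. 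Once that matching is in place, uniqueness in Theorem \ref{th:Fraisse-ICE} closes the argument.
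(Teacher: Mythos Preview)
Your proposal eventually lands on the right idea, but it takes a detour that the paper avoids. The paper does \emph{not} verify the three bullets of Theorem~\ref{th:Fraisse-ICE} for $G^{\Z[t]}$ and then invoke the uniqueness clause; in particular it never checks $ICE$-homogeneity of $G^{\Z[t]}$ directly. Instead it does exactly the ``quickest argument'' you sketch at the end: it writes $G^{\Z[t]}$ as the union of an $ICE$-chain $G = \tilde G_0 \leq_{CE} \tilde G_1 \leq_{CE} \ldots$ (refining the ``extend all centralizers at level $i$'' construction from \cite{MR} to single $CE$ steps), observes that $ICE$-$age(G^{\Z[t]}) = ICE(G)$, and then interleaves this chain with the chain $G = G_0 \leq_{CE} G_1 \leq_{CE} \ldots$ whose union is $H$. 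One gets containments $\tilde G_i \leq G_j \leq \tilde G_k \leq G_m$, and property (N3) from Lemma~\ref{le:4.2} upgrades each containment to an $ICE$-embedding. The two direct limits of cofinal interleaved chains are then isomorphic. So the paper bypasses the third bullet entirely by a cofinality argument rather than the Fra\"{i}ss\'{e} uniqueness clause.

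Your initial plan (verify all three bullets) would work in principle, but you correctly flag that the $ICE$-homogeneity of $G^{\Z[t]}$ is where the real content lies, and proving it amounts to establishing the extension property for $G^{\Z[t]}$---which is essentially the same work as the interleaving argument, just packaged differently. Your anticipated obstacle (reconciling the ``extend every centralizer at once'' levels with the finitary $ICE$ steps) is genuine and must be addressed in either route; the paper handles it with the one-line claim that $G^{\Z[t]}$ is a direct limit of a chain of single centralizer extensions, which is where the countability of $G$ and the CSA property (guaranteeing non-conjugate centralizers don't interact) are used implicitly. So your proposal is correct, just more circuitous than necessary; you could have started where you ended.
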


\begin{proof} In \cite{MR} it was proved that the Lyndon's completion $G^{\Z[t]}$ of the group $G$ can be constructed as follows. 
We begin with $G$ and extend all non conjugate centralizers obtaining the group $\tilde G_1$. 
Then we extend all non-conjugate centralizers in $\tilde G_1$ and so on. 
To obtain $\tilde G_{i+1}$ we extend all non-conjugate centralizers of $\tilde G_i$.  
It is clear that $G^{\Z[t]}$ can be obtained as the direct limit of a chain of  extensions of centralizers:
$$G=\tilde G_0\leq_{CE}\tilde G_1\leq _{CE}\ldots \leq _{CE}\tilde G_i\leq _{CE}\ldots .$$ 
At the same time $H$ is the direct limit of a chain of ICE-embeddings of groups from $ICE(G)$, namely
$$G=G_0\leq _{CE}G_1\leq _{CE}\ldots \leq _{CE}G_i\leq _{CE}\ldots ,$$

Since ICE-$age (G^{\Z[t]})=ICE(G)$, for each $i$ there are $j, k,m$ such that
$$\tilde G_i\leq G_j\leq\tilde G_k\leq G_m.$$ Property ICE-N3 implies that
$$\tilde G_i\to _{ICE} G_j\to _{ICE}\tilde G_k\to_{ICE} G_m$$ and $H$ is isomorphic to $G^{\Z[t]}$. 
\end{proof}

We remark that if $G$ is a non-abelian fully residually free group then the class of finitely generated subgroups of $G^{\Z[t]}$ is exactly the class of limit groups. Indeed, since  $F\leq G$ 
we get,  by \cite{KM}, that $G^{\Z[t]}$ contains all limit groups. On the other hand, since the class of limit groups is closed under extensions of centralizers  
and finitely generated subgroups, all finitely generated subgroups of $G^{\Z[t]}$ are limit groups.

If $G$ is a toral relatively hyperbolic group, then, it follows from \cite{KhMyasn:2012}, that the class of finitely generated subgroups of 
$G^{\Z[t]}$ is exactly the class of fully residually $G$ groups.

Theorem \ref{th:Lyndon} explains the model-theoretic nature of Lyndon's completions $G^{\Z[t]}$ for a countable  non-abelian torsion free CSA-group G.

\subsection{Hierarchy of free products and extensions of centralizers}

Denote by $\CH$ the smallest set of groups obtained from the trivial group by finitely many operations of free products with $\mathbb Z$ and extensions of centralizers. We refer to $\CH$ as the hierarchy of centralizer extensions. 
We say that an embedding $\phi: A \to B$ is an $FPCE$-embedding if $B$ can be obtained from $\phi(A)$ by a finite sequence of free products with $\mathbb Z$ and extensions of centralizers.

\begin{lemma} \label{le:AP-H}
The class $\CH$ satisfies JEP, AP, HP and (N3) conditions with respect to $FPCE$-embeddings.
\end{lemma}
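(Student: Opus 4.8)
The plan is to mimic the structure already used for $ICE(G)$ in Lemmas \ref{le:4.1} and \ref{le:4.2}, adding the free-product-with-$\Z$ operation to the induction. Throughout we work relative to the trivial group, so every $FPCE$-embedding is automatically a $\{1\}$-embedding and it suffices to establish JEP as a consequence of AP together with the observation that every group in $\CH$ is built from the trivial group by a finite $FPCE$-sequence. Recall also that every group in $\CH$ is a finitely generated fully residually free (indeed limit) group, since it is obtained from $\Z$ by free products with $\Z$ and extensions of centralizers, both of which preserve the class of limit groups; hence Lemma \ref{BigPowers}, Lemma \ref{Retraction} and the CSA property are all available.

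First I would prove (N3) and HP, which are essentially combinatorial. For (N3): if $A \leq B \leq C$ with $C$ obtained from $A$ by a finite sequence of free-product-with-$\Z$ and centralizer-extension steps $A = C_0 \leq C_1 \leq \ldots \leq C_k = C$, then $B$ sits inside some $C_i$, and after reducing to $i = k$ one argues by induction on $k$ exactly as in Lemma \ref{le:4.2}(2): the last step is either a free product $C_{k-1} * \Z$ or a centralizer extension; in the free-product case $B$, being a subgroup of $C_{k-1}*\Z$ containing $A$ and itself lying in $\CH$, must (by the Kurosh subgroup theorem, since $B\in\CH$ is finitely generated and freely indecomposable-or-a-free-product in a controlled way) be of the form $B_1 * \Z$ or $B_1$ with $B_1 \leq C_{k-1}$; in the centralizer-extension case one repeats the normal-form argument of Lemma \ref{le:4.2} sending the new stable letter into $C_{k-1}$. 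Then $A \to_{ICE} B_1$ by induction and the last step is recovered, giving $A \to_{FPCE} B$. HP follows the same pattern as Lemma \ref{le:4.2}(1).

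Next I would prove AP (and hence JEP). Suppose $A, B, C \in \CH$ with $FPCE$-embeddings $\phi \colon C \to A$ and $\psi \colon C \to B$; identify $C$ with its images. We induct on the total length $k + m$ of the two $FPCE$-sequences $C \leq \ldots \leq A$ and $C \leq \ldots \leq B$. The base case $k = m = 1$ splits into four subcases according to whether each of the two single steps is a free product with $\Z$ or a centralizer extension. When both are centralizer extensions this is exactly Case 1 of Lemma \ref{le:4.1}. When both are free products with $\Z$, $A = C * \la s\ra$ and $B = C * \la t \ra$, and $D = C * \la s\ra * \la t\ra$ works with the obvious embeddings. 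When one is a free product and one is a centralizer extension, say $A = C * \la s\ra$ and $B = \la C, t \mid [C_C(u), t] = 1\ra$, the group $D = \la B, s \mid s \text{ free}\ra = B * \la s\ra$ receives $A$ (as $(C*\la s\ra) \hookrightarrow B * \la s\ra$, the centralizer extension being performed inside $B$) and receives $B$, and these agree on $C$; symmetrically one may instead perform the free product inside $A$. For the inductive step one peels off the last step of the $A$-sequence, amalgamates the shorter piece with $B$ by the inductive hypothesis to get some $D_1$, then continues as in Case 2 of Lemma \ref{le:4.1}, noting that the "length budget" strictly decreases.

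The main obstacle I anticipate is the commutativity-of-centralizer-extensions bookkeeping when one operation is a free product and the other a centralizer extension — precisely that extending a centralizer of $C$ and then taking a free product with $\Z$ yields the same group (up to the relevant isomorphism fixing $C$) as taking the free product first and then extending the (unchanged) centralizer inside the larger group. This is the analogue of the identity $C(u_1,t_1)(u_2,t_2) \simeq C(u_2,t_2)(u_1,t_1)$ used in Lemma \ref{le:4.1}, but now mixing the two operation types; it is true because a free product with $\Z$ does not alter centralizers of elements of $C$ (the CSA property of $\CH$ is what guarantees $C_{C*\Z}(u) = C_C(u)$ for $u \in C$ nontrivial), but verifying it cleanly, and similarly checking that no unintended identifications or collapses occur when iterating, is where the care is needed. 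Once these commutation identities are in hand, the induction closes exactly as in Lemmas \ref{le:4.1} and \ref{le:4.2}.
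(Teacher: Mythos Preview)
Your approach is correct in spirit and would work, but the paper takes a shorter and more structural route for AP. The commutation identity you flag at the end as ``the main obstacle'' --- that $(C(u,t))\ast\Z \simeq (C\ast\Z)(u,t)$ because free product with $\Z$ does not change centralizers of nontrivial elements of $C$ --- is in fact the paper's \emph{starting} observation, and it is used not to justify one of four base cases but to eliminate the case analysis altogether. From this identity the paper deduces that any $FPCE$-sequence can be reordered into ``all free products with $\Z$ first, then all centralizer extensions''. Hence if $C\to_{FPCE}A$ and $C\to_{FPCE}B$, one may write $A\in ICE(C\ast F(X))$ and $B\in ICE(C\ast F(Y))$ for suitable finite-rank free groups, pad to $A_1=A\ast F(Y)$, $B_1=B\ast F(X)$, set $C_1=C\ast F(X)\ast F(Y)$, and then invoke the already-proved AP for $ICE(C_1)$ (Lemma~\ref{le:4.1}) directly. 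No new induction, no four subcases.

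Your direct induction on $k+m$ with a four-way base case is a legitimate alternative, and it has the minor advantage of being self-contained rather than relying on Lemma~\ref{le:4.1} as a black box. One caution: the inductive step as you phrase it (``peel off the last step of the $A$-sequence, amalgamate the shorter piece with $B$ by the inductive hypothesis'') does not obviously control the $FPCE$-length of the resulting $A_{k-1}\to D_1$, so the second amalgamation $A\leftarrow A_{k-1}\to D_1$ need not have smaller total length. To make the induction close you must follow Case~2 of Lemma~\ref{le:4.1} more literally: build a chain $B_1\to D_1\to\cdots\to D_k$ one step at a time using the base case, so that each $D_{i-1}\to D_i$ is a single step, and only then apply the inductive hypothesis with base $B_1$. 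For HP and (N3) the paper simply says ``proved as in Lemma~\ref{le:4.2}'', so your Kurosh-based sketch for the free-product step is in fact more detailed than what the paper provides.
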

\begin{proof}
Observe first that for any groups $A$ and $B$ one has $A(u,t)\ast B \simeq (A\ast B)(u,t)$. Indeed, 
the centralizer of  any non-trivial $u$ in $A$ is equal to the centralizer of $u$ in $A \ast B$. 
Now the claim follows from observing the  defining relations of the groups. 
This shows that the result of applying a finite sequence of operations FP (free product) and CE (centralizer extension) to a group $C$  is the same as applying some FP and after that some CE.

Now let $A$ be $B$ are $FPCE$-extensions of some $C$ in $\CH$. Then by the observation above $A$ is $ICE$-extension of $C\ast F(X)$ for some free group 
$F(X)$ with finite $X$, and $B$ is an $ICE$-extension of $C \ast F(Y)$ for some finite $Y$. 
Consider the $FPCE$-extensions $A_1 = A \ast F(Y)$ and $B_1 = B\ast F(X)$  and put $C_1 = C\ast F(X)\ast F(Y)$. 
Then again by the observation above $A_1, B_1 \in ICE(C_1)$. Applying AP in the class $ICE(C_1)$ one gets the required amalgamation of $A_1$ and $B_1$ with respect  
to $FPCE$-embeddings $C_1 \to A_1$ and $C_1 \to B_1$.

The operation  FP gives JEP in $\CH$.

Conditions HP and (N3) are proved as in Lemma \ref{le:4.2}

\end{proof}

Lemma \ref{le:AP-H} shows that $\CH$ forms an $FPCE$-Fra\"{i}ss\'{e} class. Hence we have the following result.

\begin{theorem} \label{th:Fraisse-H}
 There exists a unique up to isomorphism group $H$ such that
\begin{itemize}
\item [1)] $H$ is countable,
 \item [2)] Finitely generated subgroups of $H$ are exactly limit groups.
 \item [3)] $H$ is the union of an $FPCE$-chain of groups from $\CH$,
 \item [4)] $age_{FPCE}(H) = \CH$,
 \item [5)] $H$ is $FPCE$-homogeneous.
\end{itemize} 
Furthermore, the group $H$ is isomorphic to the Lyndon's completion $\F_{\omega}^{\Z[t]}$ of the free group $\F_{\omega}$ of countable rank. 
\end{theorem}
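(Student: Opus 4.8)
The plan is to read off properties (1), (3), (4), (5) from the abstract Fra\"{i}ss\'{e} machinery, to verify (2) by hand, and then to identify $H$ with $\F_\omega^{\Z[t]}$ by an interleaving argument patterned on the proof of Theorem \ref{th:Lyndon}. For the first step: Lemma \ref{le:AP-H} has already shown that $\CH$ is an $FPCE$-Fra\"{i}ss\'{e} class in the sense of Definition \ref{specialFraisse} (it is countable up to isomorphism, has countably many $FPCE$-embeddings between any two members, and the conditions IP, HP, JEP, AP, (N1)--(N3) hold). Applying Theorem \ref{th:2.14} with $\K=\CH$ and $\sqsubseteq$ taken to be $FPCE$-extension produces a group $H$, unique up to isomorphism, which is the union of an $FPCE$-chain $G_0\to_{FPCE}G_1\to_{FPCE}\cdots$ of members of $\CH$, has $age_{FPCE}(H)=\CH$, and is $FPCE$-homogeneous. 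Since $H$ is a countable union of finitely generated groups it is itself countable, so (1), (3), (4), (5) all hold.

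Next I would check (2). Every member of $\CH$ is obtained from the trivial group by finitely many free products with $\Z$ and extensions of centralizers; nonabelian free groups are limit groups and the class of limit groups is closed under both of these operations, so each $G_i$ is a limit group and hence so is every finitely generated subgroup of $G_i$. As $H=\bigcup_i G_i$, every finitely generated subgroup of $H$ lies in some $G_i$ and is therefore a limit group. Conversely, Theorem \ref{CentralizerExtension} says every limit group embeds into a finite iterated extension of centralizers of a finitely generated free group, and such a group belongs to $\CH$; since $age_{FPCE}(H)=\CH$, every member of $\CH$ --- and hence every limit group --- is isomorphic to a subgroup of $H$. Thus the finitely generated subgroups of $H$ are exactly the limit groups.

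For the final clause I would mimic the proof of Theorem \ref{th:Lyndon}. By \cite{MR}, $\F_\omega^{\Z[t]}$ is the direct limit of a chain $\F_\omega=\tilde G_0\leq_{CE}\tilde G_1\leq_{CE}\cdots$ in which $\tilde G_{i+1}$ is obtained from $\tilde G_i$ by extending all non-conjugate centralizers. Writing $\F_\omega=\bigcup_n\F_n$ with $\F_{n+1}=\F_n*\Z$, and noting that each step $\tilde G_i\leq_{CE}\tilde G_{i+1}$ is itself a union of a chain of single centralizer extensions of subgroups finitely generated relative to $\tilde G_i$, a diagonal/interleaving argument presents $\F_\omega^{\Z[t]}$ as the union of an $FPCE$-chain of groups from $\CH$. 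I would then establish $age_{FPCE}(\F_\omega^{\Z[t]})=\CH$: the inclusion $\subseteq$ is the HP clause of Lemma \ref{le:AP-H}, while for $\supseteq$ one uses the reordering observed in the proof of Lemma \ref{le:AP-H} to write a given $B\in\CH$ as a finite iterated centralizer extension of a finite-rank free factor $\F_k$ of $\F_\omega$, and then pushes the complementary free factors of $\F_\omega$ past the successive centralizer extensions via the identity $A(u,t)*B'\cong(A*B')(u,t)$, realizing $B$ as an $FPCE$-subgroup of some $\tilde G_j$. Once both $H$ and $\F_\omega^{\Z[t]}$ are exhibited as unions of $FPCE$-chains of $\CH$-groups with $FPCE$-age equal to $\CH$, the age equality lets one interleave the two chains into an increasing sandwich $G_{i_0}\leq \tilde H_{j_0}\leq G_{i_1}\leq \tilde H_{j_1}\leq\cdots$; condition (N3) from Lemma \ref{le:AP-H} upgrades each inclusion here to an $FPCE$-embedding, so the common union is simultaneously isomorphic to $H$ and to $\F_\omega^{\Z[t]}$, whence $H\cong\F_\omega^{\Z[t]}$.

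The step I expect to be the main obstacle is the computation $age_{FPCE}(\F_\omega^{\Z[t]})=\CH$, and in particular the verification that every finite iterated centralizer extension of a finite-rank free factor of $\F_\omega$ genuinely occurs as an $FPCE$-subgroup inside the Myasnikov--Remeslennikov tower; this forces careful bookkeeping in the diagonal refinement of that tower together with repeated use of the free-product/centralizer-extension commutation identity, whereas the verification of (2) and the final interleaving are routine given the results already cited.
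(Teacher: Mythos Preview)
Your proposal is correct and follows essentially the approach the paper has in mind: the paper gives no explicit proof of this theorem at all, merely noting that Lemma~\ref{le:AP-H} makes $\CH$ an $FPCE$-Fra\"{i}ss\'{e} class and then stating the result, so your invocation of Theorem~\ref{th:2.14} for (1), (3), (4), (5), the direct verification of (2), and the interleaving argument for the ``Furthermore'' clause patterned on Theorem~\ref{th:Lyndon} are exactly what the authors leave implicit.

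One point worth tightening: in your final interleaving you speak of a ``common union'' of the sandwich $G_{i_0}\leq \tilde H_{j_0}\leq G_{i_1}\leq\cdots$, but the $G_i$ live in $H$ and the $\tilde H_j$ live in $\F_\omega^{\Z[t]}$, so there is no ambient group in which these are literal inclusions. What is really needed is a back-and-forth producing a coherent system of $FPCE$-embeddings whose composites agree with the given chain maps; this uses the extension property of the Fra\"{i}ss\'{e} limit $H$ (and, going the other way, the fact that $\F_\omega^{\Z[t]}$ absorbs any finite sequence of FP-with-$\Z$ and CE steps over any of its $FPCE$-subgroups). The paper's own proof of Theorem~\ref{th:Lyndon} is equally terse on this point, so you are matching its level of detail, but when you write it out you should make the coherence of the interleaving maps explicit rather than appealing to a ``common union''.
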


\section{The free group}

The basis of this section is the following celebrated result.

\begin{theorem}[Kharlampovich-Myasnikov \cite{KM06}, Sela \cite{Sel7}]
The chain $\F_2\subset\F_3\subset\ldots\subset\F_n\subset\ldots$, under the natural embeddings, is an elementary chain. In particular, 
all nonabelian free groups are elementarily equivalent.
\end{theorem}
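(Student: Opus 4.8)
The plan is to prove the two assertions in sequence, since the ``in particular'' is an immediate consequence once the chain $\F_2\subset\F_3\subset\cdots$ is known to be elementary. The key tool available at this point is the machinery of \emph{Merzlyakov-type implicit function theorems} and the description of definable sets over free groups developed in the Kharlampovich-Myasnikov solution of the Tarski problem; alternatively, one may quote Sela's parallel development via formal solutions and test sequences. So the first step is to fix the natural inclusions $\iota_n\colon \F_n\hookrightarrow\F_{n+1}$ sending each free generator to itself, and to show that each $\iota_n$ is an elementary embedding, i.e. for every first-order formula $\phi(\bar x)$ in the language of groups and every tuple $\bar g$ from $\F_n$ one has $\F_n\models\phi(\bar g)\iff\F_{n+1}\models\phi(\bar g)$.

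The heart of the matter is the direction $\F_{n+1}\models\phi(\bar g)\Rightarrow\F_n\models\phi(\bar g)$, the ``transfer down'' of truth, since the other direction amounts to the harder statement that $\F_n$ is existentially closed --- no, rather: the downward direction for arbitrary $\phi$ and the upward direction for arbitrary $\phi$ are symmetric in difficulty once one has quantifier elimination down to boolean combinations of a restricted class of formulas. Concretely, I would invoke the structural result that the elementary theory of a nonabelian free group admits quantifier elimination down to boolean combinations of $\forall\exists$-formulas (in the pure language of groups), together with the Implicit Function Theorem: any system of equations over $\F_n$ that is satisfiable ``generically'' --- i.e. on every point of a fundamental sequence of solutions of a given NTQ system --- admits a solution given by a formal word, and crucially this formal solution lives already over $\F_n$ and does not require the extra generator of $\F_{n+1}$. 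This is the mechanism by which a witness existing in the larger group is pulled back to the smaller one. One then runs an induction on the quantifier complexity of $\phi$, using the IFT to handle the base $\exists$-layer and the Parametrization theorems for the $\forall\exists$-layers, checking at each stage that passing from $\F_{n+1}$ to $\F_n$ preserves the relevant definable sets.

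The main obstacle, and the step that genuinely requires the deep theory rather than soft model theory, is exactly this preservation of the truth of $\forall\exists$-sentences with parameters in $\F_n$ across the inclusion: one must rule out that some universally quantified configuration has a ``bad'' instance in $\F_{n+1}$ that has no counterpart visible in $\F_n$, and symmetrically that a generic existential witness in $\F_{n+1}$ descends. This is precisely where the classification of solution sets of systems of equations (the ``elimination process'' / ``Makanin-Razborov diagrams'') and the corresponding formal solution theorems are indispensable, and where one cannot avoid citing \cite{KM06} or \cite{Sel7} as a black box. Finally, for the ``in particular'' clause: once every $\iota_n$ is elementary, composition gives $\F_m\prec_e\F_n$ for all $2\le m\le n$, hence all $\F_n$ with $n\ge 2$ have the same complete theory; and since $\F_2\prec_e\F_k$ for every finite $k\ge 2$ and every nonabelian free group of higher (even infinite) rank is the union of an elementary chain of the $\F_k$'s by the same argument applied cofinally, all nonabelian free groups are elementarily equivalent. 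I would close by remarking that this theorem is what makes the class $\mathcal F_e$ of the Introduction nonempty and is the foundation for the $e$-Fra\"{i}ss\'{e} analysis that follows.
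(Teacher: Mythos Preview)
The paper does not prove this theorem at all: it is stated as a citation to \cite{KM06} and \cite{Sel7} and used as a black box throughout Section~5. There is therefore no proof in the paper to compare your proposal against.

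That said, a brief comment on your sketch. What you have written is a plausible high-altitude summary of the strategy in the cited works---Merzlyakov-type implicit function theorems, Makanin--Razborov diagrams, and the reduction of arbitrary formulas to a tractable class---but it is not a proof, and you yourself acknowledge this when you write that one ``cannot avoid citing \cite{KM06} or \cite{Sel7} as a black box.'' A couple of specific points: the quantifier-elimination result you invoke (down to boolean combinations of $\forall\exists$-formulas) is itself one of the deepest outputs of the Kharlampovich--Myasnikov and Sela programmes, not an independent tool you can bring in to prove the elementary chain statement; historically the elementarity of $\F_n\hookrightarrow\F_{n+1}$ and the quantifier elimination are established together, not one from the other. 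Also, your closing remark that a free group of infinite rank is the union of an elementary chain of finite-rank free groups is correct and is indeed how one passes from the finite-rank statement to ``all nonabelian free groups,'' but note that this already requires the finite-rank case, so it is not an independent step.

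In short: your proposal is an acceptable narrative gloss on why the theorem is true, of the sort one might put in a survey, but the paper treats the result as an external input and so should you.
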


The {\em theory of the free group} has attracted much attention from both communities, model theorists and group theorists. Apart from its natural models, nonabelian free groups, 
it is satisfied by groups which are not free. As a matter of fact, since the theory of the free group admits uncountably many countable models, the majority of the countable models 
are not free. Despite the complexity of the theory of the free group Kharlampovich-Myasnikov and Sela characterized, group theoretically, its finitely generated models.  

In this section we first give the above mentioned description and then prove that the class of finitely generated elementary free groups is an elementary Fra\"{i}ss\'{e} class. 

The following subsections make heavy use of Bass-Serre theory. We refer the unfamiliar reader to the classical book of Serre \cite{Serre}, or the quick introduction in \cite{Sklinos}. 

\subsection{Hyperbolic towers}

\begin{definition}[Surface type vertices]
Let $G$ be a group acting on a tree $T$ without inversions and $(T^1,T^0,\{\gamma_e\})$ be a Bass-Serre presentation for this action. 
Then a vertex $v\in T^0$ is called a surface type vertex if the following conditions hold:
\begin{itemize}
 \item $\Stab_G(v)=\pi_1(\Sigma)$ for a connected compact surface $\Sigma$ with non-empty boundary such
that either the Euler characteristic of $\Sigma$ is at most -2 or $\Sigma$ is a once punctured torus;
 \item For every edge $e\in T^1$ adjacent to $v$, $\Stab_G(e)$ embeds onto a maximal boundary 
 subgroup of $\pi_1(\Sigma)$, and this induces a one-to-one correspondence between the 
 set of edges (in $T^1$) adjacent to $v$ and the set of boundary components of $\Sigma$.
\end{itemize}

\end{definition}

\begin{definition}[Hyperbolic Floor]
Let $G$ be a group and $H$ be a subgroup of $G$. Then $G$ is a hyperbolic floor over $H$,  
if $G$ acts minimally on a tree $T$ and the action admits a Bass-Serre presentation 
$(T^1, T^0, \{\gamma_e\})$, where the set of vertices of $T^1$ is partitioned in two subsets, $V_S$ and $V_R$, 
such that:
\begin{itemize}
\item each vertex in $V_S$ is a surface type vertex;   
\item the tree $T^1$ is bipartite between $V_S$ and $V_R$; 
\item the subgroup $H$ of $G$ is the free product of the stabilizers of vertices in $V_R$;
\item either there exists a retraction $r:G\to H$ that, for each $v\in V_S$, sends $\Stab_G(v)$ to a non abelian image  
or $H$ is cyclic and there exists a retraction $r': G * \Z \to H * \Z$ that, for each $v\in V_S$, sends $\Stab_G(v)$ to a non abelian image.
\end{itemize}

\end{definition}

 \begin{figure}[ht!]
\centering
\includegraphics[width=.7\textwidth]{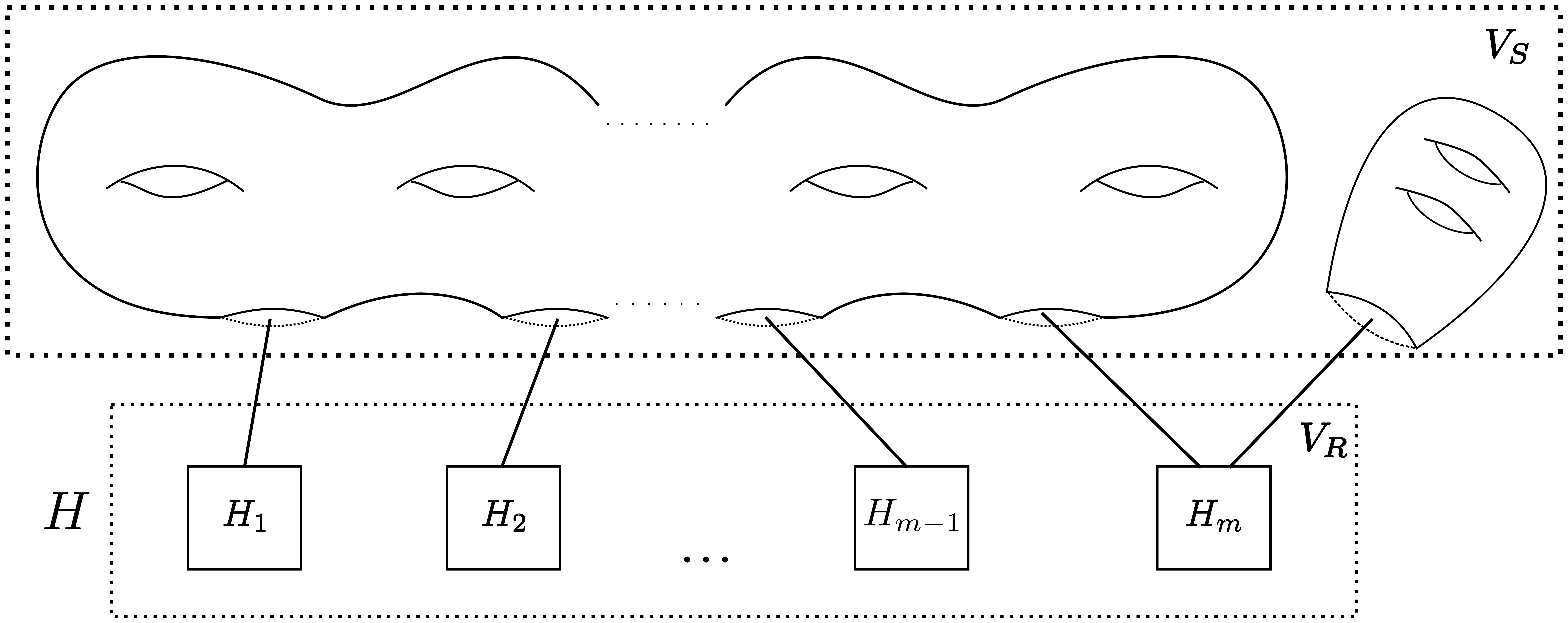}
\caption{A hyperbolic floor.}
\end{figure}

A hyperbolic tower is a sequence of hyperbolic floors and free products with finite rank free groups.

\begin{definition}[Hyperbolic Tower]
A group $G$ is a hyperbolic tower over a subgroup $H$ if there 
exists a sequence $G=G^m>G^{m-1}>\ldots>G^0=H$ such that for each $i$, $0\leq i<m$, 
one of the following holds:
\begin{itemize}
 \item[(i)] the group $G^{i+1}$ has the structure of a hyperbolic floor over $G^i$, in which $H$ is contained in one of the vertex groups that generate $G_i$ in the 
 floor decomposition of $G^{i+1}$ over $G^i$;
 \item[(ii)] the group $G^{i+1}$ is a free product of $G^{i}$ with a finite rank free group.
\end{itemize}
\end{definition}

Hyperbolic towers over a given subgroup amalgamate well in the following sense.

\begin{lemma}\label{AmalgamatedTowers}
Suppose $H_1, H_2$ are hyperbolic towers over $A$. Then the amalgamated free product $H_1*_AH_2$ is a hyperbolic tower over $H_i$ for each $i\leq 2$. 
\end{lemma}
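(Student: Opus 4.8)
The plan is to show that the amalgamated free product $H_1 *_A H_2$ can be realized as a hyperbolic tower over $H_2$ (and symmetrically over $H_1$) by stacking the floors and free-product steps of $H_1$ on top of $H_2$. First I would write out the tower structure of $H_1$ as a sequence $H_1 = G^m > G^{m-1} > \ldots > G^0 = A$, where each step is either a hyperbolic floor or a free product with a finite rank free group, and $A$ sits inside one of the ``rigid'' vertex groups at each floor. The idea is to form the corresponding sequence over $H_2$: set $\hat G^0 = H_2$ and, inductively, $\hat G^{i+1} = \hat G^i *_{G^i} G^{i+1}$. I would then argue that $\hat G^{i+1}$ is a hyperbolic floor over $\hat G^i$ (resp. a free product of $\hat G^i$ with a finite rank free group) whenever $G^{i+1}$ is a hyperbolic floor over $G^i$ (resp. the analogous free-product step). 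Since $\hat G^m = H_1 *_A H_2$ and $\hat G^0 = H_2$, this exhibits $H_1 *_A H_2$ as a hyperbolic tower over $H_2$; by symmetry it is also one over $H_1$.

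The key technical step is the inductive claim that amalgamating a hyperbolic floor ``along its floor subgroup'' with an arbitrary overgroup of that subgroup again gives a hyperbolic floor. Concretely, suppose $G^{i+1}$ is a hyperbolic floor over $G^i$, witnessed by a graph of groups decomposition with surface vertices $V_S$ and rigid vertices $V_R$, with $G^i$ the free product of the rigid vertex stabilizers and a retraction $r \colon G^{i+1} \to G^i$ sending each surface group to a nonabelian image; suppose moreover $G^i$ is a free factor of one of the rigid vertex groups, and $\hat G^i$ is any group containing $G^i$. I would replace that rigid vertex group $R_0$ (which splits as $G^i = R_0' * (\text{rest})$ — more precisely $A$ lies in a single rigid factor) by the amalgam $R_0 *_{G^i} \hat G^i$, keeping all surface vertices and all other rigid vertices unchanged. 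The new graph of groups has the same surface vertices, and its rigid vertex groups now generate $\hat G^i$ as a free product; the bipartite structure is preserved because we only enlarged a rigid vertex group along a subgroup containing all incident edge groups. The retraction extends: compose $r$ on $G^{i+1}$ with the identity on $\hat G^i$ over the common $G^i$, using the universal property of the amalgam, to get $\hat r \colon \hat G^{i+1} \to \hat G^i$; this still sends each surface group to a nonabelian image since $\hat r$ restricted to $G^{i+1}$ is $r$. The free-product step (ii) is easier: $(\hat G^i *_{G^i} (G^i * \F_k)) \cong \hat G^i * \F_k$.

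The main obstacle I anticipate is bookkeeping about \emph{where} $A$ and the relevant subgroups sit inside the rigid vertex groups, and ensuring that after amalgamation the surface-type vertex conditions (edge groups mapping onto maximal boundary subgroups, one-to-one correspondence with boundary components) are genuinely untouched — these conditions are local to the surface vertices, so they should survive, but one must check that no new identifications are forced on the edge groups. A second subtlety is the parenthetical clause in the Hyperbolic Tower definition requiring $A$ (here the base of the tower) to lie in one of the vertex groups generating the floor subgroup; I would need to track that this holds for $\hat G^{i+1}$ over $\hat G^i$, i.e. that $H_2$ sits inside a single rigid vertex group of the new floor, which follows because we amalgamated $\hat G^i \supseteq H_2$ precisely into that rigid vertex. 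The cyclic-$H$ exceptional case in the floor definition (where the retraction is to $H * \Z$ rather than $H$) should be handled in parallel, using that $(\hat G^i *_{G^i} (G^i * \Z)) \cong \hat G^i * \Z$ and extending the retraction $r'$ accordingly. Once the inductive step is in place, the conclusion is immediate by iterating up the tower and invoking symmetry in $H_1, H_2$.
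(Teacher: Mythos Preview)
The paper states this lemma without proof, so there is no argument of the paper's to compare against. Your inductive strategy --- setting $\hat G^i = H_2 *_A G^i$ (so that $\hat G^{i+1} = \hat G^i *_{G^i} G^{i+1}$ and $\hat G^m = H_1 *_A H_2$) and showing that each tower step for $G^{i+1}$ over $G^i$ induces a tower step for $\hat G^{i+1}$ over $\hat G^i$ --- is the natural one and is essentially correct.

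There is one notational slip in your key paragraph. You write that the rigid vertex group $R_0$ should be replaced by ``$R_0 *_{G^i} \hat G^i$'', but this amalgam is ill-formed: $G^i$ is not a subgroup of $R_0$; rather $R_0$ is a free factor of $G^i$ (you also momentarily reverse this when you write ``$G^i$ is a free factor of one of the rigid vertex groups''). The correct replacement is $R_0 *_A H_2$, where $R_0$ is the rigid vertex group containing $A$, leaving the other rigid vertices $R_1,\ldots,R_k$ untouched. With this fix, the free product of the new rigid vertex groups is $(R_0 *_A H_2) * R_1 * \cdots * R_k = (R_0 * R_1 * \cdots * R_k) *_A H_2 = G^i *_A H_2 = \hat G^i$, and the fundamental group of the modified graph of groups is $G^{i+1} *_A H_2 = \hat G^{i+1}$, as required. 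Your extension of the retraction via the universal property (using that $r$ fixes $G^i$ pointwise and hence fixes $A$, so $r$ and $\mathrm{id}_{H_2}$ agree on $A$) is then exactly right, the surface-vertex conditions are local and unaffected, and $H_2$ visibly sits inside the single enlarged rigid vertex $R_0 *_A H_2$. The free-product step and the cyclic exceptional case go through as you indicate.
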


Hyperbolic towers describe finitely generated elementary free groups (or even more generally finitely generated elementary torsion free hyperbolic groups) as well as elementary embeddings amongst torsion free hyperbolic groups. 

\begin{theorem}
A finitely generated group $G$ is elementarily equivalent to a nonabelian free group $F$ if and only if $G$ is a nonabelian hyperbolic tower over $F$. 
\end{theorem}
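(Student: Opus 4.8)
The plan is to prove the two implications separately, in each case peeling off one floor of the tower and invoking the machinery developed for the solution of Tarski's problem.

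\emph{The ``if'' direction.} Suppose $G=G^m>G^{m-1}>\dots>G^0=F$ is a nonabelian hyperbolic tower. I would prove the stronger statement that the inclusion $F\hookrightarrow G$ is an elementary embedding, by induction on $m$. Since at every level the base $F$ sits inside one of the vertex groups generating $G^i$, each $G^i$ contains a copy of $F$, is therefore nonabelian, and is itself a hyperbolic tower over $F$; so it suffices to treat one step $G^{i+1}>G^i$ and show $G^i\preceq G^{i+1}$, then compose. When $G^{i+1}$ is a free product of $G^i$ with a finite-rank free group, $G^i\leq_{\forall}G^{i+1}$ already by Proposition~\ref{forallJEP}; promoting this to an elementary embedding rests on the same formal-solution machinery as below, and in the base case it is just the elementary chain $\F_2\prec\F_3\prec\dots$. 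When $G^{i+1}$ is a genuine hyperbolic floor over $G^i$, the heart of the matter is a Merzlyakov-type theorem on formal solutions: any finite system of equations over $G^i$ that has a solution in $G^{i+1}$ admits a \emph{formal} solution expressed in the generators of the surface vertex groups, which can then be specialized through the retraction $r\colon G^{i+1}\to G^i$ (or $r'\colon G^{i+1}\ast\Z\to G^i\ast\Z$) that keeps the surface images nonabelian. Feeding test sequences along the surface vertices into this lifting machinery — the implicit function theorem over free groups and its relative versions, see \cite{KM},\cite{KM06} — lets one transfer an arbitrary first-order formula with parameters in $G^i$ in both directions between $G^i$ and $G^{i+1}$. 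Hence $G^i\preceq G^{i+1}$, so $F\preceq G$, and in particular $G\equiv F$ with $G$ nonabelian.

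\emph{The ``only if'' direction.} Suppose $G$ is finitely generated with $G\equiv F$. Then $G$ satisfies the universal theory of $F$, so by \cite{KM} $G$ is a nonabelian limit group, in particular finitely presented and equipped with an abelian JSJ decomposition. If $G$ is free we are done, so assume not. I would analyze $G$ through its Makanin--Razborov diagram and apply the shortening argument to the identity homomorphism $G\to G$: not being free, $G$ acts nontrivially and stably on a real tree, and the analysis of limit groups acting on real trees together with elementary equivalence to $F$ forces $G$ to carry the structure of a hyperbolic floor over a proper subgroup $G'$ with $G'\equiv F$. This is precisely where the solution of Tarski's problem is used: the finitely generated models of $\mathrm{Th}(F)$ are exactly the coordinate groups of regular NTQ (quadratic) systems, and these are exactly the nonabelian hyperbolic towers over $F$ \cite{KM06},\cite{Sel7}. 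One then applies the inductive hypothesis to $G'$, whose complexity — the height of its Makanin--Razborov analysis, equivalently the number of quadratic levels — is strictly smaller, terminating at a free group; stacking the floors recovered at each step produces the tower decomposition of $G$.

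\emph{Main obstacle.} The genuinely hard part is the ``only if'' direction: producing a hyperbolic floor structure from the bare hypothesis $G\equiv F$ is tantamount to the positive solution of Tarski's problem and requires the full Rips-machine analysis of limit groups acting on real trees, the shortening argument, and the classification of NTQ coordinate groups; in practice one does not reprove this but cites \cite{KM06} and \cite{Sel7}. The ``if'' direction, while far from trivial, is comparatively self-contained once the Merzlyakov-style lifting of formal solutions over surface subgroups is in hand.
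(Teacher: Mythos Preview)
The paper does not give its own proof of this theorem: it is recorded as a known deep result --- the characterization of finitely generated models of the theory of nonabelian free groups, due to Kharlampovich--Myasnikov \cite{KM06} and Sela \cite{Sel7} --- and the two theorems stated immediately after it (Theorems~\ref{TowerElem} and~\ref{ElemTower}) are the related statements the paper actually invokes later. So there is nothing in the paper to compare your argument against beyond those citations.

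Your outline is a fair high-level sketch of what goes into those references, and you are right that the ``only if'' direction is the substantive one, essentially equivalent to the positive solution of Tarski's problem via the NTQ/tower description of coordinate groups; you are also honest that one ultimately cites \cite{KM06}, \cite{Sel7} rather than reproving it. Two small remarks. First, your ``if'' direction --- proving the stronger fact $F\preceq G$ floor by floor via formal solutions and the implicit function theorem --- is exactly the content of Theorem~\ref{TowerElem}, so in the paper's logic this direction is simply that theorem. Second, note that Perin's Theorem~\ref{ElemTower} is a converse for elementary \emph{embeddings}, not for mere elementary equivalence: it does not by itself yield the ``only if'' direction here, since a priori a finitely generated $G\equiv F$ need not contain $F$ as an elementary substructure. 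One must, as you indicate, go through the NTQ classification of finitely generated models directly (which in particular shows such $G$ are hyperbolic and do contain $F$ elementarily).
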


\begin{theorem}[Kharlampovich-Myasnikov, Sela]\label{TowerElem}
Let $G$ be torsion-free hyperbolic. Let $G$ be a hyperbolic tower over a nonabelian subgroup $H$. Then $H$ is an 
elementary subgroup of $G$.
\end{theorem}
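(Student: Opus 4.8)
The plan is to reduce the statement about hyperbolic towers to the building-block case of a single hyperbolic floor, and then to argue that in each such floor the subgroup $H$ is elementarily embedded. First I would observe that since $G = G^m > G^{m-1} > \ldots > G^0 = H$ is a sequence of hyperbolic floors and free products with finite-rank free groups, and since elementary embeddings compose, it suffices to treat two cases: (i) $G$ is a hyperbolic floor over $H$ (with $H$ nonabelian, or in the degenerate cyclic case handled separately), and (ii) $G = H * \F_k$ is a free product with a finite-rank free group. Case (ii) is essentially already available: a nonabelian free factor is elementarily embedded in a free product (this is part of the machinery behind the fact that the $\F_n$'s form an elementary chain, and can be cited from \cite{KM06} or \cite{Sel7}), and more generally one shows that for a nonabelian subgroup the inclusion $H \hookrightarrow H * \F_k$ is elementary. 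So the heart of the matter is Case (i).

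For Case (i), the key tool is the retraction $r : G \to H$ that is part of the hyperbolic floor structure and sends each surface-type vertex group to a nonabelian image. The strategy, following Sela's "test sequences" / formal solutions approach and Kharlampovich–Myasnikov's implicit function theorem techniques, is to show that this retraction implies that every first-order formula with parameters in $H$ that holds in $G$ already holds in $H$. Concretely, for a given sentence $\phi(\bar{a})$ with $\bar{a} \in H$: if $G \models \exists \bar{x}\, \psi(\bar{x}, \bar{a})$ witnessed by a tuple $\bar{g} \in G$, applying the retraction $r$ gives $r(\bar{g}) \in H$; the difficulty is that $r$ need not be injective, so $\psi(r(\bar{g}), \bar{a})$ need not hold in $H$. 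The resolution is to combine the retraction with the surface-vertex structure: one uses that the stabilizers of surface-type vertices behave like fundamental groups of surfaces with boundary, where solutions to equations can be "moved" by Dehn twists and the surface group can absorb the combinatorial complexity, so that after composing $r$ with suitable automorphisms coming from the surface (and using that the boundary subgroups are respected), one produces genuine witnesses in $H$. This is exactly the content that makes the floor construction produce elementary subgroups, and it is where the nonabelian image condition is essential.

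The main obstacle, then, is the quantifier-elimination / formal-solution argument at a single surface-type vertex: one must show that the existence of a solution of a system of equations and inequations over $H$ in the ambient floor $G$ can be pulled back to a solution over $H$, using the retraction together with the modular group of automorphisms of the surface. I would handle this by invoking the theory of \emph{completions of towers and closures} (Sela) or equivalently the \emph{implicit function theorem over free groups} (Kharlampovich–Myasnikov): the retraction $r$, refined using the surface structure, realizes $H$ as an elementarily embedded subgroup because every constructible family of solutions over $H$ in $G$ descends, modulo the modular automorphisms, to solutions in $H$. Since both $G$ and $H$ are torsion-free hyperbolic (so that the relevant results on equations, the structure of limit groups over them, and JSJ/modular automorphisms apply), the argument goes through uniformly.

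For the record, the proof is fundamentally the same as the one establishing that the standard embeddings $\F_2 \subset \F_3 \subset \ldots$ are elementary (cited above), simply relativized to an arbitrary torsion-free hyperbolic base; so beyond the reduction to a single floor, I would not reprove the deep quantifier-level statement but cite \cite{KM06} and \cite{Sel7}, noting only that their arguments are stated in sufficient generality (for towers over torsion-free hyperbolic groups, not merely over free groups) to yield the claim.
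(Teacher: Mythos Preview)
The paper does not prove this theorem at all: it is stated as a known result attributed to Kharlampovich--Myasnikov \cite{KM06} and Sela \cite{Sel7}, and is used as a black box in the subsequent arguments (notably in the proof of Lemma~\ref{strongeAP}). Your sketch is a reasonable outline of the actual strategy from those references---reduction to a single floor, then the deep formal-solutions/implicit-function-theorem argument---but you should be aware that the present paper simply cites the result rather than reproving it, so no comparison of proofs is possible.
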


\begin{theorem}[Perin \cite{Perin}]\label{ElemTower}
Let $G$ be a torsion-free hyperbolic group. Let $H$ be an elementary subgroup of $G$. 
Then $G$ is a hyperbolic tower over $H$. 
\end{theorem}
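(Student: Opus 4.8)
The plan is to follow the route of Sela and Perin: combine the theory of JSJ decompositions of torsion-free hyperbolic groups \emph{relative to $H$} with Sela's shortening argument, and to induct on a suitable complexity of $G$ over $H$. First one shows that $H$ is finitely generated and quasiconvex in $G$, hence itself torsion-free hyperbolic (and nonabelian, since nonabelianness is first-order and is inherited from $G$); this rests on a shortening-type argument ruling out arbitrarily ``thin'' configurations inside an elementarily embedded subgroup. Next, if $G$ splits nontrivially as a free product, a Grushko/Kurosh-type argument (again using shortening) shows $H$ is conjugate into a single free factor $G_1$, with $H$ still elementary in $G_1$; the remaining free factors are peeled off as the free-product floors of a hyperbolic tower, so we may assume $G$ is freely indecomposable relative to $H$. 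If $G=H$ we are done (the empty tower), so assume $H$ is a proper subgroup.

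Let $\Lambda$ be the cyclic JSJ decomposition of $G$ relative to $H$ (refined to the abelian JSJ or Sela's analysis lattice if needed), arranged so that $H$ lies inside a non-surface vertex group. The heart of the proof is to use the \emph{full} strength of ``$H$ is elementary in $G$'' to produce a \emph{non-degenerate preretraction} of $G$ onto $H$ with respect to $\Lambda$: a homomorphism $\pi\colon G\to H$ that is (conjugate to) the identity on $H$, on every rigid vertex group and on every edge group of $\Lambda$, and whose restriction to at least one surface-type vertex group has nonabelian image. Using quasiconvexity of $H$ one first arranges that $G$ is finitely presented over $H$, so that the existential sentence over $H$ asserting ``there exist generators satisfying the defining relations of $G$'' transfers from $G$ to $H$ and produces a retraction $G\to H$ fixing $H$. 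The real work is to make it non-degenerate: were this impossible, $G$ would be ``rigid'' over $H$ (only finitely many retractions onto $H$ modulo the modular group of $\Lambda$), and Sela's first-order description of such configurations, together with the elementarity of $H$ in $G$, would force $G=H$, contradicting our assumption. Extracting the preretraction concretely uses test sequences, a Bestvina--Paulin/Rips-machine limiting argument, and Merzlyakov-type formal solutions over towers.

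Given a non-degenerate preretraction $\pi\colon G\to H$ with respect to $\Lambda$, Sela's structure theorem yields that $G$ carries the structure of a hyperbolic floor over the subgroup $\hat H\leq G$ generated by $H$ together with the rigid vertex groups of $\Lambda$: the surface-type vertices of $\Lambda$ become $V_S$, the rigid ones $V_R$, and $\pi$ supplies the retraction $G\to\hat H$ sending each surface group to a nonabelian image, with $H\leq\hat H$. By Theorem~\ref{TowerElem}, $\hat H$ is elementary in $G$; since $H$ is elementary in $G$ as well and $H\leq\hat H\leq G$, a standard model-theoretic fact gives that $H$ is elementary in $\hat H$. As $\hat H$ is a proper subgroup assembled from the rigid pieces, its complexity over $H$ is strictly smaller than that of $G$, so by the induction hypothesis $\hat H$ is a hyperbolic tower over $H$ with $H$ lying in the required vertex group; prepending the floor $G$ over $\hat H$ exhibits $G$ as a hyperbolic tower over $H$.

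The decisive and by far hardest step is the middle one: extracting a \emph{non-degenerate} preretraction from elementarity rather than merely from existential closedness. This requires the full machinery behind Sela's solution to Tarski's problem --- test sequences, the shortening argument over towers, formal solutions of Merzlyakov type, and the first-order ``Diophantineness'' of JSJ-type decompositions relative to $H$ --- which I would not attempt to reconstruct in detail. By contrast the free-product reductions and the passage from a preretraction to an actual floor, though technical, are comparatively soft, and the quasiconvexity of elementarily embedded subgroups is an independent preliminary resting again on the shortening argument.
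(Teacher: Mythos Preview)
The paper does not prove this theorem: it is stated with attribution to Perin \cite{Perin} and used as a black box, with no accompanying proof in the text. So there is no ``paper's own proof'' to compare your proposal against.

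That said, your outline is a reasonable high-level sketch of Perin's actual argument. The broad architecture---reduce to the freely indecomposable case relative to $H$, take the cyclic JSJ relative to $H$, use elementarity plus the shortening argument to produce a non-degenerate preretraction, promote that to a hyperbolic floor over a smaller subgroup $\hat H$ containing $H$, and induct---is correct. A couple of points where your sketch drifts from Perin's paper: the quasiconvexity and finite generation of $H$ are not established via shortening but are obtained as consequences of the tower structure itself (Perin proves the tower result first and deduces quasiconvexity as a corollary, not the other way around); and the induction parameter is not vaguely ``complexity over $H$'' but rather a specific count involving the number of surface pieces and first Betti numbers, which one must verify strictly decreases when passing from $G$ to $\hat H$. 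Your candid admission that the extraction of a non-degenerate preretraction is the crux and that you would not reconstruct it is accurate---that step occupies the bulk of Perin's paper.
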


\noindent{\bf Convention:} From now on we restrict our attention to finitely generated elementary free groups. A hyperbolic tower, unless otherwise specified, will always mean a hyperbolic tower over $F$.

We work towards proving.

\begin{theorem}\label{FraisseFree}
The class $\mathcal F_e$  of finitely generated models of the theory of the free group is an elementary Fra\"{i}sse class. 
\end{theorem}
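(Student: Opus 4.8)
The plan is to verify the four conditions (IP), (e-HP), (e-JEP), and (strong e-AP) from Definition \ref{e-Fraisse} for the class $\mathcal{F}_e$ of finitely generated elementary free groups, reducing everything to the structure theory of hyperbolic towers recalled above. (IP) is trivial: being elementarily equivalent to $F$ is an isomorphism invariant. For (e-HP), if $G\in\mathcal{F}_e$ and $H\prec_e G$ is finitely generated, then $H$ is elementarily equivalent to $F$, hence $H\in\mathcal{F}_e$; so the class is closed under finitely generated elementary substructures. (Note that a finitely generated elementary substructure of a finitely generated group is automatically finitely generated, so there is nothing delicate here.) The content is in (e-JEP) and (strong e-AP), and both will be extracted from Lemma \ref{AmalgamatedTowers} together with Theorems \ref{TowerElem} and \ref{ElemTower}.

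For (e-JEP): given $G_1, G_2\in\mathcal{F}_e$, each $G_i$ is a nonabelian hyperbolic tower over $F$ by the structure theorem, so in particular each $G_i$ contains a copy of $F$ as an elementary subgroup (Theorem \ref{TowerElem}). First I would form the amalgam $B := G_1 *_F G_2$ over this common $F$. By Lemma \ref{AmalgamatedTowers}, $B$ is a hyperbolic tower over each $G_i$, hence also over $F$ (composing towers), so $B$ is a nonabelian hyperbolic tower over $F$; but $B$ need not be finitely generated --- it is, in fact ($G_1$ and $G_2$ are finitely generated), so $B\in\mathcal{F}_e$ and $G_i\prec_e B$ by Theorem \ref{TowerElem}. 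This gives the required elementary embeddings $f_i\colon G_i\hookrightarrow B$.

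For (strong e-AP): suppose $G_0, G_1, G_2\in\mathcal{F}_e$, $\bar a$ a tuple from $G_0$, and $f_i\colon\bar a\to G_i$ partial elementary maps. The key observation is that the statement "$g_1\circ f_1(\bar a) = g_2\circ f_2(\bar a)$" only constrains the images of $\bar a$, not all of $G_0$, so I do not actually need to glue $G_1$ and $G_2$ along a subgroup isomorphism --- I only need to glue them along the images $f_1(\bar a)$ and $f_2(\bar a)$, which satisfy the same type over the prime model $F$. The clean route: inside each $G_i$, pass to the elementary subgroup $F\prec_e G_i$ (exists by the tower structure), and observe that $\mathrm{tp}^{G_1}(f_1(\bar a)) = \mathrm{tp}^{G_0}(\bar a) = \mathrm{tp}^{G_2}(f_2(\bar a))$. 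I would then want to realize both tuples over a common $F$, say by using homogeneity of the ambient free group of infinite rank $\F_\omega$ (or its Lyndon completion, cf.\ Theorem \ref{th:Fraisse-H}), to find an isomorphism of the subgroups $\langle F, f_1(\bar a)\rangle$ and $\langle F, f_2(\bar a)\rangle$ fixing $F$ and matching the tuples; however these need not be elementary subgroups, so instead I would enlarge: let $H_i$ be the subgroup of $G_i$ generated by $F$ together with $f_i(\bar a)$ (a limit group, but possibly not elementary in $G_i$), and replace it by a finitely generated elementary subgroup $\widehat{H_i}$ of $G_i$ containing it --- here one uses that $G_i$ is a hyperbolic tower, so by Theorem \ref{ElemTower} applied appropriately, elementary subgroups are cofinal among finitely generated subgroups. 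Having arranged $f_1(\bar a)$ and $f_2(\bar a)$ to live in $G_1$ and $G_2$ with a common distinguished copy of $F$, I would amalgamate $G_1 *_F G_2$ as in the (e-JEP) step; by Lemma \ref{AmalgamatedTowers} this is a hyperbolic tower over each $G_i$ and over $F$, hence in $\mathcal{F}_e$, and the inclusions $g_i\colon G_i\hookrightarrow G_1 *_F G_2$ are elementary by Theorem \ref{TowerElem}. Finally $g_1(f_1(\bar a))$ and $g_2(f_2(\bar a))$ have the same type over $F$ in $G_1 *_F G_2$; applying homogeneity of the Fra\"iss\'e-type limit ambient group --- or, more self-containedly, post-composing $g_1$ with an automorphism of $G_1 *_F G_2$ obtained from another application of Lemma \ref{AmalgamatedTowers} --- I can force $g_1\circ f_1(\bar a) = g_2\circ f_2(\bar a)$.

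\textbf{Main obstacle.} The delicate point is the matching of the two tuples: (strong e-AP) demands an \emph{honest equality} $g_1\circ f_1(\bar a) = g_2\circ f_2(\bar a)$ inside $B$, and "same type" alone does not immediately give this unless one can move one tuple onto the other by an automorphism of $B$. So the heart of the argument is to show that $B = G_1 *_F G_2$ (or a suitable finitely generated elementary tower extension of it) is homogeneous enough --- equivalently, that two tuples of $B$ realizing the same type over $F$ are conjugate by an automorphism of $B$. I expect this to follow by iterating Lemma \ref{AmalgamatedTowers}: glue a second copy of $B$ to $B$ along the subgroup generated by $F$ and $g_1\circ f_1(\bar a)$ versus $F$ and $g_2\circ f_2(\bar a)$ --- after first upgrading these to finitely generated elementary subgroups via Theorem \ref{ElemTower} so that the gluing produces a tower --- and then using Theorem \ref{TowerElem} to get elementary embeddings back, exactly paralleling the back-and-forth used in the general strong $e$-Fra\"iss\'e theorem. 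Checking that all the relevant subgroups can be taken to be finitely generated elementary subgroups (so that the tower machinery applies at each step) is the one genuinely technical ingredient, and it is where Perin's theorem (Theorem \ref{ElemTower}) does the real work.
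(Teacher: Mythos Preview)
Your handling of (IP), (e-HP), and (e-JEP) is fine; the paper does (e-JEP) via the free product $H_1\ast H_2$ and Theorem~\ref{FreeProductFreeGroup}, while you amalgamate over $F$, but either works.

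The genuine gap is in (strong e-AP). You correctly isolate the obstacle---turning ``same type'' into an honest equality---but your proposed fix does not close it. Your plan is to find finitely generated elementary subgroups $E_1,E_2\prec_e B$ containing $g_1\circ f_1(\bar a)$ and $g_2\circ f_2(\bar a)$ (via Theorem~\ref{ElemTower}), and then glue two copies of $B$ along an isomorphism $E_1\cong E_2$ matching the tuples. But nothing you have written produces such an isomorphism: two arbitrary finitely generated elementary subgroups containing tuples of the same type need not be isomorphic at all (e.g.\ take a primitive element in $\F_2\prec_e\F_3$). Appealing to homogeneity of $\F_\omega$ or of the Fra\"{i}ss\'{e} limit is circular here, since those homogeneity statements are consequences of the very (strong e-AP) you are proving.

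The paper resolves this by passing not to \emph{some} elementary subgroup containing the tuple, but to a \emph{minimal} hyperbolic subtower $T_i$ of $H_i$ containing $f_i(\bar a)$. Minimality forces $T_i$ to be freely indecomposable relative to the tuple and to admit no hyperbolic floor over a smaller subgroup containing it; then Lemma~\ref{retractionHypFloor} (from \cite{PS12}) yields mutual injections $T_1\hookrightarrow T_2\hookrightarrow T_1$ matching the tuples, and the relative co-Hopf property (Lemma~\ref{CoHopf}) upgrades these to an isomorphism $h:T_1\to T_2$ with $h(f_1(\bar a))=f_2(\bar a)$. One then takes $D=H_1\ast_{T_1}H_2$ using $h$, which is a tower over each $H_i$ by Lemma~\ref{AmalgamatedTowers}. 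The minimal-subtower + co-Hopf step is the missing idea in your argument; once you insert it, your overall architecture matches the paper's.
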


\begin{definition}
Let $H$ be a hyperbolic tower. A subgroup $T$ of $H$ is a hyperbolic subtower if $H$ is a hyperbolic tower over $T$. 
\end{definition}

We remark that a free factor of a hyperbolic tower is a subtower because (see \cite[Definition 7.5 and the paragraph after]{Sel7} and \cite[Corollary 9.2 and Theorems E and F]{KMLifting}):

\begin{theorem}[Kharlampovich-Myasnikov, Sela]\label{FreeProductFreeGroup}
Let $G:=G_1*G_2*\ldots *G_n$ be a model of the theory of the free group. Then for each $i$, either $G_i$ is a model of the theory of the free group or it is infinite cyclic. 
\end{theorem}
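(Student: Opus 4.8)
\textbf{Proof plan for Theorem \ref{FreeProductFreeGroup}.}
The plan is to deduce this statement from the characterization of finitely generated elementary free groups as hyperbolic towers over $F$ (Theorem preceding the Convention), together with Theorem \ref{TowerElem} and Theorem \ref{ElemTower} describing elementary subgroups of torsion-free hyperbolic groups. First I would dispense with the abelian case: if some $G_i$ is abelian, then since $G$ is finitely generated fully residually free (being a model of the theory of the free group), $G_i$ is free abelian of finite rank, and a free abelian direct free factor of a limit group must be cyclic — otherwise $\Z^2$ embeds and $G$ contains a copy of $\Z^2$ as a free factor, contradicting that nonabelian free factors are required and, more directly, the Grushko/Kurosh structure forces a free factor of a model of $Th(F)$ to again be elementarily equivalent to a free group or cyclic once we know the nonabelian case. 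So the real content is: if $G_i$ is nonabelian, then $G_i \equiv F$.

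The key step is to show each nonabelian free factor $G_i$ is an elementary subgroup of $G$, hence (being finitely generated and, as a free factor of a model of $Th(F)$, torsion-free hyperbolic — or at least embeddable appropriately) elementarily equivalent to $F$. I would proceed as follows: write $G = G_1 * \cdots * G_n$ and observe that $G$ acts on the Bass–Serre tree of this free splitting; each $G_i$ is the stabilizer of a vertex, and the splitting has trivial edge groups, so every vertex of non-$G_i$-type either has nonabelian stabilizer (another factor) or the decomposition can be arranged so that the tree is bipartite with the $G_j$-vertices on one side and valence-one or "regular" vertices on the other. This is exactly the shape of a hyperbolic floor decomposition with \emph{no} surface-type vertices: the floor retraction $r : G \to G_i$ is the obvious projection killing the other free factors (which is a retraction since edge groups are trivial), and it sends every surface vertex group to a nonabelian image vacuously because there are no surface vertices. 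Thus $G$ is a (degenerate) hyperbolic floor, and iterating, a hyperbolic tower over each nonabelian $G_i$. By Theorem \ref{TowerElem}, $G_i \prec G$, and since $G \equiv F$ we get $G_i \equiv F$, i.e. $G_i$ is a model of the theory of the free group.

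The main obstacle I anticipate is justifying that the free-product projection genuinely fits the \emph{definition} of a hyperbolic floor (or a tower of such) — in particular handling the degenerate case where the retraction target is cyclic, which is precisely why the cited definition of hyperbolic floor includes the clause "or $H$ is cyclic and there exists a retraction $r' : G * \Z \to H * \Z$". When $G_i$ is nonabelian this clause is not needed, but one must be careful that the partition of vertices into $V_S$ and $V_R$ and the requirement that $H$ be the free product of the $V_R$-stabilizers can always be met; this is standard but needs the observation that we may refine or collapse the Grushko decomposition so that $G_i$ sits as a single vertex group and all other factors are distributed among $V_R$, with $V_S = \emptyset$. Alternatively — and this is the cleaner route I would actually take — I would simply cite the references indicated in the text (\cite[Definition 7.5ff]{Sel7}, \cite[Corollary 9.2, Theorems E and F]{KMLifting}), where the statement "a free factor of a hyperbolic tower is a hyperbolic subtower" is established, and combine it with Theorem \ref{TowerElem}: since $G$ itself is a hyperbolic tower over $F$ and $G_i$ is a free factor of $G$, $G_i$ is a hyperbolic subtower, hence $G$ is a hyperbolic tower over $G_i$, hence (if $G_i$ is nonabelian) $G_i \prec G \equiv F$. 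The only remaining point is the dichotomy "$G_i$ nonabelian or $G_i$ cyclic", which follows from the fact that every abelian subgroup of a model of $Th(F)$ is cyclic (these groups are commutative transitive and, being limit groups in the finitely generated case, have finitely generated abelian subgroups, and a noncyclic free abelian free factor is impossible in a $CSA$ group with this property).
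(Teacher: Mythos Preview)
The paper does not give its own proof of this theorem: it is stated as a result of Kharlampovich--Myasnikov and Sela, with references to \cite{Sel7} and \cite{KMLifting}, and is then \emph{used} to justify the remark immediately preceding it (``a free factor of a hyperbolic tower is a subtower''). So there is no in-paper proof to compare against; your proposal is an attempt to supply one.

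Your route~1 does not go through with the definitions in the paper. A hyperbolic floor, as defined here, requires a bipartite splitting between surface-type vertices $V_S$ and rigid vertices $V_R$, with each surface vertex carrying a surface of Euler characteristic $\le -2$ (or a once-punctured torus) and edges matching boundary components. Taking $V_S=\emptyset$ forces the minimal tree to be a single vertex, hence $G=H$; so the ``degenerate floor with no surface vertices'' does not give a nontrivial tower step. Clause~(ii) of the tower definition only covers free product with a finite-rank \emph{free} group, which you cannot assume of the remaining free factors $G_j$ without already invoking the theorem you are trying to prove.

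Your route~2 is circular relative to the paper's logic: the paper deduces ``a free factor of a hyperbolic tower is a subtower'' \emph{from} Theorem~\ref{FreeProductFreeGroup}, not the other way around. If you want a self-contained argument in the spirit of this paper, the missing ingredient is an inductive proof (on the height of the tower structure of $G$) that each floor decomposition of $G$ induces a compatible tower structure over any nonabelian free factor --- this is essentially the content of the external results cited. Finally, your abelian case is slightly roundabout: the clean reason an abelian free factor must be infinite cyclic is that finitely generated models of $Th(\F)$ are torsion-free hyperbolic, and torsion-free hyperbolic groups contain no copy of $\Z^2$.
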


We define a partial order on the class of hyperbolic subtowers of a given tower $H$: 
if $T_1, T_2$ are hyperbolic subtowers of $H$ we say that $T_1$ is smaller than $T_2$ if $T_2$ is a hyperbolic tower over $T_1$. It is 
not hard to see, using the descending chain condition for limit groups, that this partial order admits minimal elements. We prove that 
minimal hyperbolic subtowers (relative to any set of parameters) are unique up to isomorphism.

\begin{proposition}\label{MinimalTowers}
Let $H$ be a tower and $\bar{a},\bar{b}$ be tuples from $H$. Suppose $H_1$ (respectively $H_2$) is a minimal hyperbolic subtower of $H$ that contains $\bar{a}$ (respectively $\bar{b}$). Let  $tp^{H_1}(\bar{a})=tp^{H_2}(\bar{b})$. 
Then there exists an isomorphism $h:H_1\rightarrow H_2$ taking $\bar{a}$ to $\bar{b}$. 
\end{proposition}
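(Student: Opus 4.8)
The plan is to prove that minimal hyperbolic subtowers are unique up to type-preserving isomorphism by combining the characterizations of hyperbolic towers and elementary embeddings (Theorems \ref{TowerElem} and \ref{ElemTower}) with the amalgamation property for towers over a common subgroup (Lemma \ref{AmalgamatedTowers}). First I would observe that since $H_1$ is a hyperbolic tower containing $\bar a$ and $H_2$ is one containing $\bar b$, and both are finitely generated elementary free groups, they are elementarily equivalent to $F$; moreover each $H_i$ is itself a hyperbolic tower over $F$ (after possibly replacing $F$ by a suitable free factor), so that $F \prec_e H_i$ by Theorem \ref{TowerElem}. Working inside a common ambient structure is the natural route: since $tp^{H_1}(\bar a)=tp^{H_2}(\bar b)$ and both $H_i$ are models of the theory of the free group, I would realize $H_1$ and $H_2$ amalgamated over the partial elementary map $\bar a \mapsto \bar b$ — concretely, take a sufficiently saturated elementary extension $\mathcal N$ of (a common elementary extension of) $H_1$ and $H_2$ in which there are elementary embeddings $\iota_1:H_1\to\mathcal N$, $\iota_2:H_2\to\mathcal N$ with $\iota_1(\bar a)=\iota_2(\bar b)$.

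Next I would use Perin's theorem (Theorem \ref{ElemTower}), applied in the finitely generated setting: I want to produce a single finitely generated elementary free group $K$ into which both $H_1$ and $H_2$ elementarily embed with matching images of $\bar a$ and $\bar b$. This is exactly where Lemma \ref{AmalgamatedTowers} enters. The subgroup $A=\langle\bar a\rangle=\langle\bar b\rangle$ (identified via the type-preserving map) sits inside both $H_1$ and $H_2$; I would first need $A$ — or rather a canonical tower between $A$ and the $H_i$ — to be a common hyperbolic subtower, but that is not automatic since $A$ need not itself be a tower. The fix is to note that by minimality of $H_1$ and $H_2$ there is no proper subtower of $H_i$ containing $\bar a$ (resp. $\bar b$); then one forms the amalgam $H_1 *_A H_2$, which by Lemma \ref{AmalgamatedTowers} is a hyperbolic tower over each $H_i$, hence a model of the theory of the free group and an elementary extension of each $H_i$ by Theorem \ref{TowerElem}. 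Inside $H_1*_A H_2$ both $H_1$ and $H_2$ are elementary subgroups with $\bar a = \bar b$.

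Now the key step: $H_1$ is a minimal subtower of $H_1*_A H_2$ containing $\bar a$, and $H_2$ is a minimal subtower containing $\bar b = \bar a$. By Perin's theorem $H_1*_A H_2$ is also a hyperbolic tower over $H_2$, and one can descend: since $H_1$ is a subtower containing $\bar a$, minimality of $H_2$ forces a comparison between $H_1$ and $H_2$. The precise argument I expect is a ``co-minimality'' one: pass to a minimal subtower $H_0$ of $H_1 *_A H_2$ containing $\bar a$; by minimality $H_0$ must be isomorphic to $H_1$ (as $H_1$ is already minimal inside $H_1*_A H_2$, using the descending chain condition and that any subtower of $H_1$ inside the amalgam containing $\bar a$ is a subtower of $H_1$) and likewise isomorphic to $H_2$, and the composite isomorphism $H_1 \to H_0 \to H_2$ can be arranged to fix $\bar a$ and send it to $\bar b$. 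To make ``can be arranged'' rigorous I would track base points: the minimal subtower construction can be performed relative to the tuple, and the isomorphism obtained from two minimal subtowers over the same tuple inside a fixed ambient tower is unique enough to be taken identity on that tuple.

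The main obstacle I anticipate is the last uniqueness step — showing that a minimal hyperbolic subtower of a fixed tower containing a fixed tuple is genuinely unique (not merely that all such have isomorphic underlying groups), together with verifying that $H_1$ really is minimal as a subtower of the enlarged group $H_1*_AH_2$ and not just of $H_1$ itself. This requires a careful analysis of how subtower decompositions of $H_1*_A H_2$ restrict to $H_1$, presumably via Bass-Serre theory: a JSJ-type argument showing that the surface and rigid vertex structure of a floor decomposition of $H_1*_AH_2$ over a subtower, when intersected with $H_1$, yields a floor decomposition of $H_1$, so that descending subtowers of the amalgam stay inside $H_1$ once they contain $\bar a$. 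The descending chain condition for limit groups then terminates the descent, and minimality of $H_1$ and $H_2$ pins down the common value, giving the desired isomorphism $h:H_1\to H_2$ with $h(\bar a)=\bar b$.
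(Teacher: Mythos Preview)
Your approach has a genuine gap at its core. You want to apply Lemma \ref{AmalgamatedTowers} to $H_1 *_A H_2$ with $A = \langle \bar a\rangle = \langle \bar b\rangle$, but that lemma requires $H_1$ and $H_2$ to be hyperbolic towers \emph{over} $A$. Minimality of $H_i$ says exactly the opposite: there is no proper subtower of $H_i$ containing the tuple, so (except in degenerate cases) $H_i$ is \emph{not} a tower over $\langle \bar a\rangle$. You notice this difficulty (``$A$ need not itself be a tower'') but then invoke Lemma \ref{AmalgamatedTowers} anyway; the ``fix'' you sketch is not a fix. The subsequent steps --- analyzing subtowers of $H_1 *_A H_2$ via Bass--Serre/JSJ restrictions --- never get off the ground because the amalgam has no reason to be a tower, let alone an elementary extension of the $H_i$. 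In fact the logical order in the paper is the reverse of what you attempt: Proposition \ref{MinimalTowers} is used to \emph{produce} the common subtower needed for Lemma \ref{AmalgamatedTowers} in the proof of strong $e$-AP (Lemma \ref{strongeAP}), not the other way around.

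The paper's argument avoids amalgamation entirely and is short. Minimality of $H_1$ implies $H_1$ is freely indecomposable relative to $\langle \bar a\rangle$ and admits no hyperbolic floor over a subgroup containing $\bar a$. One then invokes Lemma \ref{retractionHypFloor} (from \cite{PS12}): under equality of types, either $H_1$ embeds into $H_2$ sending $\bar a$ to $\bar b$, or $H_1$ has such a floor structure. Minimality kills the second alternative, giving an injection $h_1:H_1\to H_2$; symmetrically one gets $h_2:H_2\to H_1$. The composition $h_2\circ h_1$ is an injective endomorphism of $H_1$ fixing $\bar a$, hence surjective by the relative co-Hopf property (Lemma \ref{CoHopf}), so $h_1$ is the desired isomorphism. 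The two ingredients you are missing are precisely Lemmas \ref{retractionHypFloor} and \ref{CoHopf}.
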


In order to prove Proposition \ref{MinimalTowers} we use the following results.

\begin{lemma}[\cite{PS12}]\label{retractionHypFloor}
Let $H_1, H_2$ be torsion-free hyperbolic groups. Suppose $H_1$ is freely indecomposable with respect to a nontrivial subgroup $\langle\bar{a}\rangle$ and $tp^{H_1}(\bar{a})=tp^{H_2}(\bar{b})$ for some $\bar{b}\in H_2$. 
Then either there exists an embedding 
$h:H_1\rightarrow H_2$ that sends $\bar{a}$ to $\bar{b}$ or $H_1$ admits the structure of a hyperbolic floor over a subgroup that contains $\langle\bar{a}\rangle$. 
\end{lemma}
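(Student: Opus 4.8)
The plan is to split the statement into a \emph{soft} part (producing a homomorphism) and a \emph{hard} part (deciding injectivity via the shortening argument). First I would exploit that a torsion-free hyperbolic group is finitely presented. Write $H_1=\langle\bar{a},\bar{c}\mid R\rangle$ with $R$ finite, and form the existential formula
$\psi(\bar{x}):=\exists\bar{y}\,\bigwedge_{w\in R}w(\bar{x},\bar{y})=1$.
Since $H_1\models\psi(\bar{a})$ (witnessed by $\bar{y}=\bar{c}$) and $tp^{H_1}(\bar{a})=tp^{H_2}(\bar{b})$, we get $H_2\models\psi(\bar{b})$, i.e. a tuple $\bar{d}\in H_2$ such that $\bar{a}\mapsto\bar{b}$, $\bar{c}\mapsto\bar{d}$ respects every relator and hence defines a homomorphism $h\colon H_1\to H_2$ with $h(\bar{a})=\bar{b}$. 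If some homomorphism in $\mathrm{Hom}_{\bar{a}\to\bar{b}}(H_1,H_2)$ is injective we are in the first alternative and done, so assume from now on that \emph{no} homomorphism $H_1\to H_2$ sending $\bar{a}$ to $\bar{b}$ is injective; the goal is to force the hyperbolic floor structure.

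Next I would invoke the deformation theory of $H_1$ relative to $\langle\bar{a}\rangle$. Because $H_1$ is freely indecomposable with respect to $\langle\bar{a}\rangle$, its cyclic JSJ decomposition relative to $\langle\bar{a}\rangle$ has no free-product splitting; it is a graph of groups with cyclic edge groups whose vertices are of surface, rigid, or cyclic type, with $\bar{a}$ elliptic in a non-surface (rigid/edge) vertex. Let $\mathrm{Mod}(H_1)$ be the associated modular group, generated by Dehn twists along edges and by mapping-class/pseudo-Anosov elements on surface vertices; every element of $\mathrm{Mod}(H_1)$ fixes $\langle\bar{a}\rangle$ pointwise. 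Fixing a test sequence $(\phi_n)$ in $\mathrm{Mod}(H_1)$ in the sense of Sela and setting $\eta_n:=h\circ\phi_n$, we obtain a sequence of homomorphisms with $\eta_n(\bar{a})=h(\phi_n(\bar{a}))=\bar{b}$ whose generator-displacements $\max_i|\eta_n(s_i)|_{H_2}$ tend to infinity.

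I would then apply the Bestvina--Paulin construction to $(\eta_n)$: since $H_2$ is $\delta$-hyperbolic, after rescaling the word metric and passing to a subsequence the $\eta_n$ converge to a nontrivial stable action of $H_1$ on a real tree $T$ with cyclic arc stabilizers. The test sequence is arranged so that the stable kernel of this action is trivial, so the action is faithful, and freely indecomposability relative to $\langle\bar{a}\rangle$ rules out free-splitting directions. By the Rips structure theorem $T$ decomposes into axial, surface, and simplicial pieces; hyperbolicity of $H_2$ forbids axial pieces, as these would yield a noncyclic abelian subgroup. If there were no surface piece the action would be essentially simplicial, its vertex stabilizers would embed into $H_2$, and reassembling them would furnish an injective $h\in\mathrm{Hom}_{\bar{a}\to\bar{b}}(H_1,H_2)$ — contradicting our standing assumption. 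Hence $T$ has a surface component. The shortening argument then localizes the failure of injectivity on that surface vertex and, through the limiting data, produces a retraction $r\colon H_1\to R_0$ onto the subgroup $R_0$ generated by the non-surface vertex groups, sending each surface vertex group to a nonabelian image. Since $\bar{a}$ is elliptic in the non-surface part, $\langle\bar{a}\rangle\le R_0$, and $r$ exhibits $H_1$ as a hyperbolic floor over $R_0\supseteq\langle\bar{a}\rangle$, which is the second alternative.

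The main obstacle is the passage in the last paragraph: guaranteeing that the test sequence yields a \emph{faithful} (or sufficiently discriminating) limit action — this is precisely where the freely-indecomposable hypothesis and Sela's strict-resolution bookkeeping are essential — and then extracting the retraction $r$ with nonabelian images on the surface vertex groups, so that the output is \emph{exactly} a hyperbolic floor rather than some other cyclic splitting. By comparison, the finite-presentation and type-transfer steps that manufacture $h$ are routine bookkeeping.
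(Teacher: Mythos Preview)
The paper does not prove this lemma; it is quoted from \cite{PS12} and used as a black box. So there is no in-paper argument to compare against, only the original Perin--Sklinos one.

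Your outline has the right architecture, but the construction of the sequence $(\eta_n)$ is a genuine gap --- and you rightly flag it as ``the main obstacle'' without closing it. You extract a \emph{single} homomorphism $h$ from one existential consequence of the type equality and then set $\eta_n=h\circ\phi_n$ for $\phi_n\in\mathrm{Mod}(H_1)$. Since each $\phi_n$ is an automorphism, $\ker\eta_n=\phi_n^{-1}(\ker h)$; nothing forces the stable kernel of the limiting action to be trivial, and a non-faithful limit tree tells you nothing about $H_1$. (Also, ``test sequence in the sense of Sela'' is not a sequence in the modular group; that terminology refers to sequences of retractions down a tower.) The repair is to use the type hypothesis far more aggressively: for every finite $S\subset H_1\setminus\{1\}$ the formula
\[
\exists\bar y\Bigl(\bigwedge_{w\in R}w(\bar x,\bar y)=1\ \wedge\ \bigwedge_{g\in S}w_g(\bar x,\bar y)\neq 1\Bigr)
\]
lies in $tp^{H_1}(\bar a)=tp^{H_2}(\bar b)$, producing $h_S:H_1\to H_2$ with $h_S(\bar a)=\bar b$ injective on $S$. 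Enumerating $H_1$ gives a \emph{discriminating} family $(h_n)$, and it is these maps --- shortened by $\mathrm{Mod}(H_1)$ in the usual way --- that one feeds to Bestvina--Paulin. Then either the shortened maps have bounded length, hence there are finitely many and discrimination forces one to be injective, or they diverge and the shortening argument together with Perin's preretraction machinery yields the hyperbolic floor over a subgroup containing $\langle\bar a\rangle$. Your sentence ``if there were no surface piece \ldots\ reassembling them would furnish an injective $h$'' is also not how that branch resolves: the embedding comes from the bounded/finiteness alternative just described, not from gluing vertex stabilizers of a simplicial tree.
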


The next Lemma can be proved following Sela's ideas on proving the co-Hopf property of freely indecomposable torsion-free hyperbolic groups. Nevertheless, a proof has been given in \cite[Corollary 4.19]{Perin}.

\begin{lemma}[relative co-Hopf property]\label{CoHopf}
Let $H$ be a torsion-free hyperbolic group freely indecomposable with respect to a nontrivial subgroup $A$. Then any injective endomorphism of $H$ that fixes $A$ is surjective.  
\end{lemma}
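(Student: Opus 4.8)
The plan is to prove the relative co-Hopf property (Lemma \ref{CoHopf}) by adapting Sela's shortening argument to the relative setting, exactly as one proves the ordinary co-Hopf property for freely indecomposable torsion-free hyperbolic groups. So let $H$ be torsion-free hyperbolic, freely indecomposable with respect to a nontrivial subgroup $A$, and let $\phi\colon H\to H$ be an injective endomorphism fixing $A$ pointwise. Suppose toward a contradiction that $\phi$ is not surjective. First I would iterate $\phi$ to produce a strictly descending chain $H\supsetneq \phi(H)\supsetneq \phi^2(H)\supsetneq\cdots$ of $A$-containing subgroups; each $\phi^n(H)$ is again a torsion-free hyperbolic group (a retract, even a subgroup, of $H$) containing $A$, and since $\phi$ is injective each $\phi^n$ is an embedding $H\hookrightarrow H$ fixing $A$. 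The goal is to extract from this an infinite ``strictly decreasing'' sequence of homomorphisms $H\to H$ that are pairwise non-conjugate, in order to feed it into the shortening/equationally-Noetherian machinery and reach a contradiction.

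The key tool is the action of $H$ on its Cayley graph, viewed as a $\delta$-hyperbolic space, and the Bestvina--Paulin construction: from the sequence $(\phi^n)$, after precomposing with suitable automorphisms to make the maps ``short'', one obtains in the limit an isometric action of $H$ on a real tree $T$. Because $H$ is finitely generated and equationally Noetherian (limit groups, a fortiori hyperbolic groups, have this property), the stable kernel of the sequence $(\phi^n)$ gives a quotient which is a \emph{strict} limit group; the action on $T$ is stable, with the arc stabilizers being abelian or trivial. Now the freeness of $H$ with respect to $A$ — i.e. $H$ does not split as a nontrivial free product with $A$ inside a factor — forces the Rips machine to produce a splitting of $H$ as a graph of groups with \emph{abelian} or \emph{surface-type} pieces in which $A$ is elliptic, i.e. a genuine cyclic (or abelian) or surface splitting of $H$ relative to $A$. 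Applying the shortening argument (the modular group of such a splitting, fixing $A$) we contradict the assumption that each $\phi^n$ was chosen to be shortest in its class modulo $\mathrm{Mod}_A(H)$: the limit action would have to be trivial, meaning the sequence $(\phi^n)$ is eventually constant up to conjugacy by elements of $\mathrm{Mod}_A(H)$, which in turn (using the co-Hopf-type argument on the relevant vertex groups and induction on a complexity of the JSJ relative to $A$) forces $\phi$ to be onto.

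I would organize the write-up as: (1) reduce to showing the descending chain $\{\phi^n(H)\}$ stabilizes; (2) set up the Bestvina--Paulin limit tree $T$ from $(\phi^n)$ after shortening; (3) run the Rips machine and use indecomposability relative to $A$ to get a relative splitting of $H$ over abelian subgroups in which $A$ is elliptic; (4) apply the relative shortening argument to contradict minimality, concluding the $\phi^n$ are eventually conjugate by $\mathrm{Mod}_A$-elements, hence (by induction on the relative JSJ, using that vertex groups of lower complexity are co-Hopfian relative to incident edge groups and $A$) that $\phi$ is surjective. The main obstacle is step (3)--(4): controlling the limit tree and ensuring that the splitting produced is genuinely \emph{relative} to $A$ (so that the modular automorphisms we shorten by all fix $A$), and then packaging the induction on complexity correctly — this is precisely the technical heart of Sela's co-Hopf argument, and in the relative case it is carried out in detail by Perin. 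For this reason I would not reproduce the argument but instead cite \cite[Corollary 4.19]{Perin}, remarking only that the proof is the relative analogue of Sela's proof of the co-Hopf property of freely indecomposable torsion-free hyperbolic groups, with $A$ kept elliptic throughout.
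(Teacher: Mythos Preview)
Your proposal is correct and aligns with the paper's treatment: the paper does not prove this lemma but simply remarks that it follows Sela's co-Hopf argument and cites \cite[Corollary 4.19]{Perin}, which is exactly where you land after your (more detailed) sketch. A couple of minor wording slips in your outline---$\phi^n(H)$ is not a retract of $H$, and the ``a fortiori'' between limit groups and hyperbolic groups is garbled---do not affect the conclusion, since you ultimately defer to Perin's reference just as the paper does.
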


\noindent{\em Proof of Proposition \ref{MinimalTowers}:} We may assume that $\bar{a}$ is non trivial, as the minimal subtower that contains the trivial element is the trivial subgroup. 

Since $H_1$ is a minimal subtower that contains $\bar{a}$ it must be freely indecomposable with respect to $\langle\bar{a}\rangle$. Therefore, Lemma \ref{retractionHypFloor} applies. Since 
$H_1$ is minimal (relative to $\bar{a}$) it cannot have a hyperbolic floor structure over a subgroup that contains $\bar{a}$, hence there is an injective morphism $h_1:H_1\rightarrow H_2$ 
sending $\bar{a}$ to $\bar{b}$. Similarly, using the minimality of $H_2$, we can find an injective morphism $h_2:H_2\rightarrow H_1$ sending $\bar{b}$ to $\bar{a}$. Their composition 
$h_2\circ h_1$ is an injective endomorphism of $H_1$ that fixes $\bar{a}$, thus, by Lemma \ref{CoHopf}, it is surjective. In particular, the injective morphism $h_1$ is also surjective.\qed
\ \\ \\
We can now prove that the class $\mathcal F_e$  forms an $e$-Fra\"{i}ss\'{e} class.

The $e$-$HP$ condition is immediate: a finitely generated elementary subgroup of an elementary free group is an elementary free group itself. 

The $e$-$JEP$ condition follows rather easily from the characterization of finitely generated models of the theory of the free group, together with Theorems \ref{TowerElem} and \ref{FreeProductFreeGroup}. 

\begin{lemma}\label{FreeProduct}
Suppose $H_1, H_2$ are finitely generated models of the theory of the free group. Then $H_1*H_2$ is a finitely generated model of the theory of the free group. 
Moreover, each $H_i$ for $i\leq 2$ is an elementary subgroup of $H_1*H_2$. 
\end{lemma}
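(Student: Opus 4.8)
\textbf{Proof proposal for Lemma \ref{FreeProduct}.}

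The plan is to prove both assertions by exhibiting $H_1 * H_2$ as a hyperbolic tower over each factor, and then invoking the already-established equivalence (\emph{finitely generated group elementarily equivalent to a nonabelian free group iff it is a nonabelian hyperbolic tower over $F$}) together with Theorem \ref{TowerElem}. First I would observe that $H_1 * H_2$ is finitely generated since each $H_i$ is; it is nonabelian since each $H_i$ is nonabelian. So the whole content is showing that $H_1 * H_2$ is a hyperbolic tower over $F$ and that each $H_i$ is an elementary subgroup.

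For the tower structure: by hypothesis each $H_i$ is a hyperbolic tower over $F$, i.e. $H_i = G^{m_i}_i > \cdots > G^0_i = F$ with each step a hyperbolic floor or a free product with a finite-rank free group. I would first build a hyperbolic tower over $F$ whose top group is $H_1 * H_2$: start from $H_1$ (a tower over $F$ by hypothesis), and then reuse the tower decomposition of $H_2$ over $F$ but with $F$ replaced by $H_1$ at the bottom — concretely, if $H_2$ is obtained from $F$ by the sequence of floor-and-free-product moves $G^0_2 = F, G^1_2, \dots, G^{m_2}_2 = H_2$, then performing the ``same'' moves starting from $H_1$ (the first move being the free product $H_1 = H_1 * \{1\} \to H_1 * (\text{whatever free factor was added to } F \text{ at step } 1)$, and each later floor move being the same surface-type vertex groups glued on) produces a hyperbolic tower $H_1 \to \cdots \to H_1 * H_2$ over $H_1$, where $H_1$ sits inside one of the rigid vertex groups at the bottom. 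Composing with the tower $F < \cdots < H_1$ gives a hyperbolic tower $H_1 * H_2$ over $F$, with $F$ in one of the bottom rigid vertex groups. This is essentially the construction underlying Lemma \ref{AmalgamatedTowers} specialized to $A$ trivial (note $H_1 * H_2 = H_1 *_{\{1\}} H_2$), so I would actually prefer to \emph{cite Lemma \ref{AmalgamatedTowers} directly}: taking $A = \{1\}$, it says $H_1 * H_2$ is a hyperbolic tower over each $H_i$; then since $H_i$ is itself a hyperbolic tower over $F$, transitivity of the tower relation (concatenating the two sequences of floors) gives that $H_1 * H_2$ is a hyperbolic tower over $F$. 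By the characterization theorem, $H_1 * H_2$ is thus a finitely generated model of the theory of the free group.

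For the ``moreover'' clause: by Lemma \ref{AmalgamatedTowers} with $A = \{1\}$, $H_1 * H_2$ is a hyperbolic tower over $H_i$; since $H_i$ is nonabelian and $H_1 * H_2$ is torsion-free hyperbolic (being elementary free, hence a limit group, and a hyperbolic tower over a hyperbolic group is hyperbolic), Theorem \ref{TowerElem} applies and yields $H_i \prec_e H_1 * H_2$. The main obstacle I anticipate is the bookkeeping needed to verify the hypotheses of Lemma \ref{AmalgamatedTowers} and of the tower-transitivity step — in particular checking that in the concatenated tower $F < \cdots < H_i < \cdots < H_1 * H_2$ the subgroup $F$ indeed lands inside one of the rigid (non-surface) vertex groups at each stage, as required by clause (i) of the definition of hyperbolic tower, and that $H_1 * H_2$ remains torsion-free hyperbolic throughout so that Theorem \ref{TowerElem} is applicable. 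These are exactly the points that Lemma \ref{AmalgamatedTowers} is designed to package, so modulo that lemma the argument is short; if one wanted a self-contained treatment one would instead spell out the explicit floor-by-floor construction sketched above, where the only real check is that a free-product move and a floor move over $H_1$ are still legitimate floor/free-product moves after the bottom group has been enlarged from $F$ to $H_1$.
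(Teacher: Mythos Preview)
Your proposal is correct and follows essentially the same route the paper indicates: use the tower characterization of finitely generated elementary free groups to see that $H_1*H_2$ is a hyperbolic tower over each $H_i$ (and hence over $F$), and then apply Theorem~\ref{TowerElem} to conclude $H_i\prec_e H_1*H_2$. The only cosmetic difference is that the paper points to the remark (justified by Theorem~\ref{FreeProductFreeGroup}) that a free factor of a hyperbolic tower is a subtower, whereas you obtain the same conclusion by specializing Lemma~\ref{AmalgamatedTowers} to $A=\{1\}$; both packagings yield ``$H_1*H_2$ is a hyperbolic tower over $H_i$'' and the rest of the argument is identical.
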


We finally prove the strong $e$-$AP$ condition for the class of finitely generated elementary free groups. 

\begin{lemma}\label{strongeAP}
Let $H_0, H_1, H_2\in \mathcal F_e$.  Let $\bar{a}$ be a tuple from $H_0$ and $f_i:\bar{a}\rightarrow H_i$ for $i= 1, 2$ be partial elementary embeddings. Then there exist 
a  group $D\in \mathcal F_e$ and elementary embeddings $g_i:H_i\rightarrow D$ for $i\leq 2$, such that $g_1\circ f_1(\bar{a})=g_2\circ f_2(\bar{a})$.  
\end{lemma}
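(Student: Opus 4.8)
The plan is to reduce the strong $e$-AP to the uniqueness of minimal hyperbolic subtowers (Proposition \ref{MinimalTowers}) combined with the amalgamation property of hyperbolic towers over a common subgroup (Lemma \ref{AmalgamatedTowers}). First I would pass to minimal subtowers: let $T_i\leq H_i$ be a minimal hyperbolic subtower containing $f_i(\bar a)$ for $i=1,2$. By Theorem \ref{TowerElem} (applied along the hyperbolic floors witnessing that $H_i$ is a tower over $T_i$, together with Lemma \ref{FreeProduct} to handle the free-product floors) each $T_i$ is an elementary subgroup of $H_i$; in particular $T_i\in\mathcal F_e$ and $\mathrm{tp}^{T_i}(f_i(\bar a))=\mathrm{tp}^{H_i}(f_i(\bar a))$. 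Since $f_1,f_2$ are partial elementary maps of $\bar a$, we get $\mathrm{tp}^{T_1}(f_1(\bar a))=\mathrm{tp}^{H_0}(\bar a)=\mathrm{tp}^{T_2}(f_2(\bar a))$, so Proposition \ref{MinimalTowers} (applied inside a common tower containing both — or more directly, its proof, which only uses Lemmas \ref{retractionHypFloor} and \ref{CoHopf} and needs no ambient tower) yields an isomorphism $h:T_1\to T_2$ with $h(f_1(\bar a))=f_2(\bar a)$.

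Next I would perform the amalgamation. Identify $T_1$ with $T_2$ via $h$, call this common subgroup $A$, so that $f_1(\bar a)$ and $f_2(\bar a)$ become a single tuple $\bar c\in A$. Now $H_1$ and $H_2$ are both hyperbolic towers over $A$ (by the Convention all towers are over $F$, and $A$ being elementarily embedded in $H_i$ means $H_i$ is a tower over $A$, with $F$ sitting inside $A$). Form $D:=H_1*_A H_2$. By Lemma \ref{AmalgamatedTowers}, $D$ is a hyperbolic tower over $H_i$ for each $i\leq 2$; in particular $D$ is a finitely generated nonabelian hyperbolic tower over $F$, hence $D\in\mathcal F_e$. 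By Theorem \ref{TowerElem} the inclusions $g_i:H_i\hookrightarrow D$ are elementary embeddings. Finally $g_1\circ f_1(\bar a)=\bar c=g_2\circ f_2(\bar a)$ inside $D$, which is exactly what strong $e$-AP demands.

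A couple of technical points I would need to check carefully. One is that when a minimal subtower $T_i$ is cyclic, so $f_i(\bar a)$ is (conjugate into) an infinite cyclic factor, the map $h$ and the hypothesis $\mathrm{tp}^{T_1}(f_1(\bar a))=\mathrm{tp}^{T_2}(f_2(\bar a))$ still force $T_1\cong T_2$ compatibly with the tuples; here one should also make sure $\bar a$ itself is nontrivial and nonabelian as a tuple (if $\bar a$ generates an abelian subgroup the minimal subtower degenerates, and the argument must fall back on matching cyclic subgroups directly, or on the free-product step of Lemma \ref{AmalgamatedTowers}). The other is the identification of $F$ across the two towers: to apply Lemma \ref{AmalgamatedTowers} over a common $A$ we need the copies of $F$ inside $H_1$ and $H_2$ to be matched up by $h$ — this is automatic because both towers are towers over the \emph{same} $F$ by the standing Convention, and $A$ contains (a conjugate of) $F$ in each.

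The step I expect to be the main obstacle is verifying that the minimal subtower $T_i$ is genuinely an elementary subgroup of $H_i$ and that Proposition \ref{MinimalTowers} applies without first embedding $T_1$ and $T_2$ into a common tower. Proposition \ref{MinimalTowers} as stated assumes $H_1,H_2$ are subtowers of one fixed $H$; to use it for subtowers of two different groups $H_1$ and $H_2$ I would either re-run its proof verbatim (it only invokes the relative co-Hopf property and Lemma \ref{retractionHypFloor}, both of which are intrinsic) or first amalgamate $H_1$ and $H_2$ over the trivial group — but that presupposes the matching of the distinguished tuples, which is circular. So the cleanest route is to observe that the proof of Proposition \ref{MinimalTowers} never actually used the ambient $H$ beyond supplying freely-indecomposable minimal subtowers with a common type, and quote it in that form.
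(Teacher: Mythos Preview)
Your overall strategy --- pass to minimal hyperbolic subtowers $T_i$ containing $f_i(\bar a)$, use Proposition \ref{MinimalTowers} to identify them, then amalgamate via Lemma \ref{AmalgamatedTowers} --- is exactly the paper's approach. Two points where the paper is sharper than your sketch:

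\textbf{The ambient tower for Proposition \ref{MinimalTowers}.} You worry about applying the proposition when $T_1,T_2$ sit in different groups, and propose re-running its proof. The paper simply takes $H=H_1*H_2$: by Lemma \ref{FreeProduct} this is an elementary free group containing both $H_i$ as elementary subgroups, and each $T_i$ is then a minimal subtower of this common $H$ containing the relevant tuple. No need to unpack the proof.

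\textbf{The cyclic case.} This is where your proposal has a genuine gap rather than just a loose end. When $T_1$ is cyclic, Theorem \ref{TowerElem} does \emph{not} give $T_1\prec_e H_1$ (the theorem requires a nonabelian subgroup), so your equation $\mathrm{tp}^{T_1}(f_1(\bar a))=\mathrm{tp}^{H_1}(f_1(\bar a))$ is unjustified, and with it your route to the isomorphism $h$. The paper handles this case separately: writing $\bar b=f_1(\bar a)$, $\bar c=f_2(\bar a)$, one has $\mathrm{tp}^{T_1*\Z}(\bar b)=\mathrm{tp}^{H_1*\Z}(\bar b)=\mathrm{tp}^{H_1}(\bar b)$ (the second equality because $H_1\prec_e H_1*\Z$, the first by the second clause of the hyperbolic floor definition together with Theorem \ref{TowerElem}). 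If $T_2$ were nonabelian one would get $\mathrm{tp}^{T_2}(\bar c)=\mathrm{tp}^{T_1*\Z}(\bar b)$, and minimality of $T_2$ plus Lemma \ref{retractionHypFloor} forces an embedding $T_2\hookrightarrow T_1*\Z$ sending $\bar c$ to $\bar b$; since $T_2$ is freely indecomposable relative to $\bar c$ this lands in $T_1$, contradicting nonabelianness. Hence $T_2$ is cyclic too, and then $\mathrm{tp}^{T_1*\Z}(\bar b)=\mathrm{tp}^{T_2*\Z}(\bar c)$ (both equal to the common $H_i$-type) lets one read off the desired isomorphism $h:T_1\to T_2$ with $h(\bar b)=\bar c$. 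Your remark that ``the argument must fall back on matching cyclic subgroups directly'' is pointing at the right place, but the actual matching requires this detour through $T_i*\Z$.

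Your concern about aligning the copies of $F$ across the two towers is not needed: Lemma \ref{AmalgamatedTowers} only asks that $H_1,H_2$ be towers over the common $A$, and once $D=H_1*_A H_2$ is a tower over (say) $H_1$, it is automatically a tower over whatever copy of $F$ sits at the bottom of $H_1$.
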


\begin{proof}
We may assume that $\bar{a}$ is nontrivial. The hypotheses imply that there are $\bar{b}\in H_1$ and $\bar{c}\in H_2$  such that $tp^{H_1}(\bar{b})=tp^{H_2}(\bar{c})=tp^{H_0}(\bar{a})$. 
We take cases according to whether the smallest subtower $T_1$ of $H_1$ that contains $\bar{a}$ is abelian or not. 

Suppose $T_1$ is abelian, hence cyclic. We prove that $T_2$ the smallest subtower that contains $\bar{c}$ is cyclic as well. Suppose, for the sake of contradiction, not, then  Theorem \ref{TowerElem} implies 
$tp^{H_1}(\bar{b})=tp^{T_2}(\bar{c})$. Moreover, $tp^{H_1*\Z}(\bar{b})=tp^{T_1*\Z}(\bar{b})=tp^{T_2}(\bar{c})$ and since $T_2$ is minimal it must embed into $T_1*\Z$, by an embedding that sends $\bar{c}$ to $\bar{b}$. 
Since, $T_2$ is freely indecomposable with respect to $\bar{c}$, it must embed in $T_1$, hence it is cyclic, a contradiction. Next, since both $T_1$, $T_2$ are cyclic and $tp^{T_1*\Z}(\bar{b})=tp^{T_2*\Z}(\bar{c})$, we can find an isomorphism 
$h:T_1\rightarrow T_2$ that sends $\bar{b}$ to $\bar{c}$.

Suppose $T_1$ (and consequently $T_2$) is nonabelian. Hence, Theorem \ref{TowerElem} implies $tp^{T_1}(\bar{b})=tp^{T_2}(\bar{c})$. By Proposition \ref{MinimalTowers}, applied to $H=H_1*H_2$, 
we obtain an isomorphism $h:T_1\rightarrow T_2$ sending $\bar{b}$ to $\bar{c}$. 

Finally, in both cases, by Lemma \ref{AmalgamatedTowers}, the amalgamated free product $H_1*_{T_1}H_2$ (using $h$ for the amalgamation) is a finitely generated elementary free group. Hence is the group $D$ we wanted. 
\end{proof}

We thus obtain a countable elementary free group with interesting properties. 

\begin{theorem}\label{ElementaryFreeFraisse}
There exists a countable group $G_e$ with the following properties: 
\begin{itemize} 
 \item the $e$-$age$ of $G_e$ is the class of finitely generated elementary free groups;
 \item the group $G_e$ is homogeneous; 
 \item the group $G_e$ is the union of an elementary chain of finitely generated elementary groups.
\end{itemize}
Moreover, any countable group with the above properties is isomorphic to $G_e$.
\end{theorem}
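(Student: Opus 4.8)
The plan is to invoke the general machinery for strong $e$-Fra\"{i}ss\'{e} classes (the theorem following Lemma \ref{HomogeneousExtension}) once we have verified that $\mathcal{F}_e$ satisfies all the clauses of Definition \ref{e-Fraisse}. Three of the four conditions are essentially routine and have already been isolated in the excerpt: (IP) is trivial, (e-HP) is the remark that a finitely generated elementary subgroup of an elementary free group is again an elementary free group, and (e-JEP) follows from Lemma \ref{FreeProduct} together with the tower characterization of $\mathcal{F}_e$ (Theorems \ref{TowerElem} and \ref{FreeProductFreeGroup}): given $H_1,H_2\in\mathcal{F}_e$, the free product $H_1*H_2$ lies in $\mathcal{F}_e$ and each $H_i$ is an elementary subgroup of it. The substantive condition, strong e-AP, is exactly Lemma \ref{strongeAP}, which is proved just above: given partial elementary maps $f_i\colon\bar a\to H_i$, one produces an isomorphism $h\colon T_1\to T_2$ between the minimal hyperbolic subtowers containing the relevant images of $\bar a$ (using Proposition \ref{MinimalTowers} in the nonabelian case and a direct argument with $T_i*\Z$ in the cyclic case), and then $H_1*_{T_1}H_2$, amalgamated via $h$, is the required $D\in\mathcal{F}_e$ by Lemma \ref{AmalgamatedTowers}, with the $g_i$ the canonical embeddings, which are elementary because each $H_i$ is a subtower of $H_1*_{T_1}H_2$ and Theorem \ref{TowerElem} applies.

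Given these four verifications, the abstract theorem on strong $e$-Fra\"{i}ss\'{e} classes yields a countable $\mathcal{L}$-structure $G_e$ whose $e$-age is $\mathcal{F}_e$, which is the union of an elementary chain of members of $\mathcal{F}_e$, and which has the strong $e$-Extension Property; by Lemma \ref{HomogeneousExtension}, since $G_e$ is a union of an elementary chain of finitely generated structures, the strong $e$-Extension Property is equivalent to homogeneity, so $G_e$ is homogeneous. The uniqueness clause is the uniqueness part of the same abstract theorem, which in turn is a standard back-and-forth argument between two structures with the listed properties.

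One point that needs a remark rather than a computation is that $\mathcal{F}_e$ is countable up to isomorphism: this holds because elementary free groups are finitely generated limit groups, the class of limit groups is countable (they are finitely presented, as recalled in Section 3), and so $\mathcal{F}_e$ is a subclass of a countable class. The step I expect to carry the most weight — and it is already done in the excerpt as Lemma \ref{strongeAP} — is the strong e-AP verification, and within it the delicate piece is ruling out, in the case where $T_1$ is cyclic, that the minimal subtower $T_2$ containing $\bar c$ is nonabelian: this is handled by passing to $T_1*\Z$ and using minimality of $T_2$ together with the fact that $T_2$ is freely indecomposable with respect to $\bar c$. Everything downstream of Lemma \ref{strongeAP} is then bookkeeping in the general Fra\"{i}ss\'{e} framework.
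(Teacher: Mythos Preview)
Your proposal is correct and follows exactly the paper's approach: verify that $\mathcal{F}_e$ satisfies the clauses of Definition \ref{e-Fraisse} (IP trivially, e-HP by the obvious remark, e-JEP via Lemma \ref{FreeProduct}, and strong e-AP via Lemma \ref{strongeAP}), then invoke the general theorem on strong $e$-Fra\"{i}ss\'{e} classes to produce $G_e$ with the stated properties and uniqueness. The paper leaves Theorem \ref{ElementaryFreeFraisse} as the immediate consequence of these verifications, and your write-up simply makes explicit the bookkeeping (including the countability of $\mathcal{F}_e$ and the appeal to Lemma \ref{HomogeneousExtension} for homogeneity) that the paper takes for granted.
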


Finally as a by-product of the already proved theorems in this section we obtain. 

\begin{theorem}
The class of finite rank nonabelian free groups is a strong elementary Fra\"{i}ss\'{e} class. In particular $\F_{\omega}$ is homogeneous.  
\end{theorem}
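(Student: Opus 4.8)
The plan is to deduce this from the machinery already developed for $\mathcal{F}_e$, specializing everything to finite rank nonabelian free groups. The key point is that the class $\mathcal{K} := \{\F_n : 2 \le n < \omega\}$ sits inside $\mathcal{F}_e$ and is closed under the constructions we used there, so we only need to re-examine each clause of Definition \ref{e-Fraisse} and Lemma \ref{strongeAP} to check that the witness groups can be taken to be free of finite rank rather than merely elementary free. First I would verify $e$-HP: by Perin's theorem (Theorem \ref{ElemTower}) and the description of finitely generated elementary free groups as hyperbolic towers over $F$, an elementary subgroup of $\F_n$ is a hyperbolic tower over $F$; but an elementary subgroup of a free group is free (this follows, e.g., from Theorem \ref{TowerElem} together with the fact that a hyperbolic subtower which is a retract of a free group and has the right Euler-characteristic-type constraints collapses — more cheaply, any subgroup of a free group is free by Nielsen–Schreier, and finite generation plus being elementarily free forces nonabelian, so it is a finite rank nonabelian free group). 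So $\mathcal{K}$ is $e$-HP-closed.

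Next, for $e$-JEP and strong $e$-AP, I would observe that all the amalgamation witnesses produced in Lemma \ref{FreeProduct} and Lemma \ref{strongeAP} are built by free products and amalgamated free products over hyperbolic subtowers; so it suffices to note that these operations, applied to finite rank free groups, again yield finite rank free groups. For $e$-JEP, $\F_m * \F_n \cong \F_{m+n}$, which is free of finite rank, and each factor is elementarily embedded by Lemma \ref{FreeProduct}. For strong $e$-AP, start with $H_0, H_1, H_2 \in \mathcal{K}$, a tuple $\bar{a}$ from $H_0$, and partial elementary maps $f_i : \bar{a} \to H_i$. Running the proof of Lemma \ref{strongeAP} verbatim, one extracts the minimal subtowers $T_1 \le H_1$, $T_2 \le H_2$ containing the images of $\bar a$; since $H_i$ is free, $T_i$ is free (again Nielsen–Schreier), of finite rank, and either cyclic or nonabelian free; Proposition \ref{MinimalTowers} (in the nonabelian case) or the explicit argument with $\Z$-factors (in the cyclic case) gives an isomorphism $h : T_1 \to T_2$ matching the tuples. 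Then $D := H_1 *_{T_1} H_2$ is, by the structure of a free group as a graph of free groups with free edge groups — equivalently by the Kurosh subgroup theorem applied in reverse, or simply because an amalgamated product of free groups over a free factor of each is free — a finite rank nonabelian free group, and $g_i : H_i \hookrightarrow D$ are elementary by Lemma \ref{AmalgamatedTowers} together with Theorem \ref{TowerElem}. Hence $\mathcal{K}$ is a strong $e$-Fra\"iss\'e class.

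The homogeneity of $\F_\omega$ then follows the way Theorem \ref{ElementaryFreeFraisse} follows from its class: the strong $e$-Fra\"iss\'e limit of $\mathcal{K}$ is a countable group which is the union of an elementary chain $\F_2 \prec_e \F_3 \prec_e \cdots$, whose $e$-age is $\mathcal{K}$, and which is homogeneous by the theorem on strong $e$-Fra\"iss\'e classes (via Lemma \ref{HomogeneousExtension}). But that chain is exactly the natural chain whose union is $\F_\omega$, and the limit is unique up to isomorphism, so the limit \emph{is} $\F_\omega$; thus $\F_\omega$ is homogeneous.

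The main obstacle I anticipate is not in any single step but in being careful that the amalgam $H_1 *_{T_1} H_2$ genuinely lands in $\mathcal{K}$, i.e.\ is \emph{free of finite rank}, not merely elementarily free: Lemma \ref{AmalgamatedTowers} only tells us it is a hyperbolic tower over each $H_i$, which a priori is just elementarily free. One must separately argue freeness — the cleanest route is that $T_1$ is a free factor of neither $H_1$ nor $H_2$ in general, so one cannot invoke Grushko directly; instead one uses that a hyperbolic tower over a free group which is itself $\forall$-equivalent to a free group and has the given graph-of-groups structure with free vertex and edge groups is free, or one appeals to the known fact (Kharlampovich–Myasnikov) that an amalgam of free groups over a \emph{subtower} that is a retract stays free. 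Pinning down precisely which cited result delivers this freeness is the delicate part; everything else is a routine transcription of the $\mathcal{F}_e$ arguments with ``elementary free'' replaced by ``finite rank free''.
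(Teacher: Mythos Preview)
Your overall strategy---rerunning the proof of Lemma~\ref{strongeAP} inside the class of finite rank free groups---is exactly the ``by-product'' the paper has in mind, and the verification of (IP), ($e$-HP), ($e$-JEP) is fine. The problem is in your handling of strong $e$-AP. You write that ``$T_1$ is a free factor of neither $H_1$ nor $H_2$ in general'' and then cast about for ad hoc reasons why the amalgam should nonetheless be free. This is backwards: when $H_i$ is a finite rank free group, the minimal subtower $T_i$ \emph{is} a free factor. In the nonabelian case $T_i$ is an elementary subgroup of $H_i$ by Theorem~\ref{TowerElem}, and elementary subgroups of finitely generated free groups are precisely the nonabelian free factors (this is one of the main outputs of \cite{Perin}, \cite{PS12}; alternatively, the tower retractions exhibit $T_i$ as a retract of $H_i$, and retracts of finitely generated free groups are free factors). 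In the cyclic case one applies the same reasoning to $T_i\ast\Z\prec_e H_i\ast\Z$ to see that the generator of $T_i$ is primitive. Once $T_i$ is a free factor of each $H_i$, the amalgam $H_1\ast_{T_1}H_2$ is transparently free of finite rank and your ``main obstacle'' evaporates.

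There is a second, independent gap in your final paragraph. You assert that the Fra\"iss\'e construction produces ``exactly the natural chain $\F_2\prec_e\F_3\prec_e\cdots$'', but it does not: it produces some elementary chain determined by a bookkeeping of amalgamations, and the uniqueness clause only applies to structures already known to enjoy all three properties---including homogeneity, which is what you are trying to conclude. To identify the limit with $\F_\omega$ you must argue separately that the union of \emph{any} elementary chain from $\mathcal{K}$ with unbounded ranks is free of countably infinite rank; this again follows from ``elementary subgroup $=$ free factor'', since a union along a chain of free-factor inclusions is free. The paper also records a shorter alternative that sidesteps all of this: by \cite{PS12} and \cite{OH11} each $\F_n$ is already homogeneous, and from this the strong $e$-Extension Property for $\F_\omega$---hence its homogeneity via Lemma~\ref{HomogeneousExtension}---follows in a couple of lines.
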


This last result also follows from the facts that finite rank nonabelian free groups are homogeneous (see \cite{PS12}, \cite{OH11}) and $\F_{\omega}$ is the union of their elementary chain.

\section{Further remarks and open questions}

\begin{lemma}
 There are countable elementary free groups which are not limits of $\forall$-chains of finitely generated elementary free groups.
\end{lemma}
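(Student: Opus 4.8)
The plan is to exhibit a countable elementary free group that fails to be a union of a $\forall$-chain (equivalently, an elementary chain) of finitely generated elementary free groups, by making it ``too big locally'' in a way no finitely generated elementary free group can witness. The natural candidate is a group built as a union of an $\omega$-chain of finitely generated elementary free groups in which the chain is \emph{not} elementary, or more precisely, a countable elementary free group $G$ possessing a finitely generated subgroup $A$ which is elementary free but which is \emph{not} elementary in $G$, in such a strong way that $A$ is not contained in any finitely generated elementary subgroup of $G$. If every element of $G$ lay in some finitely generated elementary subgroup, those subgroups could be arranged (by directedness, using Lemma~\ref{FreeProduct} and amalgamation of towers, Lemma~\ref{AmalgamatedTowers}) into an elementary chain with union $G$; so it suffices to produce a countable elementary free $G$ with an element $g$ lying in no finitely generated elementary subgroup.

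First I would fix a nonabelian free group $\F=F(x,y)$ and build $G$ as an infinite hyperbolic tower over $\F$ of a specific shape: an infinite ascending chain
\[
\F = K_0 < K_1 < K_2 < \cdots, \qquad G = \bigcup_n K_n,
\]
where each $K_{n+1}$ is a hyperbolic floor over $K_n$ obtained by gluing on a single surface-type vertex (say a once-punctured torus or a low-Euler-characteristic surface) with a retraction $K_{n+1}\to K_n$. Each $K_n$ is then a hyperbolic tower over $\F$, hence a finitely generated elementary free group, and $\F\prec_e K_n$ for all $n$ by Theorem~\ref{TowerElem}; so each inclusion $K_n\prec_e K_{n+1}$ is elementary and $G$ \emph{is} the union of an elementary chain of finitely generated elementary free groups — which is the wrong conclusion. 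The real construction must therefore go the other way: I would build a countable elementary free $G$ \emph{not} presented this way and argue no such presentation exists. The cleanest route is to take $G$ to be a hyperbolic tower over $\F$ of \emph{infinite rank at the bottom} in disguise — e.g., realize inside $G$ an element $g$ that is ``infinitely deep'' in every tower decomposition. Concretely, choose $G$ so that it contains a copy of $\F_\omega=\bigcup_n \F_n$ (the countable-rank free group) as the base of a tower, and pick $g$ to be a product involving infinitely many free generators; then any finitely generated subgroup $A\ni g$ can only involve finitely many of these generators in a bounded way, and the retractions furnished by any elementary embedding $A\prec_e G$ would have to be compatible with $g$, forcing $A$ to ``see'' the whole $\F_\omega$ — impossible for a finitely generated group.

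The key steps, in order: (1) specify $G$ as a concrete countable elementary free group (an explicit infinite hyperbolic tower over $\F$, or over $\F_\omega$), and record that $G$ is elementary free; (2) suppose toward a contradiction that $G=\bigcup_n A_n$ with $A_1\prec_e A_2\prec_e\cdots$ finitely generated and elementary free and each $A_n\prec_e G$; (3) pick a suitable element $g\in G$ (chosen, via the infinite combinatorics of the tower, to be non-realizable in any proper finitely generated slice) and let $A_n$ be a term of the chain containing $g$; (4) use Theorem~\ref{TowerElem}/\ref{ElemTower} (Perin) to conclude $G$ is a hyperbolic tower over $A_n$, and combine with the relative co-Hopf property (Lemma~\ref{CoHopf}) and the structure of $g$ to derive a contradiction — $A_n$ would have to contain data of unbounded complexity. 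An alternative for step (3)–(4), avoiding delicate tower surgery: use that $G$, being a union of an elementary chain of finitely generated structures, would be \emph{homogeneous} by Lemma~\ref{HomogeneousExtension}, and then engineer $G$ to fail homogeneity (two tuples of the same type with no automorphism between them) — a failure of homogeneity is arguably easier to arrange directly in an infinite tower than to analyze every finitely generated slice.

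The main obstacle I expect is step (3): pinning down the precise element $g$ (or the precise pair of tuples witnessing non-homogeneity) and proving rigorously that it cannot be absorbed into any finitely generated elementary subgroup. This requires controlling all hyperbolic tower decompositions of $G$ simultaneously and showing that the ``depth'' of $g$ in the tower is genuinely infinite — i.e., that no finitely generated hyperbolic subtower contains $g$. This is where the descending chain condition for limit groups (which gives minimal subtowers, used in Proposition~\ref{MinimalTowers}) must be turned on its head: I need $G$ arranged so that the minimal subtower containing $g$ \emph{is all of $G$}, even though $G$ is not finitely generated. Making such a $G$ while keeping it elementary free — so that its finitely generated subgroups are still limit groups and it is $\forall$-equivalent to $\F$ — is the delicate part, and I would handle it by an explicit inductive construction interleaving surface floors so that each new generator of the base $\F_\omega$ becomes ``essential'' at some finite stage, preventing any finitely generated elementary subgroup from being cofinal.
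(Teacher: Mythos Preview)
Your overall strategy --- find a countable elementary free group $G$ containing an element $g$ that lies in no finitely generated elementary (or $\forall$-) subgroup --- is exactly the right target, but you are working far too hard to produce $g$, and the hard part you flag (step~(3)) is never actually carried out. The paper's proof bypasses all tower surgery with a one-line observation: for each set $X$ of primes, the partial type
\[
\Gamma_X \;=\; \{\exists y\,(x=y^p)\mid p\in X\}\;\cup\;\{\forall y\,(x\neq y^q)\mid q\in P\smallsetminus X\}
\]
is consistent with the theory of the free group, and distinct $X$'s give pairwise contradictory types. Realize $\Gamma_X$ for any \emph{infinite} $X$ in a countable elementary free group $G$. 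The realizing element $g$ is a $p$-th power in $G$ for infinitely many primes $p$; but in any finitely generated elementary free group (a limit group) every nontrivial element has a unique root, hence is a $p$-th power for only finitely many $p$. Since a $\forall$-substructure is existentially closed, $g$ cannot sit inside any finitely generated $\forall$-subgroup of $G$, and so $G$ is not the union of a $\forall$-chain of such groups. That is the whole proof.

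Two further points. First, your tower constructions over $\F_\omega$ or with ``infinitely deep'' elements are attempting to manufacture, by hand, exactly the phenomenon that the divisibility types $\Gamma_X$ give for free via compactness; there is no need to control all hyperbolic tower decompositions of $G$. Second, your alternative route through homogeneity misreads Lemma~\ref{HomogeneousExtension}: that lemma says that \emph{given} $\mathcal{M}$ is a union of an elementary chain of finitely generated structures, homogeneity is equivalent to the strong $e$-Extension Property. It does \emph{not} assert that every such union is homogeneous, so exhibiting a non-homogeneous $G$ would not by itself show $G$ fails to be such a union.
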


\begin{proof}
The result follows by the following observation in \cite{SklinosThesis}. Let $P$ be the set of primes. For any $X\subseteq P$, the following set 
of formulas $\Gamma_X:=\{ \exists y(x=y^p) \ | \ p\in X\}\cup \{ \forall y (x\neq y^q) \ | \ q\in P\setminus X\}$ is consistent. Moreover, 
for any two $X, Y \subseteq P$ with $X\neq Y$ the union $\Gamma_X\cup\Gamma_Y$ is inconsistent. 

\end{proof}

\noindent {\bf Questions:} 
\begin{itemize}
 \item Do finitely generated elementary free groups (or finite rank nonabelian free groups) form a (strong) $\forall$-Fra\"{i}ss\'{e} class ?
 \item Does there exist a countable homogeneous elementary free group whose elementary age is the class of all finitely generated elementary free groups and 
 is not isomorphic to the strong $e$-Fra\"{i}ss\'{e} limit of this class ?
 \item Do the nonabelian limit groups form a strong $\forall$-Fra\"{i}ss\'{e} class ? 
 \item Are all countable elementary free groups obtained as the union of a chain of finitely generated elementary free groups ?
\end{itemize}

We would like to thank the referee for the important comments which  helped improving the quality of the paper.

\end{document}